\UseRawInputEncoding

\documentclass[11pt]{amsart}
\usepackage{amsfonts}
\usepackage{amsmath}
\usepackage{amssymb,latexsym}
\usepackage[mathcal]{eucal}
\usepackage[pdftex,bookmarks,colorlinks,breaklinks]{hyperref}

\input xy
\xyoption{all}

\newcommand{\Z}{\mathbb{Z}}
\newcommand{\R}{\mathbb{R}}
\newcommand{\N}{\mathbb{N}}

\newcommand{\br}{\vspace{3 mm}}

\newcommand{\Aff}{{\rm{Aff\,}}}
\newcommand{\Inf}{{\rm{Inf\,}}}
\newcommand{\Homeo}{{\rm{Homeo\,}}}

\newcommand{\capp}{{\rm{cap}}}
\newcommand{\CO}{{\rm{CO}}}

\swapnumbers
\theoremstyle{plain}
\newtheorem{thm}{Theorem}[section]
\newtheorem{cor}[thm]{Corollary}
\newtheorem{lem}[thm]{Lemma}
\newtheorem{prop}[thm]{Proposition}

\theoremstyle{definition}
\newtheorem{defn}[thm]{Definition}
\newtheorem{rmk}[thm]{Remark}
\newtheorem{question}[thm]{Question}
\newtheorem{exa}[thm]{Example}

\begin{document}

\title[Affine Evaluation Maps, Dimension Groups, and Fair measures]
{Affine Evaluation Maps, Dimension Groups, and Fair measures}

\author{Eli Glasner}
\address{Department of Mathematics\\
Tel Aviv University\\
Tel Aviv\\
Israel}
\email{glasner@math.tau.ac.il}
\date{August 19, 2026}

\begin{abstract}
Let $X$ be a Cantor space and let $Q\subset M_{fc}(X)$ be a compact Choquet
simplex of atomless full-support probability measures.  We introduce the
associated \emph{Affine Evaluation Map} (AEM), which assigns to each clopen
set $A\subset X$ the affine function
\[
\widehat A(\mu)=\mu(A),\qquad \mu\in Q,
\]
and study the geometric subset condition obtained by comparing these
evaluation functions pointwise on $Q$.

For a good geometric AEM we construct the ordered group
\[
G_Q=C(X,\mathbb Z)/N_Q,
\qquad
N_Q=\left\{f\in C(X,\mathbb Z):
\int f\,d\mu=0\ \text{for every }\mu\in Q\right\},
\]
and show that it is a simple dimension group whose normalized state space is
canonically $Q$.  We prove that its order interval $[0,u]$ is precisely the
clopen scale and that
\[
J_Q=N_Q,
\]
where $J_Q$ is generated by the elementary relations
$\mathbf 1_A-\mathbf 1_B$ with $\widehat A=\widehat B$.  We also show that
the full stabilizer $\mathcal H_Q$ has invariant-measure simplex exactly
$Q$.

Using the clopen-scale property and the Herman--Putnam--Skau realization
theorem, we obtain a dimension-group proof that every good geometric AEM is
realized by a minimal Cantor homeomorphism $T$ with $Q=M_T(X)$.  For a
Cantor minimal system we identify $G_Q$ with the classical dimension group
modulo infinitesimals.  We further distinguish goodness, fairness,
ergodicity, and minimality of the measure stabilizer by explicit examples.

Finally, we apply the AEM framework to minimal Cantor actions of countable
amenable groups.  If $Q=M_G(X)$, then $Q$ canonically defines a proper
geometric AEM, and we clarify which parts of the preceding theory depend
only on $Q$ and which are specifically $\mathbb Z$-dynamical.  In
particular, $Q$ is good if and only if there exists a minimal homeomorphism
$T$ of $X$ such that
\[
M_T(X)=M_G(X).
\]
\end{abstract}

\keywords{Cantor dynamics, Choquet simplices, dimension groups, affine evaluation, invariant measures, amenable group actions, fair measures}

\subjclass[2020]{Primary  37A05, 37B05; Secondary 37A20, 46L80}

\maketitle

\tableofcontents

\setcounter{section}{0}
\setcounter{secnumdepth}{2}

\section{Introduction}

In the study of Cantor minimal systems, invariant probability measures and
ordered dimension groups play central roles in orbit-equivalence theory:
the ordered dimension group is the basic invariant for strong orbit
equivalence, while the simplex of invariant measures is fundamental for
orbit equivalence \cite{HPS-92,GW-95,GPS-95}.  When a system is uniquely
ergodic, the simplex reduces to a single measure $\mu$.  In a seminal paper
\cite{Akin-05}, E. Akin introduced the notion of a ``good measure'' on a
Cantor space and proved that any such measure can be realized as the unique
invariant measure of a minimal homeomorphism.  His subset condition was
motivated by the earlier work \cite{GW-95}, where the unique invariant
measure of a strictly ergodic Cantor system was shown to satisfy this
property.

In this paper we develop the corresponding framework for a spatially
embedded Choquet simplex $Q\subset M(X)$.  The associated \emph{Affine
Evaluation Map} (AEM) assigns to each clopen set $A\subset X$ the continuous
affine function
\[
\widehat A(\mu)=\mu(A),\qquad \mu\in Q.
\]
Our main tool is the \emph{subset condition}, an abstract Boolean property
which may be viewed as the measure-theoretic or affine shadow of dynamical
comparison (see Remark \ref{rmk-comp} below).

\begin{defn}\ 
Let $X$ denote the Cantor set.
\begin{enumerate}
  \item Let $M(X)$ denote the compact convex set of Borel probability
  measures on $X$, and let $M_{fc}(X)$ denote the set of atomless,
  full-support probability measures on $X$.
  \item
  For a homeomorphism $T$ of $X$ we let $M_T(X)$ denote the (compact, convex) Choquet simplex of $T$-invariant measures in $M(X)$.
  \item 
  Let $\mathcal{O}^+ = \CO(X)$ be the countable Boolean algebra of clopen subsets of the Cantor space $X$.
  Let $\mathcal{O} = \mathcal{O}^+ \setminus \{\emptyset, X\}$ be the collection of proper clopen subsets.
  \item 
  Given a Choquet simplex $Q \subset M(X)$
  \footnote{For many facts proven in the sequel about AEMs, one only needs to assume that $Q \subset M_{fc}(X)$ is
  nonempty, compact and convex. The assumption that $Q$ is a Choquet simplex is mainly used in the proof of 
  Theorem \ref{thm-GQ-dimension-group}, where we need the fact that $\Aff(Q)$ satisfies the Riesz interpolation property.}, we define the \emph{Affine Evaluation Map (AEM)} $\Phi = \Phi_Q$
  on $\CO(X)$ by assigning to each $A \in \CO(X)$ the continuous affine function $\widehat{A} \in \Aff(Q)$ given by:
  $$\Phi(A) = \hat{A}, \qquad \widehat{A}(\mu) = \mu(A) \quad \text{for all } \mu \in Q.$$
  
  \item 
  We define the pointwise strict order on the image of the AEM as follows: for $A, B \in \mathcal{O}^+$, we write $\widehat{A} < \widehat{B}$ if and only if $\mu(A) < \mu(B)$ for all $\mu \in Q$.
  \item We say the AEM generated by $Q$ is \emph{proper} if every $\mu \in Q$ has full support. This implies that for any $A \in \mathcal{O}$, we have $0 < \inf_{\mu \in Q} \mu(A) \le \sup_{\mu \in Q} \mu(A) < 1$.
  \item
  For a given AEM $\Phi$ we define the function 
  $$
  \capp(A) = \sup_{\mu \in Q}\mu(A) = \sup_{\mu \in Q} \hat{A}(\mu).
  $$
 By compactness of $Q$ we have $\capp(A) = \mu(A)$ for at least one measure $\mu \in Q$.
 Note that, with $B = A^c$,  $1 - \capp(A) = \inf_{\mu \in Q} \ \hat B(\mu) $.
\end{enumerate}
\end{defn}

As in \cite{GW-95} we denote by $FG(X, T)$ the full group of the Cantor minimal system $(X, T)$. The following result is from \cite{GW-95}.

\begin{thm}\label{thm-GW95}
Let $(X, T)$ be minimal and let $Q = M_T(X)$. If $\widehat{A} < \widehat{B}$, then there is a homeomorphism $h \in FG(X, T)$ such that $h(A) \subset B$. Denoting $A' = h(A)$, we then have that:
$$A' \subset B \quad \text{and} \quad \widehat{A'} = \widehat{A}.$$
\end{thm}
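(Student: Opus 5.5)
The plan is the classical Kakutani--Rokhlin tower argument of \cite{GW-95}. The second assertion is immediate once $h$ is built: every element of $FG(X,T)$ preserves each $T$-invariant measure, since on each piece of a finite clopen partition it acts as a fixed power $T^{n}$. Hence $\mu(h(A))=\mu(A)$ for all $\mu\in Q$, i.e.\ $\widehat{A'}=\widehat A$. So the whole task is to construct $h\in FG(X,T)$ with $h(A)\subset B$.

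\emph{Step 1: a uniform gap.} The function $\mu\mapsto\mu(B)-\mu(A)$ is continuous on the compact set $Q$ and strictly positive, so there is $\epsilon>0$ with $\mu(B)-\mu(A)\ge\epsilon$ for all $\mu\in Q$.

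\emph{Step 2: a tower adapted to $A$ and $B$ in which $B$ beats $A$ on every column.} By unique-ergodicity-type uniform estimates (the standard compactness argument for minimal systems), for $N$ large the averages $\frac1N\sum_{j<N}\mathbf 1_C(T^jx)$ are within $\epsilon/4$ of $[\inf_{Q}\mu(C),\sup_Q\mu(C)]$ uniformly in $x$. Applied to $C=B$ and $C=A$ this is too weak, since we need the difference $\mathbf 1_B-\mathbf 1_A$ itself. So apply it to $f=\mathbf 1_B-\mathbf 1_A$. For all $N\ge N_0$ and all $x$ we get
\[
\frac1N\sum_{j<N} f(T^jx)\;\ge\;\min_{\mu\in Q}\int f\,d\mu-\frac{\epsilon}{2}\;\ge\;\frac{\epsilon}{2}>0 .
\]
The lower bound holds because any limit of empirical measures along a bad sequence of points would be an invariant measure violating the bound. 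Now choose a clopen base set $Z$ with return times all $\ge N_0$; this is possible by minimality, since $Z$ can be taken as a small neighbourhood of a point. Refine the resulting Kakutani--Rokhlin partition so that $A$ and $B$ are unions of levels. Then every column of height $h_k\ge N_0$ contains strictly more levels inside $B$ than inside $A$.

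\emph{Step 3: define $h$ column by column.} In each column, choose an injection $\sigma_k$ from the set of $A$-levels into the set of $B$-levels. Extend it to a bijection of all the levels of the column: the $A$-levels go onto the chosen $B$-levels, and the remaining levels are matched among themselves arbitrarily, which is possible since both sides have the same cardinality. Define $h$ on level $T^iZ_k$ to be $T^{\sigma_k(i)-i}$. Then $h$ is a homeomorphism, it lies in $FG(X,T)$, and $h(A)\subset B$.

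The main obstacle is Step 2, namely the uniform positivity of the ergodic averages of $\mathbf 1_B-\mathbf 1_A$ along every orbit segment of length at least $N_0$. I would prove it by contradiction using weak$^*$ compactness. Points $x_n$ with lengths $N_n\to\infty$ violating the bound would give empirical measures converging to some $\nu\in M_T(X)=Q$ with $\int f\,d\nu\le\epsilon/2$, contradicting Step 1. A second detail is checking that levels refine both $A$ and $B$ while keeping all heights $\ge N_0$; refining a tower only splits columns and never shortens them, so this is harmless.
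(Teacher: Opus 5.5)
Your argument is correct and is essentially the original Glasner--Weiss proof from \cite{GW-95}, which the paper cites rather than reproving. It has three steps: uniform positivity of the Birkhoff sums of $\mathbf 1_B-\mathbf 1_A$ along orbit segments of length $\ge N_0$ (via weak$^*$ limits of empirical measures), a Kakutani--Rokhlin partition with all heights $\ge N_0$ in which $A$ and $B$ are unions of levels, and a permutation of levels inside each column sending $A$-levels into $B$-levels. One small imprecision: ``acts as a fixed power of $T$ on a finite clopen partition'' describes the topological full group rather than every element of $FG(X,T)$. Your $h$ is of that form, however, so $\widehat{A'}=\widehat A$ is justified.
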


This motivates the following definition:

\begin{defn}
An AEM generated by a compact convex set $Q$ satisfies the \emph{subset condition} if for every $A, B \in \mathcal{O}^+$,
$$\widehat{A} < \widehat{B} \implies \exists A' \subset B \quad \text{with} \quad \widehat{A'} = \widehat{A}.$$
In this case we say that the AEM on $Q$ is good, or simply that {\em $Q$ is good}.
\end{defn}

\begin{lem}\label{lem-quasi}
Let $Q \subset M_{fc}(X)$ define a proper, good AEM. Then, 
\begin{enumerate}
\item
if $\widehat{A} < \widehat{B}$, there is $D \in \mathcal{O}$ with:
$$\widehat{D} = \widehat{B} - \widehat{A}.$$
\item
If $A, B, C \in \mathcal{O}$ satisfy $\widehat{A} = \widehat{B} + \widehat{C}$, then there is a clopen partition $A = A_B \sqcup A_C$ such that $\widehat{A_B} = \widehat{B}$ and $\widehat{A_C} = \widehat{C}$.
This extends immediately by induction to any finite sum $\widehat{A} = \sum_{m} \widehat{B_m}$.
\end{enumerate}
\end{lem}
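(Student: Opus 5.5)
Both parts follow directly from the subset condition together with additivity of the evaluation map, $\widehat{A \sqcup C} = \widehat A + \widehat C$ for disjoint clopen $A,C$, which holds because each $\mu\in Q$ is additive.

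For (1), suppose $\widehat A < \widehat B$. Then $\mu(A)<\mu(B)\le 1$ for every $\mu\in Q$, so $A\neq X$; similarly $B\neq\emptyset$. The subset condition applies to $A,B\in\mathcal O^+$ and gives a clopen $A'\subset B$ with $\widehat{A'}=\widehat A$. Put $D=B\setminus A'$, which is clopen. Additivity gives $\widehat B=\widehat{A'}+\widehat D$, hence $\widehat D=\widehat B-\widehat A$. It remains to check $D\in\mathcal O$:
\begin{itemize}
\item[$\bullet$] $D\neq\emptyset$, because $\widehat D(\mu)=\mu(B)-\mu(A)>0$ for every $\mu$, and $Q$ is nonempty.
\item[$\bullet$] $D\neq X$. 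If $D=X$, then $B=X$ and $A'=\emptyset$, so $\widehat A=0$. Then $A=\emptyset$, since each $\mu\in Q$ has full support and a nonempty clopen set has positive measure. In that degenerate case ($A=\emptyset$, $B=X$) the formula gives $D=X$, which is not a proper set.
\end{itemize}
So the claim $D\in\mathcal O$ needs the implicit assumption that $A,B$ are not simultaneously $\emptyset$ and $X$. If $A\in\mathcal O$ or $B\in\mathcal O$, this is automatic and $D\in\mathcal O$.

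For (2), suppose $\widehat A=\widehat B+\widehat C$ with $A,B,C\in\mathcal O$. Properness gives $\widehat C>0$ pointwise, since $C$ is a nonempty clopen set. Hence $\widehat B<\widehat A$. The subset condition then produces a clopen $A_B\subset A$ with $\widehat{A_B}=\widehat B$. Let $A_C=A\setminus A_B$. Additivity gives $\widehat{A_C}=\widehat A-\widehat B=\widehat C$, and $A=A_B\sqcup A_C$ is a clopen partition. Both pieces are nonempty because their evaluations are strictly positive. They are proper because they are contained in $A\neq X$.

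For the extension to a finite sum $\widehat A=\sum_{m=1}^n\widehat{B_m}$ with all $B_m\in\mathcal O$, induct on $n$; the case $n=1$ is trivial. For $n\ge 2$, first apply (1) to $\widehat{B_1}<\widehat A$ (strict because $\widehat{B_m}>0$ for $m\ge2$). This gives $E\in\mathcal O$ with $\widehat E=\sum_{m\ge2}\widehat{B_m}$. Next apply the two-term case to $\widehat A=\widehat{B_1}+\widehat E$, which splits $A=A_1\sqcup A'$ with $\widehat{A_1}=\widehat{B_1}$ and $\widehat{A'}=\widehat E=\sum_{m\ge2}\widehat{B_m}$. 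Finally apply the induction hypothesis to $A'$.

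The argument is essentially formal. The only point needing care is ensuring the sets produced are nonempty and proper, and that is exactly where properness (full support) is used.
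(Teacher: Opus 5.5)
Your proof is correct and follows the paper's argument. In (1) you take $D=B\setminus A'$, where $A'\subset B$ is the copy given by the subset condition; in (2) you use $\widehat C>0$ to get $\widehat B<\widehat A$, place a copy $A_B\subset A$, and set $A_C=A\setminus A_B$. Beyond that, you check that the produced sets are nonempty and proper, note the degenerate case $A=\emptyset$, $B=X$ (where indeed no $D\in\mathcal O$ exists), and write out the induction for finite sums, all of which the paper leaves implicit.
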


\begin{proof}
(1)\ 
In the definition of the subset condition, since $\widehat{A} < \widehat{B}$, there exists $A' \subset B$ such that $\widehat{A'} = \widehat{A}$. Take $D = B \setminus A'$. Since $A' \subset B$, we have $\mathbf{1}_B = \mathbf{1}_{A'} + \mathbf{1}_D$.

Therefore, evaluating any $\mu \in Q$ yields:
$$\mu(D) = \mu(B) - \mu(A') = \mu(B) - \mu(A).$$
Thus, $\widehat{D} = \widehat{B} - \widehat{A}$.

(2)\ 
 Since $\widehat{B} < \widehat{B} + \widehat{C} = \widehat{A}$, the subset condition implies there exists $A_B \subset A$ with $\widehat{A_B} = \widehat{B}$. Letting $A_C = A \setminus A_B$, Lemma \ref{lem-quasi} guarantees $\widehat{A_C} = \widehat{A} - \widehat{A_B} = \widehat{C}$.  
\end{proof}

\br

\begin{question}\label{qu-good-dynamical}
Does the converse of Theorem \ref{thm-GW95} hold? I.e., if $Q \subset M(X)$ is a Choquet simplex that defines a good AEM, is it of the form $Q = M_T(X)$, for some Cantor minimal $(X, T)$?
\end{question}

\begin{rmk}
By Akin's theorem this is always the case when $Q = \{\mu\}$ with $\mu$ a good measure. Moreover, it then follows that the corresponding $T$ can be chosen so that the system $(X, \mu, T)$ is minimal and uniquely ergodic.
\end{rmk}

\begin{rmk}
By a theorem of Downarowicz \cite{Dow-91} every abstract Choquet simplex can be realized as $M_T(X)$ for a Toeplitz Cantor minimal system $(X, T)$. The above question is of a different nature. We are given a specific spatial embedding of a Choquet simplex $Q \subset M(X)$ and ask whether it is identically $M_T(X)$ for some minimal $T \in \Homeo(X)$.
\end{rmk}

\begin{defn}
We say that an AEM is \emph{geometric} when it is defined by a Choquet simplex $Q \subset M(X)$, and that it is \emph{dynamical} when $Q = M_T(X)$ for a minimal dynamical system $(X, T)$.
\end{defn}

Our aim is to give an AEM and dimension-group proof of the affirmative
answer to Question \ref{qu-good-dynamical}.  We first show that a good AEM
yields a simple dimension group, and then apply the
Herman--Putnam--Skau construction \cite{HPS-92} to obtain a
Bratteli--Vershik system realizing $Q$ as its invariant-measure simplex
(Theorem \ref{thm-realization}).  This method generalizes the
dimension-group proof of Akin's theorem suggested in \cite{Gl-02}.

The realization theorem itself is not new; related 
realization results were proved in \cite{Dah-08,IM-17,Melleray-19}.  The
point here is the AEM formulation and the accompanying dimension-group
approach.  In particular, the clopen-scale theorem and the identity
$J_Q=N_Q$ make explicit how the Boolean relations between clopen sets are
encoded by affine evaluation.  We believe this gives a useful perspective
on the interaction between invariant measures, dimension groups, and
comparison.

\begin{rmk}\label{rmk-comp}
The notion of ``comparison'' was initiated in \cite{GW-95} and \cite{Gl-02}, although not in this name. In recent years, the notion of ``strict comparison'' has become a central topic in topological dynamics, playing a vital role in the classification of $C^*$-crossed products, see \cite{Matui-12, K-20, KS-20}.
\end{rmk}

In Sections \ref{sec-fair} and \ref{sec-stabilizer}
we define a measure $\mu$ to be {\em fair} if there exists some $T \in \Homeo(X)$ such that the system
$(X,T)$ is minimal and $\mu$ is $T$-invariant. We then 
distinguish goodness, fairness,
ergodicity, and minimality of the measure stabilizer by explicit examples.

\br

An advantage of the AEM formulation is that most of the structural theory
depends only on the embedded simplex $Q\subset M(X)$ and not on a
distinguished acting transformation.  This allows us, in the final
sections, to apply the theory to minimal Cantor actions
$G\curvearrowright X$ of countable amenable groups.  For such an action the
invariant-measure simplex $Q=M_G(X)$ is a compact Choquet simplex of
atomless full-support measures.  If $Q$ satisfies the subset condition, all
of the preceding AEM conclusions apply unchanged.  The HPS and GPS results
enter only afterwards, through an auxiliary minimal $\mathbb Z$-action
realizing the same invariant simplex; they do not describe the orbit
structure of the original $G$-action.  In particular, we show that
$M_G(X)$ is good if and only if there exists a minimal homeomorphism $T$ of
$X$ such that $M_T(X)=M_G(X)$.  This reformulates, in AEM language, a
natural open problem concerning invariant measures of minimal amenable
Cantor actions.

\begin{rmk}
We note that there is a close connection between the present point of view
and the work of Bezuglyi and Handelman \cite{BH-14}, who studied goodness
of measures, and more generally goodness of sets of traces, from the
perspective of dimension groups.  In particular, several phenomena
concerning good and non-good invariant measures which arise below also
appear in their work.  Our emphasis here is different: we start with a
spatially embedded simplex $Q\subset M(X)$ and its affine evaluation map
on the Boolean algebra of clopen subsets of $X$, and develop the associated
dimension-group structure directly from the subset condition.  
\end{rmk}

\br

{\bf Acknowledgement:}
I thank Ethan Akin for reading a draft of this work and suggesting some improvements.
While preparing this work I was helped, both mathematically and editorially, by ChatGPT plus.
Of course, I am responsible for the validity of all the proofs in this paper.

\br

\section{A Uniform Clopen Rokhlin Lemma}\label{sec-uniform-RL}

The following elementary uniform clopen Rokhlin lemma provides the dynamical model for the Boolean construction of the next section (see  Glasner and Weiss \cite{GW-06}).

\begin{thm}[Uniform Clopen Rokhlin Lemma]\label{thm-UR}
Let $(X, T)$ be a minimal dynamical system where $X$ is a Cantor space. Given a positive integer $n$ and a real number $\epsilon > 0$, there exists a clopen subset $B \subset X$ such that the sets $B, TB, \dots, T^{n-1}B$ are pairwise disjoint, and for every $T$-invariant probability measure $\mu \in M_T(X)$, the remainder $R = X \setminus \bigcup_{j=0}^{n-1} T^j B$ satisfies:
$$\mu(R) < \epsilon.$$
\end{thm}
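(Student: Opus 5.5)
Fix $n$ and $\epsilon$. The idea is to take $B$ to be a small clopen set and cut the return-time towers over it into columns of height $n$. Choose $N$ with $n/N < \epsilon$ and set up the return-time (Kakutani--Rokhlin) decomposition over a small clopen set.

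\textbf{Step 1: a set with large return times.} Minimality together with the fact that $X$ has no isolated points gives a nonempty clopen $C$ such that $C, TC, \dots, T^{N-1}C$ are pairwise disjoint. To get it, pick any $x$. The points $x, Tx, \dots, T^{N-1}x$ are distinct, since a minimal system on an infinite space has no periodic points. Take $C$ to be a clopen neighbourhood of $x$ small enough that these $N$ translates are disjoint. Then every $y \in C$ has first return time $r_C(y) \ge N$.

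\textbf{Step 2: the Kakutani--Rokhlin partition.} By minimality, $r_C$ is finite and bounded on $C$, and it is continuous because $C$ is clopen. Hence $C = \bigsqcup_{k} C_k$ with $C_k = \{r_C = k\}$ clopen, only finitely many nonempty, and all $k \ge N$. Then
\[
X = \bigsqcup_k \bigsqcup_{j=0}^{k-1} T^j C_k .
\]
Write each height as $k = q_k n + s_k$ with $0 \le s_k < n$, and put
\[
B = \bigsqcup_k \bigsqcup_{i=0}^{q_k-1} T^{in} C_k ,
\]
which is clopen. The sets $T^jB$ for $0 \le j < n$ are pairwise disjoint because they occupy distinct levels of the towers. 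Their union consists of every level except the top $s_k$ levels of each tower, so
\[
R = \bigsqcup_k \bigsqcup_{j=q_k n}^{k-1} T^j C_k .
\]

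\textbf{Step 3: uniform estimate.} Let $\mu \in M_T(X)$. By invariance all levels of the $k$-th tower have measure $\mu(C_k)$, so
\[
\mu(R) = \sum_k s_k\, \mu(C_k) \le n \sum_k \mu(C_k) = n\,\mu(C).
\]
On the other hand,
\[
1 = \sum_k k\,\mu(C_k) \ge N\,\mu(C),
\]
so $\mu(C) \le 1/N$. Therefore $\mu(R) \le n/N < \epsilon$, and the bound holds for every invariant $\mu$ at once.

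The main obstacle is making the estimate uniform over all $\mu$. In this approach it is handled by the deterministic lower bound $N$ on the tower heights, which comes from Step 1; no individual measure is used. The remaining steps are routine checks of clopenness and disjointness.
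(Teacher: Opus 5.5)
Your proof is correct and follows the paper's argument essentially step for step. Both take a clopen neighbourhood of a point whose first $N$ iterates are disjoint, with $N > n/\epsilon$, form the first-return Kakutani--Rokhlin partition (all heights at least $N$), and pack $n$-blocks into each column. Both then get the uniform bound $\mu(R) \le n\,\mu(C) \le n/N < \epsilon$ for every $\mu \in M_T(X)$.
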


\begin{proof}
Fix an integer $N > n/\epsilon$. Because $X$ is an infinite Cantor space and $T$ is minimal, the system is aperiodic. Therefore, for any $x \in X$, the points $x, Tx, \dots, T^{N-1}x$ are distinct. By the continuity of $T$ and the fact that $X$ is zero-dimensional, we can find a clopen neighborhood $U$ of $x$ such that the iterates $U, TU, \dots, T^{N-1}U$ are pairwise disjoint.

Consider the first return time function to $U$, denoted $r_U : U \to \mathbb{N}$, where:
$$r_U(y) = \min\{m \ge 1 : T^m y \in U\}.$$
Because $X$ is minimal, every orbit returns to $U$, so $r_U$ is finite everywhere. Because $U$ is clopen and $T$ is continuous, $r_U$ is locally constant. Since $U$ is compact, $r_U$ takes only finitely many values. This yields a finite clopen partition of the base $U = \bigsqcup_{k \in K} U_k$, where $U_k = \{y \in U : r_U(y) = k\}$ and $K$ is a finite set of positive integers.

This defines a finite Kakutani-Rokhlin partition of $X$ into clopen sets:
$$X = \bigsqcup_{k \in K} \bigsqcup_{j=0}^{k-1} T^j U_k.$$
Because the first $N$ iterates of $U$ are disjoint, the minimum return time to $U$ is at least $N$. Thus, $k \ge N$ for all $k \in K$.

We now construct the Rokhlin base $B$ by packing blocks of height $n$ into each column of the Kakutani skyscraper. For each column $U_k$, let $m_k = \lfloor k/n \rfloor$ be the number of full $n$-blocks that fit into the column. Define the base for this column as:
$$B_k = \bigcup_{m=0}^{m_k-1} T^{mn} U_k,$$
and set the global base $B = \bigsqcup_{k \in K} B_k$.

Because $B$ is a finite union of clopen sets, it is clopen. By construction, the sets $B, TB, \dots, T^{n-1}B$ simply translate the $n$-blocks strictly upward within their respective columns, never spilling over the top because $m_k n \le k$. Therefore, $B, TB, \dots, T^{n-1}B$ are pairwise disjoint.

Finally, we estimate the measure of the remainder $R = X \setminus \bigcup_{j=0}^{n-1} T^j B$. In each column $k$, the untiled remainder at the top of the column has height $r_k = k - n\lfloor k/n \rfloor$, which is strictly less than $n$. Thus, the remainder is contained within the top $n - 1$ levels of each column:
$$R \subset \bigcup_{k \in K} \bigcup_{j=k-r_k}^{k-1} T^j U_k.$$

Let $\mu \in M_T(X)$ be any invariant measure. Since $\mu$ is $T$-invariant, the measure of each level in a column is $\mu(U_k)$. The total measure of the remainder is bounded by:
$$\mu(R) \le \sum_{k \in K} (n-1)\mu(U_k) < n \sum_{k \in K} \mu(U_k) = n\mu(U).$$

To bound $\mu(U)$, we use the fact that the columns partition $X$ and each column has height $k \ge N$:
$$1 = \mu(X) = \sum_{k \in K} k\mu(U_k) \ge \sum_{k \in K} N\mu(U_k) = N\mu(U).$$
Thus, $\mu(U) \le 1/N$.

It follows that for every invariant measure $\mu \in M_T(X)$,
$$\mu(R) < n\mu(U) \le \frac{n}{N} < \epsilon.$$
This completes the proof.
\end{proof}

The point of the preceding result for us is motivational. In the abstract setting of a good AEM there is no distinguished transformation $T$. Nevertheless, the subset condition allows us to replace the levels $T^j B$
 by mutually $Q$-equivalent clopen sets. The next section develops this Boolean analogue of the Rokhlin construction.

\section{A Boolean Rokhlin lemma}\label{sec-BRL}

Our aim in this section is as follows: given a Choquet simplex $Q \subset M_{fc}(X)$ such that the associated AEM is good, we would like to define a simple dimension group. In order to apply the Effros-Handelman-Shen theorem \cite{EHS-80}, we must construct an ordered, simple, unperforated abelian group with the Riesz interpolation property from the AEM.
 
\begin{lem}[Uniform Atomlessness]\label{lem:uniform_atomless}
Let $X$ be a Cantor space and $Q \subset M_{fc}(X)$ a compact Choquet simplex of non-atomic measures. For every $\epsilon > 0$, there exists a finite partition of $X$ into mutually disjoint clopen sets $P_1, \dots, P_M$ such that 
$\capp(P_j)=\sup_{\mu \in Q} \mu(P_j) < \epsilon$ for all $1 \le j \le M$.
\end{lem}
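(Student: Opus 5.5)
The plan is a compactness argument combining atomlessness of each individual measure with the compactness of $Q$ in the weak$^*$ topology. Fix $\epsilon>0$. Since $X$ is a Cantor space, fix a refining sequence of finite clopen partitions $\mathcal{P}_1 \prec \mathcal{P}_2 \prec \cdots$ whose mesh (maximal diameter of an atom, for a fixed compatible metric) tends to $0$. For instance, realize $X=\{0,1\}^{\N}$ and let $\mathcal{P}_n$ be the cylinder partition of length $n$. Define
$$
f_n(\mu) = \max_{P \in \mathcal{P}_n} \mu(P), \qquad \mu \in Q .
$$
I will show that $\sup_{\mu\in Q} f_n(\mu) \to 0$. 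The lemma then follows by taking $\{P_1,\dots,P_M\} = \mathcal{P}_n$ for $n$ large.

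The first step is to record the properties of $f_n$. Since each $P\in\mathcal{P}_n$ is clopen, $\mathbf 1_P$ is continuous. Hence $\mu \mapsto \mu(P)$ is weak$^*$ continuous on $Q$, and $f_n$, a finite maximum of such functions, is continuous. Because $\mathcal{P}_{n+1}$ refines $\mathcal{P}_n$, every atom of $\mathcal{P}_{n+1}$ lies inside an atom of $\mathcal{P}_n$, so $f_{n+1}\le f_n$ pointwise.

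The second step is pointwise convergence to $0$, which uses atomlessness of each $\mu\in Q$. Fix $\mu$ and $\delta>0$, and suppose $f_n(\mu)\ge\delta$ for all $n$. Since the partitions are nested, the atoms of measure $\ge\delta$ form a finitely branching tree that is infinite at every level. By K\"onig's lemma there is a decreasing chain $P^{(1)}\supset P^{(2)}\supset\cdots$ with $P^{(n)}\in\mathcal{P}_n$ and $\mu(P^{(n)})\ge\delta$. The intersection of this chain is a nonempty compact set whose diameter tends to $0$, so it is a single point $x$. Continuity from above then gives $\mu(\{x\})\ge\delta$, contradicting atomlessness. Hence $f_n(\mu)\downarrow 0$ for every $\mu\in Q$.

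The last step upgrades this to uniform convergence, using Dini's theorem: a monotonically decreasing sequence of continuous functions on the compact space $Q$ converging pointwise to the continuous function $0$ converges uniformly. Choose $n$ with $\sup_Q f_n<\epsilon$; this gives $\capp(P)<\epsilon$ for all $P\in\mathcal{P}_n$.

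The only genuinely delicate point is the second step, the passage from ``no atoms'' to ``small on fine partitions'' for a single measure, which the K\"onig/nested-intersection argument handles. The simplex structure, full support, and goodness of $Q$ are not used; only compactness and atomlessness matter. If I wanted to avoid K\"onig's lemma, I would instead argue by contradiction directly: take $\mu_n\in Q$ and $P_n\in\mathcal{P}_n$ with $\mu_n(P_n)\ge\epsilon$, pass to a weak$^*$ convergent subsequence $\mu_n\to\mu\in Q$ with the nested atoms shrinking to a point $x$, and derive $\mu(\{x\})\ge\epsilon$ using that $\mu_{n}(P^{(k)})\ge\epsilon$ for $n\ge k$ and that $P^{(k)}$ is clopen.
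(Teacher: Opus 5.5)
Your proof is correct, but it takes a different route from the paper's. The paper fixes no partition in advance. For each point $x$ and each $\mu\in Q$ it picks a clopen neighbourhood $U_{x,\mu}$ with $\mu(U_{x,\mu})<\epsilon$ and extends this inequality to a weak$^*$ neighbourhood of $\mu$. Compactness of $Q$ then lets it intersect finitely many of these into one clopen $U_x$ that is $\epsilon$-small for all of $Q$ at once. Compactness of $X$ gives a finite subcover, which is disjointified. So the paper uniformizes over $Q$ locally at each point and then globalizes over $X$, using only the open-cover form of compactness twice.

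You reverse the order. You first prove the single-measure statement along a fixed filtration (the K\"onig/nested-intersection step, which is where compactness of $X$ enters). You then uniformize over $Q$ by Dini's theorem applied to the decreasing continuous functions $f_n$. Your route needs slightly more machinery (a compatible metric, K\"onig's lemma, Dini). In exchange it gives a formally stronger conclusion: $\sup_{\mu\in Q}\max_{P\in\mathcal P_n}\mu(P)\to0$ along any refining sequence of clopen partitions with mesh tending to $0$. Hence the small partition can be taken from any prescribed filtration, such as the cylinder partitions. The paper's argument gives this only after an extra Lebesgue-number step applied to its finite cover $\{U_{x_j}\}$.

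One small remark on your fallback sketch without K\"onig's lemma. The sets $P_n\in\mathcal P_n$ chosen there need not be nested. Before passing to the limit, you need a pigeonhole/diagonal extraction that produces a chain $P^{(1)}\supset P^{(2)}\supset\cdots$, with $P^{(k)}\in\mathcal P_k$ and $P_n\subset P^{(k)}$ for all sufficiently large $n$ in the subsequence. Once that is in place, $\mu(P^{(k)})=\lim\mu_n(P^{(k)})\geq\epsilon$ for every $k$, and the argument goes through as you describe.
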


\begin{proof}
Fix $\epsilon > 0$. Because every measure in $Q$ is non-atomic, for every $x \in X$ and every $\mu \in Q$, there exists a clopen neighborhood $U_{x,\mu}$ of $x$ such that $\mu(U_{x,\mu}) < \epsilon$.

Since the evaluation map $\nu \mapsto \nu(U_{x,\mu})$ is weakly continuous and $Q$ is compact, there exists a weak$^*$ open neighborhood $W_\mu$ of $\mu$ in $Q$ such that $\nu(U_{x,\mu}) < \epsilon$ for all $\nu \in W_\mu$. By the compactness of $Q$, finitely many such open sets $W_{\mu_1}, \dots, W_{\mu_k}$ cover $Q$. 

Let $U_x = \bigcap_{i=1}^k U_{x,\mu_i}$. Then $U_x$ is a clopen neighborhood of $x$ such that $\sup_{\nu \in Q} \nu(U_x) < \epsilon$. The collection $\{U_x\}_{x \in X}$ forms an open cover of the compact space $X$. We extract a finite subcover $U_{x_1}, \dots, U_{x_m}$. 

Let $P_1, \dots, P_M$ be the disjointified clopen partition generated by the intersections of this finite subcover. By construction, the partition consists of mutually disjoint clopen subsets of $X$, and $\sup_{\mu \in Q} \mu(P_j) < \epsilon$ for all $1 \le j \le M$.
\end{proof}

\begin{defn}
For clopen sets $A, B \subset X$, we define the relation $A \sim_Q B$ to mean that
$\widehat{A}(\mu)=\widehat{B}(\mu)$ for every $\mu\in Q$.
Thus $\sim_Q$ is an equivalence relation on $\CO(X)$.
\end{defn}

The following homogeneity observation is the link between the Boolean formulation
and the Rokhlin-tower argument, Proposition \ref{prop-uniform-R} below.
See the proof of \cite[Proposition 2.4]{Gl-02}.

\medskip
\begin{prop}[Boolean homogeneity]\label{prop-homog} 
If $A,B\in \CO(X)$ and $A\sim_Q B$, then there exists a homeomorphism
$h\in\Homeo(X)$ such that
\[
h(A)=B
\quad\text{and}\quad
h_*\mu=\mu \qquad (\mu\in Q).
\]
\end{prop}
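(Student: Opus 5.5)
The plan is a back-and-forth (Cantor–Bernstein type) construction. Throughout I work in the standing setting of this section: $Q\subset M_{fc}(X)$ defines a proper, good AEM. The statement as written does not mention goodness, but the splitting step below uses the subset condition through Lemma \ref{lem-quasi}, so the hypothesis must be assumed. If $A=\emptyset$ or $A=X$, then $B=A$ (by properness), and $h=\mathrm{id}$ works. Otherwise $A,A^c,B,B^c\in\mathcal O$, and $A\sim_Q B$ gives $A^c\sim_Q B^c$. I then construct two sequences of finite clopen partitions, $\mathcal P_n$ of $X$ and $\mathcal R_n$ of $X$, together with bijections $\pi_n:\mathcal P_n\to\mathcal R_n$, such that:
\begin{enumerate}
\item $\mathcal P_{n+1}$ refines $\mathcal P_n$, and $\mathcal R_{n+1}$ refines $\mathcal R_n$;
\item $\pi_n(P)\sim_Q P$ for all $P\in\mathcal P_n$;
\item the $\pi_n$ are compatible, i.e.\ $\pi_{n+1}(P')\subset\pi_n(P)$ whenever $P'\subset P$;
\item $\mathcal P_0=\{A,A^c\}$, $\mathcal R_0=\{B,B^c\}$, $\pi_0(A)=B$ and $\pi_0(A^c)=B^c$;
\item the mesh (maximal diameter, for a fixed compatible metric) of $\mathcal P_n$ and of $\mathcal R_n$ tends to $0$.
\end{enumerate}

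\textbf{Key splitting step.} Let $C\sim_Q D$ be nonempty clopen sets, and let $C=C_1\sqcup\dots\sqcup C_k$ be a partition into nonempty clopen sets. I claim $D$ has a clopen partition $D=D_1\sqcup\dots\sqcup D_k$ with $D_j\sim_Q C_j$. Since every $\mu\in Q$ has full support, $\mu(C_j)>0$ for every nonempty $C_j$. Hence for $k\ge 2$ we have $\widehat{C_1}<\widehat{C}=\widehat D$ strictly, pointwise on $Q$. Writing $\widehat D=\widehat{C_1}+\widehat{C_2\sqcup\dots\sqcup C_k}$, Lemma \ref{lem-quasi}(2) (which is exactly the subset condition applied to this strict inequality) produces $D_1\subset D$ with $D_1\sim_Q C_1$ and remainder $D\setminus D_1\sim_Q C_2\sqcup\dots\sqcup C_k$. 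Induction on $k$ finishes the claim. This is the step I expect to be the heart of the proof, and the only place goodness enters. The subtle points are making sure the inequality is strict, which is why full support and nonempty pieces are needed, and applying Lemma \ref{lem-quasi} when the relevant sets lie in $\mathcal O$ rather than in $\mathcal O^+$.

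\textbf{Back-and-forth.} At odd stages ("forth") I refine each $P\in\mathcal P_n$ into nonempty clopen pieces of diameter $<1/n$. I then use the splitting step on the pair $P\sim_Q\pi_n(P)$ to split $\pi_n(P)$ correspondingly, which defines $\mathcal R_{n+1}$ and $\pi_{n+1}$. At even stages ("back") I do the same with the roles reversed: I refine each $R\in\mathcal R_n$ into small pieces and transfer the splitting to $\pi_n^{-1}(R)$ via the same step.

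\textbf{Limit map.} For $x\in X$, let $P_n(x)$ be the atom of $\mathcal P_n$ containing $x$. The sets $\pi_n(P_n(x))$ form a decreasing sequence of nonempty compact sets whose diameters tend to $0$, so their intersection is a single point; call it $h(x)$. Each $\pi_n$ is a bijection, and both meshes tend to $0$. It follows that $h$ is a bijection mapping every atom $P\in\mathcal P_n$ onto $\pi_n(P)$, and that $h$ and $h^{-1}$ are continuous, since preimages of small atoms are atoms. In particular $h(A)=\pi_0(A)=B$.

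\textbf{Measure preservation.} Fix $\mu\in Q$. For every atom $R\in\bigcup_n\mathcal R_n$ we have $h_*\mu(R)=\mu(h^{-1}R)=\mu(\pi_n^{-1}R)=\mu(R)$. These atoms form a basis closed under finite intersections (each pair is either nested or disjoint) and generate the Borel $\sigma$-algebra, because the mesh tends to $0$. Therefore $h_*\mu=\mu$.
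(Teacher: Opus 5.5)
Your proof is correct and is essentially the paper's argument: a back-and-forth refinement of clopen partitions in which each splitting is transferred across via Lemma~\ref{lem-quasi}(2), with full support supplying the strict inequality. You are also right that the standing goodness hypothesis is needed for this. The differences from the paper are cosmetic. You run the construction on all of $X$ starting from $\{A,A^c\}\leftrightarrow\{B,B^c\}$, force the limit to be a homeomorphism by letting the mesh go to zero, and conclude with an explicit point map and a $\pi$-system argument. The paper instead treats $A\to B$ and $A^c\to B^c$ separately, uses an enumeration of clopen sets, passes through Stone duality, and then glues the two pieces.
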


\begin{proof}
We first construct a homeomorphism from $A$ onto $B$ which preserves the
restrictions of all the measures in $Q$.

Enumerate the clopen subsets of $A$ and of $B$.  By a back-and-forth
construction, build finite clopen partitions $\mathcal P_n$ of $A$ and
$\mathcal Q_n$ of $B$, together with bijections between their atoms, so that
corresponding atoms are $Q$-equivalent, each successive pair of partitions
refines the preceding pair, and the first $n$ clopen sets in each enumeration
are unions of atoms of the corresponding partition.

Indeed, suppose that $P\in\mathcal P_n$ corresponds to $Q_0\in\mathcal Q_n$
and that a new clopen set splits $P$ as $P=P_1\sqcup P_2$.  Since
\[
\widehat{Q_0}=\widehat P=\widehat{P_1}+\widehat{P_2},
\]
Lemma~\ref{lem-quasi}(2) gives a clopen partition
$Q_0=Q_1\sqcup Q_2$ with $Q_i\sim_Q P_i$.  The same argument, with the roles
of $A$ and $B$ reversed, gives the back step.  Empty pieces are simply
ignored.

The limiting correspondence is an isomorphism between the Boolean algebras
$\CO(A)$ and $\CO(B)$ which preserves every evaluation $\mu\in Q$.  By Stone
duality it is induced by a homeomorphism $h_A:A\to B$, and
\[
\mu(h_A^{-1}(C))=\mu(C)
\]
for every clopen $C\subset B$ and every $\mu\in Q$.
Since $A^c\sim_Q B^c$, the same construction gives a homeomorphism
$h_{A^c}:A^c\to B^c$ preserving all the restricted measures.  Gluing these
two homeomorphisms gives the required $h\in\Homeo(X)$.
\end{proof}

Let
\[
\mathcal{H}_Q=\{h\in\Homeo(X):h_*\mu=\mu\text{ for every }\mu\in Q\}.
\]

\begin{lem}\label{lem-minimal}
Every $\mathcal{H}_Q$-orbit is dense in $X$; i.e. $(X, \mathcal{H}_Q)$ is minimal.
\end{lem}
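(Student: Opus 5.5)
Fix $x\in X$ and a nonempty clopen set $V\subset X$; it suffices to find $h\in\mathcal H_Q$ with $h(x)\in V$. The approach is to move a small clopen neighbourhood of $x$ into $V$ by a measure-preserving homeomorphism, using the subset condition together with Proposition~\ref{prop-homog}. The whole argument lives in the proper, good setting of the preceding lemmas: full support of the measures in $Q$ is what gives a positive lower bound $\inf_{\mu\in Q}\mu(V)$, and the subset condition is what produces the equivalent copy inside $V$.

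The first step is to produce a small neighbourhood of $x$. If $V=X$ there is nothing to prove, since the identity works. Otherwise $V\in\mathcal O$, and because the AEM is proper and $Q$ is compact, $\delta:=\inf_{\mu\in Q}\mu(V)>0$. By Lemma~\ref{lem:uniform_atomless} with $\epsilon=\delta$ there is a clopen partition $P_1,\dots,P_M$ of $X$ with $\capp(P_j)<\delta$ for every $j$. Let $U$ be the atom containing $x$. Then for every $\mu\in Q$ we have $\mu(U)\le\capp(U)<\delta\le\mu(V)$, so $\widehat U<\widehat V$ pointwise on $Q$.

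The second step uses the subset condition (goodness) to obtain a clopen $U'\subset V$ with $\widehat{U'}=\widehat U$, i.e.\ $U\sim_Q U'$. Proposition~\ref{prop-homog} then supplies $h\in\mathcal H_Q$ with $h(U)=U'$. Hence $h(x)\in U'\subset V$. Since $V$ was an arbitrary nonempty clopen set and clopen sets form a base, the orbit $\mathcal H_Q x$ is dense, which gives minimality.

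There is no serious obstacle once these tools are in place. The only points needing care are the positivity of $\delta$, which comes from properness (full support) and compactness of $Q$, and the bound $\sup_{\mu\in Q}\mu(U)<\delta$, which has to hold uniformly over all of $Q$ at once. Lemma~\ref{lem:uniform_atomless} is exactly what provides that uniform bound. The real content is in Proposition~\ref{prop-homog}, which is already established.
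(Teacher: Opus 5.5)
Your proof is correct and follows the paper's argument essentially step for step: the uniform-atomlessness partition gives an atom $P\ni x$ with $\widehat P<\widehat V$, the subset condition places a $Q$-equivalent copy of $P$ inside $V$, and Boolean homogeneity (Proposition~\ref{prop-homog}) supplies $h\in\mathcal H_Q$ carrying $P$ onto that copy. Your separate treatment of $V=X$ is harmless but unnecessary, since the same argument works there with $\delta=1$.
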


\begin{proof}
Fix $x\in X$ and a nonempty clopen set $V$.  Since all measures in $Q$ have
full support and $Q$ is compact,
\[
\delta=\min_{\mu\in Q}\mu(V)>0.
\]
Apply Lemma~\ref{lem:uniform_atomless} with $\epsilon=\delta$, and let $P$ be
the atom of the resulting partition which contains $x$.  Then
\[
\widehat P<\widehat V.
\]
By the subset condition there is a clopen set $C\subset V$ such that
$C\sim_Q P$.  the Boolean homogeneity claim gives an element
$h\in \mathcal{H}_Q$ with $h(P)=C$.  In particular, $h(x)\in V$.
\end{proof}

\begin{defn}
Fix an integer $L\geq1$.  An \emph{$L$-dividing partition} of a clopen
set $C$ is a finite clopen partition
\[
C=\bigsqcup_i\bigsqcup_{j=0}^{m_i-1} C_{i,j},
\qquad m_i\geq L,
\]
arranged into columns
\[
\mathcal C_i=(C_{i,0},\ldots,C_{i,m_i-1}),
\]
such that, for every $i$ and $j$, there exists
$g_{i,j}\in\mathcal H_Q$ satisfying
\[
g_{i,j}(C_{i,0})=C_{i,j}.
\]\end{defn}

\begin{lem}[Dividing-partition claim]\label{lem-dividing}
Every clopen set $C$ admits an $L$-dividing partition.
\end{lem}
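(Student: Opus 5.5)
The plan is to build the columns by first carving $L$ mutually $Q$-equivalent disjoint blocks out of $C$ that fill most of $C$, and then absorbing the leftover into the columns by refining. Throughout, $Q$ defines a proper good AEM. Because of Proposition~\ref{prop-homog}, the requirement $g_{i,j}(C_{i,0})=C_{i,j}$ with $g_{i,j}\in\mathcal H_Q$ is equivalent to $C_{i,j}\sim_Q C_{i,0}$. So it suffices to produce a clopen partition of $C$ into columns of length $\ge L$ whose levels are pairwise $Q$-equivalent. The case $C=\emptyset$ is trivial, so assume $C\neq\emptyset$ and put $a=\min_{\mu\in Q}\mu(C)>0$.

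\textbf{Step 1 (approximate $L$-fold division).} Apply Lemma~\ref{lem:uniform_atomless} with a very small $\eta\ll a/L$ and intersect the resulting partition with $C$. This gives atoms $P_1,\dots,P_M$ of $C$ with $\capp(P_j)<\eta$. Using these atoms and the subset condition, I would build clopen sets $E_0,\dots,E_{L-1}\subset C$ with the following properties.
\begin{itemize}
\item The $E_k$ are pairwise disjoint and $E_k\sim_Q E_0$ for every $k$.
\item The remainder $R=C\setminus\bigsqcup_k E_k$ satisfies $\widehat R<\widehat{E_0}$ pointwise on $Q$.
\end{itemize}
For the construction, I would first fix $E_0$ as a union of atoms with $\widehat{E_0}$ slightly below $\widehat C/L$ uniformly in $\mu$. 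Then I would find the copies $E_1,\dots,E_{L-1}$ one at a time inside $C\setminus(E_0\cup\dots\cup E_{k-1})$. Each step uses the subset condition, which applies because $\widehat{E_0}<\widehat C-k\widehat{E_0}$ pointwise. The resulting remainder is then $\widehat R=\widehat C-L\widehat{E_0}$, which is small but positive.

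\textbf{Step 2 (absorbing the remainder).} Given Step 1, since $\widehat R<\widehat{E_0}$, the subset condition gives $R'_0\subset E_0$ with $R'_0\sim_Q R$. Choose $g_k\in\mathcal H_Q$ with $g_k(E_0)=E_k$, and set $R'_k=g_k(R'_0)\subset E_k$. The partition of $C$ then consists of two columns:
\[
(R,R'_0,\dots,R'_{L-1}) \quad\text{of length } L+1,\qquad (E_0\setminus R'_0,\dots,E_{L-1}\setminus R'_{L-1}) \quad\text{of length } L.
\]
In each column the levels are $Q$-equivalent, because $g_k$ preserves every $\mu\in Q$. Empty pieces are discarded; if $R=\emptyset$, a single column suffices.

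\textbf{Main obstacle.} The hard part is Step 1, specifically finding a union of atoms $E_0$ whose function $\widehat{E_0}$ lies pointwise strictly between $(\widehat C-\widehat{E_0})/L$ and $\widehat C/L$ for all $\mu\in Q$ simultaneously. This is a uniform approximate divisibility statement for affine functions: the ratio $\widehat C/\widehat{P_j}$ varies with $\mu$, so a greedy choice of atoms controls $\widehat{E_0}$ only at individual measures.

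I would first try a randomized/averaging choice. Split the atoms of $C$ into $L$ groups of roughly equal $\widehat{\cdot}$-mass, with the error controlled by $\eta$ uniformly via compactness of $Q$ and continuity of $\mu\mapsto\mu(P_j)$. Then let $E_0$ be the group of smallest mass, and use Lemma~\ref{lem-quasi}(1) to compare the remaining groups with $E_0$.

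If that fails, the fallback is to allow columns of varying length $\ge L$. In that case I would iterate Step 2, each time absorbing a further remainder whose capacity is reduced by a fixed factor. To make this terminate, I would rely on the strict inequality $\widehat R<\widehat{E_0}$ being open in the sup-norm on $\Aff(Q)$, so that finitely many steps suffice.
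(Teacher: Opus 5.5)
Your reduction is sound. By Proposition~\ref{prop-homog}, asking for levels $g_{i,j}(C_{i,0})$ with $g_{i,j}\in\mathcal H_Q$ is the same as asking for $Q$-equivalent levels. Your Step~2 (placing $R_0'\subset E_0$ with $R_0'\sim_Q R$ and transporting it by $g_k$) correctly turns Step~1 into a two-column $L$-dividing partition.

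The gap is Step~1, which carries the entire content of the lemma. It asks for a clopen $E_0$ with $\widehat C/(L+1)<\widehat{E_0}<\widehat C/L$ pointwise on $Q$. Together with the copies, this is essentially Proposition~\ref{prop-uniform-R} with $N=L$, which the paper derives \emph{from} this lemma. Your proposed mechanism is to split small atoms into $L$ groups, with the error ``controlled uniformly via compactness of $Q$ and continuity of $\mu\mapsto\mu(P_j)$''. This does not work. Concentration holds for each fixed $\mu$, but a union bound over $Q$ needs a net for the metric $\sum_j|\mu(P_j)-\nu(P_j)|$. As the atoms shrink, this metric tends to total variation on $C$, and $Q$ need not be totally bounded in total variation (e.g.\ if it has infinitely many mutually singular extreme points).

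In fact no argument using only compactness and continuity can succeed, because it would apply to every compact convex $Q\subset M_{fc}(X)$. Let $\lambda$ be the $(\tfrac12,\tfrac12)$-Bernoulli measure on $\{0,1\}^{\N}$ and $Q=\{f\lambda:\tfrac12\le f\le 2,\ \int f\,d\lambda=1\}$. One checks that $\sup_{\mu\in Q}\mu(E)<\tfrac12$ forces $\lambda(E)<\tfrac14$. Then the density equal to $\tfrac12$ on $E$ (renormalized off $E$) gives $\mu(E)=\lambda(E)/2<\tfrac18$. So no clopen $E$ satisfies $\tfrac13<\mu(E)<\tfrac12$ on $Q$, and Step~1 fails for $C=X$, $L=2$. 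Any proof of Step~1 must therefore use the subset condition in an essential way. Your fallback of iterating Step~2 with columns of varying length still presupposes an initial family with $\widehat R<\widehat{E_0}$, so it does not supply this.

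The missing idea is to avoid dividing $\widehat C$ globally at all. The paper brings in the subset condition through Lemma~\ref{lem-minimal}. Since $\mathcal H_Q$ acts minimally, every $x\in C$ has a clopen neighborhood $U_x$ and elements $h_0=\mathrm{id},h_1,\dots,h_{L-1}\in\mathcal H_Q$ such that $h_0(U_x),\dots,h_{L-1}(U_x)$ are pairwise disjoint subsets of $C$. This is a local one-column tower. Finitely many such towers cover $C$. They are merged by refining bases and appending the overlapping pieces $k_jk_{j_0}^{-1}(F)$ as new levels, which yields columns of \emph{varying} heights $\ge L$. Equivalence of the levels within a column is automatic from the group elements, and no proportionality between column bases and $\widehat C$ is ever needed. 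The uniform near-equidivision you are after in Step~1 appears only afterwards, in Proposition~\ref{prop-uniform-R}, by cutting each column into blocks of height $N$.
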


\begin{proof}
We use two elementary observations.

First, an $L$-dividing partition of a clopen set $C$ may be refined
compatibly with any prescribed finite clopen partition
\[
C=P_1\sqcup\cdots\sqcup P_t.
\]
Indeed, suppose
\[
\mathcal C_i=(C_{i,0},\ldots,C_{i,m_i-1})
\]
is one of the columns, with
\[
C_{i,j}=g_{i,j}(C_{i,0}).
\]
On the base $C_{i,0}$ consider the finite family of clopen sets
\[
g_{i,j}^{-1}(P_\ell\cap C_{i,j}),
\qquad
0\leq j<m_i,\quad 1\leq\ell\leq t,
\]
and let $\mathcal R_i$ be the finite clopen partition of $C_{i,0}$
generated by this family.  For every atom $E\in\mathcal R_i$, replace
the original column by
\[
(E,g_{i,1}(E),\ldots,g_{i,m_i-1}(E)).
\]
The resulting columns still have height $m_i\geq L$, and every one of
their levels is contained in some $P_\ell$.  Carrying this out for all
columns gives the desired refinement.

Second, if clopen sets $C$ and $D$ admit $L$-dividing partitions, then so does
$C\cup D$.  By induction on the number of columns in a partition of $D$, it
is enough to treat the case in which $D$ is one column
\[
D=V_0\sqcup\cdots\sqcup V_{m-1},
\qquad V_j=k_j(V_0),\quad k_j\in \mathcal{H}_Q,
\]
with $m\geq L$ and $k_0$ the identity.

Partition $V_0$ by the finite family of clopen sets
$k_j^{-1}(C)$, $0\leq j<m$.  Thus, for every atom $E$ of this partition and
every $j$, the set $k_j(E)$ is either contained in $C$ or disjoint from $C$.
Refine the given $L$-dividing partition of $C$ so that every $k_j(E)$ which
is contained in $C$ is a union of atoms of the refined partition.

If none of the sets $k_j(E)$ is contained in $C$, then
\[
(k_0(E),\ldots,k_{m-1}(E))
\]
is a new column, disjoint from $C$.

Refine the given $L$-dividing partition of $C$ so that every set
$k_j(E)$ which is contained in $C$ is a union of levels of the refined
dividing partition:

Suppose that $k_{j_0}(E)\subset C$.  Write
\[
k_{j_0}(E)=F_1\sqcup\cdots\sqcup F_t,
\]
where each $F_\nu$ is a level of the refined $L$-dividing partition of
$C$.  For each such level $F_\nu$ and each $j$ for which
$k_j(E)\cap C=\emptyset$, append
\[
k_jk_{j_0}^{-1}(F_\nu)
\]
as a new level to the column containing $F_\nu$.

These added levels are
pairwise disjoint, are disjoint from $C$, and are $\mathcal{H}_Q$-equivalent to $F$.
Carrying this out for every atom $E$ produces an $L$-dividing partition of
$C\cup D$.

We now prove the lemma.  Let $x\in C$.  By
the minimality claim, Lemma \ref{lem-minimal}, the orbit $\mathcal{H}_Qx$ is dense and therefore contains
at least $L$ distinct points of $C$.  Choose
$h_0,\ldots,h_{L-1}\in \mathcal{H}_Q$, with $h_0$ the identity, so that the points
$h_j(x)$ are distinct and belong to $C$.  There is a clopen neighborhood
$U_x$ of $x$ such that the sets
\[
h_0(U_x),\ldots,h_{L-1}(U_x)
\]
are pairwise disjoint and contained in $C$.  Their union is a clopen
neighborhood of $x$ admitting a one-column $L$-dividing partition.
Compactness gives finitely many such neighborhoods covering $C$, and repeated
application of the union observation gives an $L$-dividing partition of
$C$.
\end{proof}

\begin{prop}[Boolean Rokhlin lemma]\label{prop-uniform-R}
Let $Q$ generate a proper, good geometric AEM, and let $A$ be a clopen set.
For any integer $N\geq1$ and any $\epsilon>0$, there exist mutually disjoint
clopen sets $B_1,\ldots,B_N\subset A$ such that
$B_i\sim_Q B_1$ for all $i$, and such that the remainder
$R=A\setminus\bigcup_{i=1}^N B_i$
satisfies
\[
\capp(R)=\sup_{\mu\in Q}\mu(R)<\epsilon.
\]
\end{prop}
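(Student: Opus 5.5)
We will imitate the proof of Theorem~\ref{thm-UR}, with the Kakutani skyscraper replaced by an $L$-dividing partition from Lemma~\ref{lem-dividing}, and the iterates $T^j$ replaced by elements of $\mathcal H_Q$.

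First fix $L$ large, say $L>N/\epsilon$ (so that $N/L<\epsilon$), and take an $L$-dividing partition of $A$:
\[
A=\bigsqcup_i\bigsqcup_{j=0}^{m_i-1} C_{i,j},\qquad m_i\ge L,\qquad C_{i,j}=g_{i,j}(C_{i,0}),\quad g_{i,j}\in\mathcal H_Q .
\]
Since each $g_{i,j}$ preserves every $\mu\in Q$, we have $\mu(C_{i,j})=\mu(C_{i,0})$ for all $j$. Hence
\[
m_i\,\mu(C_{i,0})=\mu\Big(\bigsqcup_j C_{i,j}\Big)\le 1,
\qquad\text{so}\qquad \mu(C_{i,0})\le \frac{1}{m_i}\le\frac1L .
\]
Moreover the levels of column $i$ are pairwise $Q$-equivalent, as in the proof of Theorem~\ref{thm-UR}.

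Next, in each column write $m_i=q_iN+r_i$ with $0\le r_i<N$, and distribute the first $q_iN$ levels among the $N$ sets:
\[
B_s=\bigsqcup_i\ \bigsqcup_{t=0}^{q_i-1} C_{i,\,tN+s-1},\qquad s=1,\dots,N .
\]
Each $B_s$ is clopen, the $B_s$ are mutually disjoint, and each lies in $A$. Every $B_s$ takes exactly $q_i$ levels from column $i$, and all levels of a column have the same value of $\widehat{\cdot}$. Therefore
\[
\widehat{B_s}=\sum_i q_i\,\widehat{C_{i,0}}
\]
is independent of $s$, so $B_s\sim_Q B_1$. (Here $\sim_Q$ only asks for equality of evaluations, which is all the statement requires.)

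Finally, the remainder $R$ is the union of the top $r_i<N$ levels of each column. For every $\mu\in Q$,
\[
\mu(R)\le\sum_i (N-1)\mu(C_{i,0})\le \frac{N-1}{L}\sum_i m_i\,\mu(C_{i,0})=\frac{N-1}{L}\,\mu(A)\le\frac{N-1}{L}<\epsilon .
\]
Here the middle inequality uses $m_i\ge L$, and the equality holds because the columns partition $A$. This bound is uniform in $\mu$, so taking the supremum gives $\capp(R)<\epsilon$.

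The substantive step is the existence of the dividing partition with equivalent levels, which is exactly Lemma~\ref{lem-dividing}. That lemma rests on Lemma~\ref{lem-minimal} and Proposition~\ref{prop-homog}, and through them on goodness and properness. I expect the only care point to be the uniformity in $\mu$, and it is handled by the displayed estimate, because the bound $\mu(C_{i,0})\le 1/m_i$ holds simultaneously for every $\mu\in Q$. If $A=\emptyset$ the statement is trivial.
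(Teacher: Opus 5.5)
Your proposal is correct and is essentially the paper's proof. It uses the same choice $L>N/\epsilon$, the same $L$-dividing partition from Lemma~\ref{lem-dividing}, the same interleaved assignment of levels $C_{i,tN+s-1}$ to $B_s$, and the same uniform estimate $\mu(R)\le\frac{N-1}{L}\mu(A)<\epsilon$; the auxiliary bound $\mu(C_{i,0})\le 1/m_i$ you record is never actually used, since the uniformity in $\mu$ comes from $m_i\ge L$ alone.
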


\begin{proof}
The assertion is immediate when $A=\emptyset$.  Choose an integer
$L>N/\epsilon$, and let
\[
\mathcal C_i=(C_{i,0},\ldots,C_{i,m_i-1})
\]
be an $L$-dividing partition of $A$, supplied by
the dividing-partition claim.  For each $i$, write
\[
m_i=Nq_i+r_i,
\qquad 0\leq r_i<N.
\]
For $1\leq s\leq N$, define
\[
B_s=\bigsqcup_i\ \bigsqcup_{t=0}^{q_i-1}C_{i,Nt+s-1},
\]
and let $R$ be the union of the remaining $r_i$ levels in each column.
The sets $B_1,\ldots,B_N,R$ form a clopen partition of $A$.

All levels in the $i$-th column are $Q$-equivalent.  Hence, for every
$1\leq s\leq N$,
\[
\widehat{B_s}=\sum_i q_i\widehat{C_{i,0}},
\]
so $B_s\sim_Q B_1$.

Finally, for every $\mu\in Q$,
\[
\mu(R)=\sum_i r_i\mu(C_{i,0})
\leq (N-1)\sum_i\mu(C_{i,0}).
\]
Since $m_i\geq L$,
\[
\mu(A)=\sum_i m_i\mu(C_{i,0})
\geq L\sum_i\mu(C_{i,0}),
\]
and therefore
\[
\mu(R)\leq\frac{N-1}{L}\mu(A)
\leq\frac{N-1}{L}<\epsilon.
\]
Taking the supremum over $\mu\in Q$ completes the proof.
\end{proof}

\br

\section{The ordered group \texorpdfstring{$G_Q$}{GQ}}\label{sec-DG}

For $f\in C(X,\Z)$ define its {\em evaluation profile} by
\[
\widehat f(\mu)=\int_X f\,d\mu,
\qquad \mu\in Q,
\]
and let
\[
N_Q
=
\left\{
f\in C(X,\Z):
\widehat f(\mu)=0
\text{ for every }\mu\in Q
\right\}.
\]
We define
\[
G_Q=C(X,\Z)/N_Q,
\qquad
u=[\mathbf 1_X].
\]

If $g=[f]\in G_Q$, then
\[
\widehat g:=\widehat f
\]
is well defined. Equivalently, the evaluation homomorphism identifies
$G_Q$ with the countable subgroup
\[
V_Q
=
\left\{
\widehat f:f\in C(X,\Z)
\right\}
\subset \Aff(Q).
\]

In particular, for clopen sets $A,B\subset X$,
\[
[\mathbf 1_A]=[\mathbf 1_B]
\quad\Longleftrightarrow\quad
\widehat A=\widehat B.
\]
Thus equality in $G_Q$ records exactly the geometric equivalence of
clopen sets induced by the AEM.

We equip $G_Q$ with the strict pointwise positive cone
\[
G_Q^+
=
\{0\}
\cup
\left\{
g\in G_Q:
\widehat g(\mu)>0
\text{ for every }\mu\in Q
\right\}.
\]
Accordingly,
\[
g<h
\quad\Longleftrightarrow\quad
\widehat g(\mu)<\widehat h(\mu)
\text{ for every }\mu\in Q.
\]

\begin{prop}\label{prop-basic-GQ}
The triple $(G_Q,G_Q^+,u)$ is a countable, simple, unperforated
ordered abelian group with order unit.
\end{prop}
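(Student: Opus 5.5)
The plan is to check the axioms one at a time, working throughout through the evaluation identification $G_Q\cong V_Q\subset\Aff(Q)$, $[f]\mapsto\widehat f$. This map is well defined and injective by the definition of $N_Q$. Once it is in place, every order statement becomes a statement about pointwise strict positivity of continuous affine functions on the compact set $Q$.

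\emph{Countability and group structure.} $C(X,\Z)$ is countable, because each $f$ is a finite $\Z$-combination of indicators of clopen sets and $\CO(X)$ is countable. Hence the quotient $G_Q$ is countable and abelian. The positive cone needs three checks.
\begin{itemize}
\item $G_Q^++G_Q^+\subset G_Q^+$, since sums of strictly positive functions are strictly positive.
\item $G_Q^+\cap(-G_Q^+)=\{0\}$, since a nonzero element cannot be both strictly positive and strictly negative on the nonempty set $Q$.
\item $G_Q^+-G_Q^+=G_Q$, which will follow from the order-unit property.
\end{itemize}

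\emph{Order unit.} Note that $\widehat u\equiv 1$. Given $g=[f]$, the function $f$ is bounded, say $|f|\le n$. Then $\widehat{nu-g}(\mu)=\int (n-f)\,d\mu\ge 0$, and $\widehat{(n+1)u-g}\ge 1>0$ on all of $Q$. So $g<(n+1)u$, and $u$ is an order unit. Writing $g=(n+1)u-((n+1)u-g)$ then exhibits $g$ as a difference of positive elements.

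\emph{Unperforation.} Suppose $ng\in G_Q^+$ with $n\ge1$. If $ng=0$, then $n\widehat g=0$, so $\widehat g=0$ and $g=0$ by injectivity. Otherwise $n\widehat g>0$ pointwise, hence $\widehat g>0$ pointwise, and $g\in G_Q^+$.

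\emph{Simplicity.} I would show that every nonzero $g\in G_Q^+$ is an order unit; this is the only step that uses compactness of $Q$. The map $\mu\mapsto\widehat g(\mu)=\int f\,d\mu$ is weak$^*$ continuous, because $f$ is continuous. So on the compact set $Q$ it attains a minimum $\delta>0$. Given any $h=[k]$, choose $m$ with $\|k\|_\infty<m\delta$. Then $\widehat{mg-h}\ge m\delta-\|k\|_\infty>0$ on $Q$, so $h<mg$. In particular $G_Q$ has no nontrivial order ideals: an ideal containing a nonzero positive element absorbs every element dominated by one of its multiples, and here that is everything.

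I expect no real obstacle; the statement is essentially formal. The only points needing care are the injectivity of the evaluation map, which justifies reading order facts off $\widehat g$, and the weak$^*$ continuity of $\widehat g$, which lets compactness of $Q$ turn pointwise positivity into a uniform lower bound. None of this uses goodness, properness, or the simplex property; those are reserved for Riesz interpolation later.
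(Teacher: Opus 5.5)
Your proposal is correct and follows essentially the same route as the paper. Both arguments identify $G_Q$ with $V_Q\subset\Aff(Q)$, check the cone axioms, get unperforation from injectivity of evaluation, and get simplicity from the uniform lower bound $\delta=\min_Q\widehat g>0$. The only differences are cosmetic: you bound $\widehat g$ by $\|f\|_\infty$ rather than via compactness of $Q$, and you dominate an arbitrary $h$ by $mg$ directly instead of first showing $u<mg$.
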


\begin{proof}
Since $X$ is a Cantor space, its Boolean algebra of clopen sets is
countable. Hence $C(X,\Z)$, and therefore also $G_Q$, is countable.

The evaluation map identifies $G_Q$ with a subgroup of $\Aff(Q)$.
Consequently, $G_Q$ is torsion free. Moreover, $G_Q^+$ is closed
under addition and
\[
G_Q^+\cap(-G_Q^+)=\{0\},
\]
so it is a positive cone.

Let $g\in G_Q$. Since $\widehat g$ is continuous on the compact set
$Q$, it is bounded. Choose $n\in\N$ such that
\[
-n<\widehat g(\mu)<n
\qquad
\text{for every }\mu\in Q.
\]
Then
\[
-nu<g<nu,
\]
and hence $u$ is an order unit.

To prove unperforation, suppose that
$ng\in G_Q^+$
for some $n\geq1$. If $ng=0$, then $g=0$, since $G_Q$ is torsion
free. Otherwise,
$
n\widehat g(\mu)>0 \ \text{for every }\mu\in Q$,
and therefore $\widehat g(\mu)>0 \ 
\text{for every }\mu\in Q$.
Thus, $g\in G_Q^+$.

Finally, let $0\neq g\in G_Q^+$. By compactness,
$\delta = \min_{\mu\in Q}\widehat g(\mu)>0$.
Choose $m\in\N$ such that $m\delta>1$. Then
$u<mg$.
Since $u$ is an order unit, it follows that $g$ is itself an order
unit. Thus every nonzero positive element of $G_Q$ is an order unit,
and $G_Q$ is simple.
\end{proof}

The Boolean Rokhlin lemma now supplies the approximate divisibility
which is needed to prove that $G_Q$ is sufficiently large inside
$\Aff(Q)$.

\begin{lem}\label{lem-Udense}
Suppose that $Q$ generates a proper, good geometric AEM. Then
\[
V_Q
=
\left\{
\widehat f:f\in C(X,\Z)
\right\}
\]
is uniformly dense in $\Aff(Q)$.
\end{lem}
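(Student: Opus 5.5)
The plan is to show that the uniform closure $\overline{V_Q}$ of $V_Q$ in $\Aff(Q)$ is a real linear subspace. Then we identify a dense family of affine functions that it contains.

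\emph{Step 1: approximate divisibility.} Fix a clopen set $A$, an integer $N\ge1$ and $\epsilon>0$. Proposition~\ref{prop-uniform-R} gives disjoint clopen sets $B_1,\dots,B_N\subset A$ with $B_i\sim_Q B_1$ and remainder $R$ satisfying $\capp(R)<\epsilon$. Then
\[
\widehat A = N\widehat{B_1}+\widehat R ,
\]
so
\[
\sup_{\mu\in Q}\Bigl|\widehat{B_1}(\mu)-\tfrac1N\widehat A(\mu)\Bigr|\le \tfrac{\epsilon}{N}.
\]
Hence $\tfrac1N\widehat A\in\overline{V_Q}$. Every $f\in C(X,\Z)$ is a finite integer combination of indicators of clopen sets, so $\tfrac1N\widehat f\in\overline{V_Q}$ for all $f\in C(X,\Z)$ and all $N$.

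\emph{Step 2: $\overline{V_Q}$ is a real subspace.} The set $\overline{V_Q}$ is a closed additive subgroup of the Banach space $\Aff(Q)$. By Step 1 and continuity of $g\mapsto g/N$, the set of $g$ with $g/N\in\overline{V_Q}$ is closed and contains $V_Q$, so it contains $\overline{V_Q}$. Thus $\overline{V_Q}$ is a $\mathbb Q$-vector space. Since it is closed, it is also closed under real scalars. It therefore contains every function $\mu\mapsto\int f\,d\mu$ with $f$ a real linear combination of clopen indicators. Such $f$ are uniformly dense in $C(X,\R)$, because $X$ is zero-dimensional. The inequality $|\int f\,d\mu-\int g\,d\mu|\le\|f-g\|_\infty$ then gives
\[
\bigl\{\mu\mapsto \textstyle\int f\,d\mu : f\in C(X,\R)\bigr\}\subset\overline{V_Q}.
\]

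\emph{Step 3: density in $\Aff(Q)$.} It remains to see that these functions, together with the constants (already included via $f=\mathbf 1_X$), are dense in $\Aff(Q)$. The space $Q$ is a compact convex subset of the locally convex space $C(X)^*$ with the weak$^*$ topology. The weak$^*$-continuous linear functionals on $C(X)^*$ are exactly $\mu\mapsto\int f\,d\mu$ with $f\in C(X)$. The standard result (e.g.\ Phelps/Alfsen: restrictions to $Q$ of continuous affine functions of the form $\ell+c$ are uniformly dense in $\Aff(Q)$) then finishes the proof.

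If one wants to avoid citing this result, one can argue by Hahn--Banach. A bounded functional on $\Aff(Q)$ that vanishes on all such restrictions is represented, via $C(Q)$, by a difference of positive measures. These correspond to two measures on $Q$ with the same barycenter data, hence with the same mass and the same barycenter. Such a functional therefore kills every continuous affine function.

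The main obstacle is Step 1, which is where goodness and properness enter. It has already been settled by the Boolean Rokhlin lemma, so the remaining work is the soft functional-analytic Step 3.
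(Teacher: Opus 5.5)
Your proposal is correct and follows essentially the same route as the paper. Both arguments use the Boolean Rokhlin lemma to get $\frac1N\widehat A\in\overline{V_Q}$, pass to real multiples by closedness, use density of locally constant functions in $C(X,\mathbb R)$, and prove density of $\{\mu\mapsto\int\varphi\,d\mu\}$ in $\Aff(Q)$ by the same Hahn--Banach/barycenter argument (which the paper writes out rather than citing Alfsen). The differences are only cosmetic: you do the steps in the reverse order, and you phrase the scalar step as divisibility of the closed subgroup $\overline{V_Q}$, whereas the paper handles rational and real multiples of each $\widehat A$ separately and then sums.
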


\begin{proof}
We first show that the functions
\[
a_\varphi(\mu)
=
\int_X\varphi\,d\mu,
\qquad
\varphi\in C(X,\R),
\]
have uniformly dense span in $\Aff(Q)$.

Suppose otherwise. By the Hahn--Banach theorem, there would exist a
nonzero continuous linear functional
\[
\Lambda:\Aff(Q)\longrightarrow\R
\]
which vanishes on every $a_\varphi$. Extending $\Lambda$ to $C(Q)$
and applying the Riesz representation theorem, we obtain a finite
signed measure $\lambda$ on $Q$ such that
\[
\Lambda(a)=\int_Q a(\mu)\,d\lambda(\mu)
\qquad
\text{for every }a\in\Aff(Q).
\]

For every $\varphi\in C(X,\R)$ we then have
\[
0
=
\int_Q\left(\int_X\varphi\,d\mu\right)d\lambda(\mu).
\]
Taking $\varphi=\mathbf 1_X$ shows that $\lambda(Q)=0$.

Write the Jordan decomposition as
\[
\lambda=\lambda^+-\lambda^-.
\]
The measures $\lambda^+$ and $\lambda^-$ have the same total mass,
say $c$. If $c=0$, then $\lambda=0$, contrary to the choice of
$\Lambda$. Thus $c>0$.

Let $\mu_+,\mu_-\in Q$ be the barycenters of
$c^{-1}\lambda^+$ and $c^{-1}\lambda^-$, respectively. The preceding
identity gives
\[
c\int_X\varphi\,d\mu_+
=
c\int_X\varphi\,d\mu_-
\]
for every $\varphi\in C(X,\R)$. Hence $\mu_+=\mu_-$.

For every $a\in\Aff(Q)$, the barycenter identity now gives
\[
\Lambda(a)
=
c\,a(\mu_+)-c\,a(\mu_-)
=
0,
\]
contradicting the fact that $\Lambda$ is nonzero. Therefore the
functions $a_\varphi$ are uniformly dense in $\Aff(Q)$.

Since $X$ is zero-dimensional, the locally constant real-valued
functions are uniformly dense in $C(X,\R)$. It follows that the real
linear span of
\[
\left\{
\widehat A:A\subset X\text{ clopen}
\right\}
\]
is uniformly dense in $\Aff(Q)$.

It remains to prove that every real multiple of every clopen
evaluation function belongs to the closure of $V_Q$.

Fix a clopen set $A\subset X$, an integer $N\geq1$, and
$\epsilon>0$. By the Boolean Rokhlin lemma, Proposition
\ref{prop-uniform-R}, there exist mutually disjoint clopen sets
\[
B_1,\ldots,B_N\subset A
\]
such that
$B_i\sim_Q B_1 \ (1\leq i\leq N)$,
and such that
$R = A\setminus\bigsqcup_{i=1}^N B_i$
satisfies
$\sup_{\mu\in Q}\mu(R)<N\epsilon$.

Since the $B_i$ are $Q$-equivalent,
\[
\widehat A
=
N\widehat{B_1}+\widehat R.
\]
Consequently,
\[
\left\|
\widehat{B_1}
-
\frac1N\widehat A
\right\|_\infty
=
\frac1N\|\widehat R\|_\infty
<
\epsilon.
\]
Since $\widehat{B_1}\in V_Q$, this proves that
\[
\frac1N\widehat A\in\overline{V_Q}.
\]

It follows that every rational multiple of $\widehat A$ belongs to
$\overline{V_Q}$. By approximation of real numbers by rationals, every
real multiple of $\widehat A$ also belongs to $\overline{V_Q}$.
Therefore $\overline{V_Q}$ contains the real linear span of all
clopen evaluation functions. Hence
\[
\overline{V_Q}=\Aff(Q).
\]
\end{proof}

\begin{thm}\label{thm-GQ-dimension-group}
Suppose that $Q$ generates a proper, good geometric AEM. Then
$(G_Q,G_Q^+,u)$ is a simple dimension group. Moreover, its normalized
state space is canonically affinely homeomorphic to $Q$.
\end{thm}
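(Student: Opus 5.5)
We use Proposition~\ref{prop-basic-GQ}, which says that $(G_Q,G_Q^+,u)$ is already a countable, simple, unperforated ordered group with order unit. By the Effros--Handelman--Shen theorem \cite{EHS-80}, the only thing left for the dimension-group assertion is the Riesz interpolation property. The plan is to realize $G_Q$ as the dense subgroup $V_Q\subset\Aff(Q)$ with the strict order, and to derive interpolation from two facts: interpolation in $\Aff(Q)$ (this is where $Q$ being a Choquet simplex is used) and density of $V_Q$ (Lemma~\ref{lem-Udense}).

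\textbf{Interpolation.} Let $a_1,a_2\le b_1,b_2$ in $G_Q$. We must find $c\in G_Q$ with $a_i\le c\le b_j$, and we split into cases. If some $a_i=b_j$, say $a_1=b_1$, take $c=a_1$; then $a_2\le b_1=c$ and $c=a_1\le b_2$. Otherwise $\widehat{a_i}<\widehat{b_j}$ pointwise on $Q$ for all $i,j$. Put $\alpha=\max(\widehat{a_1},\widehat{a_2})$ and $\beta=\min(\widehat{b_1},\widehat{b_2})$. These are continuous, $\alpha$ is convex, $\beta$ is concave, and $\alpha<\beta$ on $Q$. By compactness there is $\delta>0$ with $\beta-\alpha>3\delta$. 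Apply Edwards' separation theorem on the Choquet simplex $Q$ to the convex function $\alpha+\delta$ and the concave function $\beta-\delta$. This gives $a\in\Aff(Q)$ with $\alpha+\delta\le a\le\beta-\delta$. Equivalently, apply Riesz interpolation in $\Aff(Q)$ to the functions $\widehat{a_i}+\delta$ and $\widehat{b_j}-\delta$. By Lemma~\ref{lem-Udense}, choose $c\in G_Q$ with $\|\widehat c-a\|_\infty<\delta/2$. Then $\widehat{a_i}<\widehat c<\widehat{b_j}$ pointwise, so $a_i<c<b_j$ in the strict order. Hence $G_Q$ is a simple dimension group.

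\textbf{State space.} Each $\mu\in Q$ gives the normalized state $s_\mu(g)=\widehat g(\mu)$. It is positive because elements of $G_Q^+$ have positive profile, and $s_\mu(u)=1$. The map $\mu\mapsto s_\mu$ is affine and weak$^*$-continuous. It is injective because the values $s_\mu([\mathbf 1_A])=\mu(A)$ on clopen sets $A$ determine $\mu$. For surjectivity, let $s$ be a normalized state. We show $s$ is continuous for the sup-norm on $V_Q$: if $\|\widehat g\|_\infty<1/n$ then $-u<ng<u$, so $|s(g)|\le 1/n$. Hence $s$ extends uniquely to a positive, normalized, bounded functional on $\overline{V_Q}=\Aff(Q)$; positivity survives the extension by approximating from above with strictly positive elements. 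A positive normalized functional on $\Aff(Q)$ is evaluation at a point of $Q$. To see this, extend it to $C(Q)$ as a state (Hahn--Banach together with positivity), represent it by a probability measure on $Q$, and take the barycenter $\mu$. Thus $s=s_\mu$. The bijection is a continuous map between compact Hausdorff spaces, so it is an affine homeomorphism.

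\textbf{Main obstacle.} The expected difficulty is the interpolation step, specifically passing from non-strict to strict inequalities. $G_Q^+$ is the strict cone, so an interpolant $c$ must satisfy either $c=a_i$ or strict inequality; the case split handles this. The strict case then needs the simplex separation theorem with a uniform margin $\delta$, so that the density approximation from Lemma~\ref{lem-Udense} stays strictly between the bounds.
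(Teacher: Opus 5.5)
Your proposal is correct and follows essentially the same route as the paper. Both arguments split off the case $a_i=b_j$, take a uniform margin from compactness, interpolate in $\Aff(Q)$, and approximate via Lemma~\ref{lem-Udense}. Your appeal to Edwards' separation theorem is equivalent to the paper's direct use of Riesz interpolation in $\Aff(Q)$. The state-space part uses the same continuity--extension--barycenter argument, and you additionally spell out the compact-Hausdorff step that makes the bijection a homeomorphism.
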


\begin{proof}
By Proposition \ref{prop-basic-GQ}, the group is countable, simple,
and unperforated, and $u$ is an order unit. It remains to establish
the Riesz interpolation property.

Let
\[
x_1,x_2,y_1,y_2\in G_Q
\]
satisfy
\[
x_i\leq y_j
\qquad
(i,j\in\{1,2\}).
\]

If $x_i=y_j$ for some pair $(i,j)$, then that common element is an
interpolant. Indeed, the other lower bound is below $y_j$, and the
common element $x_i$ is below the other upper bound.

We may therefore assume that all four inequalities are strict. Set
\[
\rho
=
\min_{\substack{\mu\in Q\\ i,j\in\{1,2\}}}
\left(
\widehat y_j(\mu)-\widehat x_i(\mu)
\right).
\]
By compactness of $Q$, we have $\rho>0$.

Since $Q$ is a Choquet simplex, $\Aff(Q)$ has the Riesz interpolation
property. Hence there exists $a\in\Aff(Q)$ such that
\[
\widehat x_i+\frac{\rho}{3}
\leq a
\leq
\widehat y_j-\frac{\rho}{3}
\qquad
(i,j\in\{1,2\}).
\]

By Lemma \ref{lem-Udense}, choose $z\in G_Q$ such that
\[
\|\widehat z-a\|_\infty<\frac{\rho}{3}.
\]
It follows that
\[
\widehat x_i(\mu)
<
\widehat z(\mu)
<
\widehat y_j(\mu)
\]
for every $\mu\in Q$ and every $i,j\in\{1,2\}$. By the definition of
the order on $G_Q$,
\[
x_i\leq z\leq y_j
\qquad
(i,j\in\{1,2\}).
\]
Thus $G_Q$ has the Riesz interpolation property and is therefore a
simple dimension group.

It remains to identify the normalized state space. For each
$\mu\in Q$, define
\[
s_\mu:G_Q\longrightarrow\R,
\qquad
s_\mu(g)=\widehat g(\mu).
\]
This is a normalized positive group homomorphism, since
$s_\mu(u)=1$.

Thus, we obtain an affine continuous map
\[
Q\longrightarrow S(G_Q,u),
\qquad
\mu\longmapsto s_\mu.
\]
It is injective because two Borel probability measures on the Cantor
space which agree on every clopen set are equal.

Conversely, let $s\in S(G_Q,u)$. We first show that $s$ is continuous
with respect to the uniform norm inherited from $\Aff(Q)$. 

If $\|\widehat g\|_\infty<1/n$,
then $-u<ng<u$.
Positivity of $s$ gives
$-1\leq ns(g)\leq 1$,
and hence
$|s(g)|\leq\frac1n$.

Thus, $s$ is uniformly continuous and, by Lemma \ref{lem-Udense},
extends uniquely to a continuous linear functional
\[
\widetilde s:\Aff(Q)\longrightarrow\R.
\]

The extension is positive. Indeed, suppose that
$a\in\Aff(Q)$ satisfies $a\geq0$. For every $\epsilon>0$, choose
$g_\epsilon\in G_Q$ such that
\[
\left\|
\widehat{g_\epsilon}
-
(a+\epsilon\mathbf 1)
\right\|_\infty
<
\frac{\epsilon}{2}.
\]
Then,
\[
\widehat{g_\epsilon}(\mu)>\frac{\epsilon}{2}
\qquad
\text{for every }\mu\in Q,
\]
so $g_\epsilon\in G_Q^+$ and therefore
$s(g_\epsilon)\geq0$. Letting $\epsilon\downarrow0$ gives
$\widetilde s(a)\geq 0 $.
Also,
$\widetilde s(\mathbf 1) = 1$.

A normalized positive linear functional on $\Aff(Q)$ is evaluation
at a unique point of $Q$. Indeed, extend $\widetilde s$ positively to
$C(Q)$, represent the extension by a probability measure on $Q$, and
take its barycenter. Thus there exists a unique $\mu\in Q$ such that
\[
\widetilde s(a)=a(\mu)
\qquad
\text{for every }a\in\Aff(Q).
\]
In particular,
\[
s(g)=\widehat g(\mu)=s_\mu(g)
\qquad
\text{for every }g\in G_Q.
\]
Therefore every normalized state is of the form $s_\mu$, and
$Q\cong S(G_Q,u)$ canonically as compact convex sets.
\end{proof}

\section{The clopen scale}\label{subsec-clopen-scale}

For the realization argument we need one additional fact: the order interval
$[0,u]$ in $G_Q$ consists exactly of classes of clopen sets.  We prove this
concretely, using only the elementary relations between clopen sets with the
same $Q$-evaluation profile.  The same argument will also show that these
relations generate the full kernel $N_Q$ of the evaluation map.

Let
\[
J_Q=
\left\langle
\mathbf 1_A-\mathbf 1_B:
A,B\in \CO(X),\ A\sim_QB
\right\rangle
\subset C(X,\Z),
\]
and set
\[
G_Q^J:=C(X,\Z)/J_Q,
\qquad
u_J:=[\mathbf 1_X]_{J_Q}.
\]
We equip $G_Q^J$ with the cone
\begin{equation}\label{eq-GJ-cone}
(G_Q^J)^+
=
\{[f]_{J_Q}:f\in C(X,\Z),\ f\geq0\}.
\end{equation}
Since $J_Q\subseteq N_Q$, there is a canonical surjective homomorphism
\begin{equation}\label{eq-qJ}
q_J:G_Q^J\longrightarrow G_Q,
\qquad
q_J([f]_{J_Q})=[f]_{N_Q}.
\end{equation}

\begin{lem}\label{lem-GJ-order}
The set in \eqref{eq-GJ-cone} is a positive cone on $G_Q^J$, and $u_J$ is an
order unit.  Moreover, the map $q_J$ in \eqref{eq-qJ} is positive.
\end{lem}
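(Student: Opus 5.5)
The plan is to verify the three cone axioms for $(G_Q^J)^+$ directly, then check the order-unit property, and finally positivity of $q_J$. Closure under addition is immediate: if $f,g\geq0$ then $f+g\geq0$, so $[f]_{J_Q}+[g]_{J_Q}\in(G_Q^J)^+$. Likewise $0=[0]_{J_Q}$ lies in the cone. The nontrivial axiom is
\[
(G_Q^J)^+\cap\bigl(-(G_Q^J)^+\bigr)=\{0\}.
\]
For this I would pass through the evaluation profile. Every element of $J_Q$ lies in $N_Q$, so $\widehat{f}$ is well defined on classes in $G_Q^J$. Suppose $[f]_{J_Q}=-[g]_{J_Q}$ with $f,g\geq0$. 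Then $f+g\in J_Q\subseteq N_Q$, so $\int(f+g)\,d\mu=0$ for every $\mu\in Q$. Since $f+g\geq0$ is continuous and every $\mu\in Q$ has full support (properness), this forces $f+g\equiv0$. Hence $f=g=0$ and $[f]_{J_Q}=0$.

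For the order unit, take any $f\in C(X,\Z)$. Since $X$ is compact, $f$ is bounded, say $|f|\leq n$. Then $n\mathbf 1_X-f\geq0$ and $f+n\mathbf 1_X\geq0$, which gives $-nu_J\leq[f]_{J_Q}\leq nu_J$ in $G_Q^J$. So $u_J$ is an order unit.

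For positivity of $q_J$, let $f\geq0$. If $f=0$ the image is $0\in G_Q^+$. Otherwise $f\geq\mathbf 1_V$ for some nonempty clopen $V$, so by full support $\widehat f(\mu)\geq\mu(V)>0$ for all $\mu\in Q$, and hence $[f]_{N_Q}\in G_Q^+$. Since $q_J$ is a homomorphism, positivity means exactly that it maps the cone into the cone, which is what we just checked.

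The only step needing care is the antisymmetry $(G_Q^J)^+\cap(-(G_Q^J)^+)=\{0\}$. There the key point is to use $J_Q\subseteq N_Q$ together with properness of $Q$ to rule out a nonzero nonnegative function that becomes trivial modulo $J_Q$. Everything else is formal.
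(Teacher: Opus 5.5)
Your proof is correct and follows the paper's argument essentially step by step: antisymmetry via $f+g\in J_Q\subseteq N_Q$ plus full support, the order unit via $n\mathbf 1_X\pm f\geq 0$, and positivity of $q_J$ by taking a nonnegative representative and using full support to get a strictly positive integral. Your bound $f\geq\mathbf 1_V$ with $V=\{f\geq 1\}$ simply spells out the paper's one-line appeal to full support.
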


\begin{proof}
The cone is clearly closed under addition.  Suppose that
\[
[f]_{J_Q}\in (G_Q^J)^+\cap -(G_Q^J)^+.
\]
Then there are $p,q\in C(X,\Z)$ with $p,q\geq0$ such that
\[
[f]_{J_Q}=[p]_{J_Q}=-[q]_{J_Q}.
\]
Hence $p+q\in J_Q\subseteq N_Q$.  Therefore
\[
\int_X(p+q)\,d\mu=0
\qquad(\mu\in Q).
\]
Since every measure in $Q$ has full support and $p+q\geq0$, this forces
$p+q=0$, and hence $p=q=0$.  Thus the cone is proper.

If $f\in C(X,\Z)$ and $n\geq\|f\|_\infty$, then
$n\mathbf 1_X\pm f\geq0$, so
$-n u_J\leq[f]_{J_Q}\leq n u_J$.
Thus, $u_J$ is an order unit.

Finally, if $[f]_{J_Q}\geq0$, choose $p\geq0$ with
$[f]_{J_Q}=[p]_{J_Q}$.  If $p=0$, then $q_J([f]_{J_Q})=0$.  If $p\neq0$,
full support gives
\[
\int_Xp\,d\mu>0
\qquad(\mu\in Q),
\]
so $q_J([f]_{J_Q})$ is strictly positive in $G_Q$.  Hence $q_J$ is positive.
\end{proof}

We next record a measure-theoretic consequence of Boolean homogeneity.  Put
\[
M_{\mathcal H_Q}(X)
=
\{\nu\in M(X):h_*\nu=\nu\text{ for every }h\in\mathcal H_Q\}.
\]

\begin{lem}[Uniform $\mathcal H_Q$-smallness]\label{lem-HQ-small}
For every $\epsilon>0$ there exists $\delta>0$ such that, for every clopen
$A\subset X$,
$\sup_{\mu\in Q}\mu(A)<\delta$ implies $\nu(A)<\epsilon$
for every $\nu\in M_{\mathcal H_Q}(X)$.
\end{lem}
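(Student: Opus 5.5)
The plan is to use Boolean homogeneity to produce many pairwise disjoint copies of $A$, all of the same $\nu$-mass. Fix $\epsilon>0$ and choose an integer $N>1/\epsilon$. We look for $\delta>0$ such that every clopen $A$ with $\capp(A)<\delta$ has $N$ pairwise disjoint clopen sets $A_1,\ldots,A_N$, each satisfying $A_k\sim_Q A$.

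Granting this, Proposition~\ref{prop-homog} gives $h_k\in\mathcal H_Q$ with $h_k(A)=A_k$. For any $\nu\in M_{\mathcal H_Q}(X)$ this yields $\nu(A_k)=\nu(A)$ for all $k$. Disjointness then gives $N\nu(A)\le 1$, so $\nu(A)\le 1/N<\epsilon$.

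The disjoint copies are built greedily using the subset condition. Take $\delta=1/N$ and suppose $A_1,\ldots,A_m$ have already been found, with $m<N$. Set $C=X\setminus(A_1\cup\cdots\cup A_m)$. For every $\mu\in Q$,
\[
\mu(C)=1-m\mu(A)>1-m\delta\ge 1-(N-1)/N=1/N>\mu(A).
\]
So $\widehat A<\widehat C$ pointwise on $Q$. The subset condition then provides a clopen $A_{m+1}\subset C$ with $A_{m+1}\sim_Q A$, which is disjoint from the earlier copies. After $N$ steps we have the required family.

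One small point needs care. The strict inequality $\mu(A)<\delta$ has to hold for every $\mu\in Q$. This is automatic, because $\capp(A)$ is attained on the compact set $Q$, so $\capp(A)<\delta$ gives a uniform bound. The case $A=\emptyset$ is trivial.

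I expect no serious obstacle. The only substantive input is Boolean homogeneity, which transfers $Q$-equivalence into genuine $\mathcal H_Q$-equivalence, and the subset condition, which supplies room for the disjoint copies. Properness is not even needed for the greedy step.
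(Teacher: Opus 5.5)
Your proof is correct, and it takes a more economical route than the paper's.

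\textbf{The paper's route.} The paper first applies the Boolean Rokhlin lemma (Proposition~\ref{prop-uniform-R}) to $X$. This gives $N$ pairwise disjoint, mutually $Q$-equivalent sets $B_1,\ldots,B_N$, and the paper sets $\delta=\min_{\mu\in Q}\mu(B_1)$. It then uses the subset condition once, to place a $Q$-copy $A'$ of $A$ inside $B_1$. Proposition~\ref{prop-homog} is applied both to $A\sim_Q A'$ and to the $B_i$, giving $\nu(A)\le\nu(B_1)\le 1/N$.

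\textbf{Your route.} You pack $N$ copies of $A$ itself into $X$ by iterating the subset condition. This is exactly Lemma~\ref{lem-finite-packing} with $A_1=\cdots=A_N=A$ and $B=X$. You then apply Boolean homogeneity only to the pairs $A\sim_Q A_k$.

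\textbf{Comparison.} Your argument bypasses the Boolean Rokhlin lemma, and with it the dividing-partition lemma and the minimality of $\mathcal H_Q$. It also yields the explicit statement $\capp(A)<1/N\Rightarrow\nu(A)\le 1/N$. By contrast, the paper's $\delta$ depends on the chosen Rokhlin sets; it lies in $(1/(2N),1/N]$ because of the remainder. The paper's detour costs little in context, since the Rokhlin lemma is needed anyway for Lemma~\ref{lem-HQ-clopenization}. Your argument shows that the smallness lemma itself needs only the subset condition and Proposition~\ref{prop-homog}.

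\textbf{Minor points.}
\begin{itemize}
\item For $m=0$, the first inequality in your display is an equality, not a strict inequality. Write instead $\mu(C)-\mu(A)=1-(m+1)\mu(A)>0$, which holds because $m+1\le N$ and $\mu(A)<1/N$.
\item The appeal to attainment of $\capp(A)$ is unnecessary. The hypothesis $\sup_{\mu\in Q}\mu(A)<\delta$ already gives $\mu(A)<\delta$ for each $\mu$, and pointwise strictness is all the subset condition requires.
\end{itemize}
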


\begin{proof}
Choose $N$ with $1/N<\epsilon$.  Apply Proposition~\ref{prop-uniform-R} to
$X$ with this $N$ and with remainder of $Q$-capacity less than $1/2$.  We
obtain pairwise disjoint, mutually $Q$-equivalent nonempty clopen sets
$B_1,\ldots,B_N$.  Put
\[
\delta=\min_{\mu\in Q}\mu(B_1)>0.
\]
If $\sup_{\mu\in Q}\mu(A)<\delta$, then
$\widehat A<\widehat{B_1}$.  The subset condition gives a clopen
$A'\subset B_1$ with $A'\sim_QA$.  By Proposition~\ref{prop-homog}, there
is $h\in\mathcal H_Q$ with $h(A)=A'$.  Hence every
$\nu\in M_{\mathcal H_Q}(X)$ satisfies
\[
\nu(A)=\nu(A')\leq\nu(B_1).
\]
Likewise, Proposition~\ref{prop-homog} shows that
\[
\nu(B_1)=\cdots=\nu(B_N).
\]
Since the $B_i$ are pairwise disjoint,
$N\nu(B_1)\leq1$,
and therefore $\nu(A)\leq1/N<\epsilon$.
\end{proof}

\begin{lem}[Clopen approximation]\label{lem-HQ-clopenization}
Let $f:X\to[0,1]$ be locally constant and let $\epsilon>0$.  There exists a
clopen set $C\subset X$ such that
\[
\left|\nu(C)-\int_Xf\,d\nu\right|<\epsilon
\]
for every $\nu\in M_{\mathcal H_Q}(X)$.
\end{lem}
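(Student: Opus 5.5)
Since $f$ is locally constant with values in $[0,1]$, we can write $f=\sum_{k=1}^m c_k\mathbf 1_{A_k}$ with $X=A_1\sqcup\cdots\sqcup A_m$ a clopen partition and $c_k\in[0,1]$. The plan is to replace each piece $c_k\mathbf 1_{A_k}$ by a clopen subset $C_k\subset A_k$ whose $\nu$-measure is within $\epsilon/m$ of $c_k\nu(A_k)$ for every $\nu\in M_{\mathcal H_Q}(X)$, and then to set $C=\bigsqcup_k C_k$. Because the $A_k$ are disjoint, the errors add to less than $\epsilon$. So it suffices to treat a single clopen set $A$ and a constant $c\in[0,1]$.

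For that single piece, let $\eta=\epsilon/m$ and first pick $\delta>0$ from Lemma~\ref{lem-HQ-small} for the tolerance $\eta/2$. Choose an integer $N$ with $1/N<\eta/2$, and apply the Boolean Rokhlin lemma, Proposition~\ref{prop-uniform-R}, to $A$ with this $N$ and remainder capacity below $\delta$. This gives disjoint clopen sets $B_1,\dots,B_N\subset A$ with $B_i\sim_Q B_1$ and $\capp(R)<\delta$, where $R=A\setminus\bigsqcup_i B_i$. By Lemma~\ref{lem-HQ-small}, $\nu(R)<\eta/2$ for every $\nu\in M_{\mathcal H_Q}(X)$. By Boolean homogeneity, Proposition~\ref{prop-homog}, each $B_i=h_i(B_1)$ for some $h_i\in\mathcal H_Q$, so $\nu(B_i)=\nu(B_1)$ for all such $\nu$. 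Now let $p=\lfloor cN\rfloor$ and $C_A=B_1\sqcup\cdots\sqcup B_p$. Then $\nu(C_A)=p\,\nu(B_1)$, while $c\,\nu(A)=cN\nu(B_1)+c\,\nu(R)$. Hence
\[
\bigl|\nu(C_A)-c\,\nu(A)\bigr|\le (cN-p)\nu(B_1)+\nu(R)<\nu(B_1)+\tfrac{\eta}{2}\le\tfrac1N+\tfrac\eta2<\eta,
\]
using $N\nu(B_1)\le 1$.

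The main point to check is uniformity over the \emph{unknown} set $M_{\mathcal H_Q}(X)$, which may a priori be larger than $Q$. This is handled by two facts. First, the equalities $\nu(B_i)=\nu(B_1)$ come from the homeomorphisms in $\mathcal H_Q$ supplied by Proposition~\ref{prop-homog}, not merely from $Q$-equivalence. Second, the smallness of the remainder transfers from $Q$ to all $\mathcal H_Q$-invariant $\nu$ via Lemma~\ref{lem-HQ-small}. No step uses more than these two results together with the Boolean Rokhlin lemma.
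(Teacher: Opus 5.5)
Your proof is correct and follows the paper's argument essentially step for step. You decompose $f$ over a clopen partition on which it is constant, apply the Boolean Rokhlin lemma inside each piece, transfer smallness of the remainders to all $\mathcal H_Q$-invariant measures via Lemma~\ref{lem-HQ-small}, and use Proposition~\ref{prop-homog} to give the levels equal $\nu$-measure. The only cosmetic difference is in the error bookkeeping: you give each piece an error budget of $\epsilon/m$, so your $N$ depends on $m$, whereas the paper takes a single $N$ with $1/N<\epsilon/2$ and uses that the $\nu$-measures of the pieces sum to at most $1$ to control the total rounding error.
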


\begin{proof}
Write
\[
X=P_1\sqcup\cdots\sqcup P_m,
\qquad
f|_{P_i}=t_i\in[0,1].
\]
Choose $N$ so large that $1/N<\epsilon/2$, and apply
Lemma~\ref{lem-HQ-small} with $\epsilon/(2m)$ to obtain $\delta>0$.
For each $i$, Proposition~\ref{prop-uniform-R} gives a decomposition
\[
P_i=P_{i,1}\sqcup\cdots\sqcup P_{i,N}\sqcup R_i,
\qquad
P_{i,j}\sim_QP_{i,1},
\]
with
$\sup_{\mu\in Q}\mu(R_i)<\delta$.

Hence Lemma~\ref{lem-HQ-small} gives
\[
\nu(R_i)<\frac{\epsilon}{2m}
\qquad
(\nu\in M_{\mathcal H_Q}(X)).
\]
Choose $k_i\in\{0,\ldots,N\}$ with
$|k_i/N-t_i|\leq1/N$, and put
\[
C=\bigsqcup_i\bigsqcup_{j=1}^{k_i}P_{i,j}.
\]
For every $\nu\in M_{\mathcal H_Q}(X)$, Boolean homogeneity gives equal
$\nu$-measure to the $N$ levels over $P_i$, and hence
\[
\nu(P_{i,j})=
\frac{\nu(P_i)-\nu(R_i)}{N}.
\]
Consequently,
\[
\begin{aligned}
\left|\nu(C)-\int_Xf\,d\nu\right|
&\leq
\sum_i\left|\frac{k_i}{N}-t_i\right|\nu(P_i)
+\sum_i\frac{k_i}{N}\nu(R_i)\\
&\leq\frac1N+\sum_i\nu(R_i)
<\epsilon.
\end{aligned}
\]
\end{proof}

\begin{prop}\label{prop-HQ-measures}
The set of $\mathcal H_Q$-invariant probability measures is exactly $Q$:
\[
M_{\mathcal H_Q}(X)=Q.
\]
\end{prop}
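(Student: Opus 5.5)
The inclusion $Q\subseteq M_{\mathcal H_Q}(X)$ is immediate from the definition of $\mathcal H_Q$: every $h\in\mathcal H_Q$ satisfies $h_*\mu=\mu$ for all $\mu\in Q$. So the plan concentrates on the reverse inclusion. Fix $\nu\in M_{\mathcal H_Q}(X)$. The goal is to show that $\nu$ defines a normalized state on $G_Q$ and then invoke the identification $S(G_Q,u)\cong Q$ from Theorem~\ref{thm-GQ-dimension-group}.

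\textbf{Step 1: $\nu$ depends only on $Q$-evaluation profiles.} If $A\sim_Q B$, Proposition~\ref{prop-homog} gives $h\in\mathcal H_Q$ with $h(A)=B$, so $\nu(A)=\nu(B)$. Hence $f\mapsto\int f\,d\nu$ vanishes on $J_Q$.

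\textbf{Step 2: it vanishes on all of $N_Q$.} This is the main obstacle, since we do not yet know $J_Q=N_Q$. Instead I would work with positivity directly. Take $f\in C(X,\Z)$ with $\widehat f>0$ on $Q$, and aim to show $\int f\,d\nu\ge0$. Write $f=f^+-f^-$ with $f^\pm\geq0$ integer-valued, and decompose each as a sum of indicators of clopen sets, $f^+=\sum_k\mathbf 1_{A_k}$ and $f^-=\sum_l\mathbf 1_{B_l}$. Using Proposition~\ref{prop-uniform-R} (the Boolean Rokhlin lemma), we may pass to an $N$-fold multiple. Alternatively, the cleaner route is to approximate using Lemma~\ref{lem-HQ-clopenization} applied to $f^\pm/M$, with $M\ge\|f\|_\infty$, to produce clopen sets $C^\pm$ with
\[
\nu(C^\pm)\approx\int f^\pm\,d\nu/M .
\]
The same construction, applied to the elements of $Q\subseteq M_{\mathcal H_Q}(X)$, gives $\mu(C^\pm)\approx\int f^\pm\,d\mu/M$ uniformly in $\mu\in Q$.

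Set $\delta=\min_Q\widehat f>0$. Choosing the error smaller than $\delta/(3M)$ gives $\widehat{C^-}<\widehat{C^+}$. The subset condition then yields $C'\subset C^+$ with $C'\sim_Q C^-$, so by Step 1
\[
\nu(C^-)=\nu(C')\le\nu(C^+).
\]
Letting the error tend to $0$ gives $\int f\,d\nu\ge0$. Applying this to $f+\eta\mathbf 1$-type perturbations is unavailable in integers, so for $f\in N_Q$ I would instead use the Rokhlin lemma: the element $N f+\mathbf 1_{B}$, where $B$ is small, is strictly positive on $Q$. More precisely, take $g=Nf+\mathbf 1_X$, which satisfies $\widehat g=1>0$, so $N\int f\,d\nu+1\ge0$ for all $N$. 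Similarly with $-f$, whence $\int f\,d\nu=0$.

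\textbf{Step 3: conclude.} By Steps 1 and 2, $s(g)=\int f\,d\nu$ for $g=[f]$ is a well-defined homomorphism $G_Q\to\R$. It is positive by Step 2 and normalized since $s(u)=1$. By Theorem~\ref{thm-GQ-dimension-group}, $s=s_\mu$ for some $\mu\in Q$, so $\nu$ and $\mu$ agree on all clopen sets. Hence $\nu=\mu\in Q$.

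The delicate point is the approximation in Step 2. It requires Lemma~\ref{lem-HQ-clopenization} to approximate simultaneously for $\nu$ and uniformly over $Q$; this holds because $Q\subseteq M_{\mathcal H_Q}(X)$ and the lemma's bound is uniform over all of $M_{\mathcal H_Q}(X)$.
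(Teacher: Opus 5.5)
Your proof is correct, but it reaches the conclusion by a different route from the paper.

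\textbf{The paper's route.} The paper argues by contradiction directly in $M(X)$. If $\nu\notin Q$, Hahn--Banach separation (followed by locally constant approximation and rescaling) produces a locally constant $f\colon X\to[0,1]$ with $\int f\,d\nu>\sup_{\mu\in Q}\int f\,d\mu$. Lemma~\ref{lem-HQ-clopenization} then gives clopen sets $C,D$ approximating $f$ and a separating constant $r$. These satisfy $\widehat C<\widehat D$ on $Q$ but $\nu(C)>\nu(D)$, and the subset condition together with Proposition~\ref{prop-homog} gives the contradiction.

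\textbf{Your route.} You use the same engine: uniform clopen approximation over $M_{\mathcal H_Q}(X)$, then the subset condition and homogeneity to transfer a strict $Q$-inequality to $\nu$. But you use it only to show that $[f]\mapsto\int f\,d\nu$ is a well-defined, positive, normalized state on $G_Q$. The convex-analytic step is then handed off to the state-space identification $S(G_Q,u)\cong Q$ in Theorem~\ref{thm-GQ-dimension-group}. Its proof (density of $V_Q$ from Lemma~\ref{lem-Udense} plus the barycenter argument) plays the role of the paper's separation. There is no circularity, since Theorem~\ref{thm-GQ-dimension-group} and Lemma~\ref{lem-HQ-clopenization} are proved independently of this proposition.

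\textbf{What each buys.} The paper's argument is more self-contained. Besides Lemma~\ref{lem-HQ-clopenization}, it needs only a separation argument in $M(X)$, and does not depend on Lemma~\ref{lem-Udense} or the dimension-group theorem. Yours yields a slightly more structural statement: every $\mathcal H_Q$-invariant probability measure induces a state of $G_Q$. This ties Proposition~\ref{prop-HQ-measures} directly to the state-space theorem.

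\textbf{Cleanup.} Delete the remarks about passing to an $N$-fold multiple via Proposition~\ref{prop-uniform-R} and about $Nf+\mathbf 1_B$ with $B$ small; they are never used. Your final choice $g=Nf+\mathbf 1_X$, with $\widehat g\equiv1$, already gives $\int f\,d\nu\geq-1/N$ for all $N$, hence vanishing on $N_Q$. Also state explicitly that one pair $C^\pm$ from Lemma~\ref{lem-HQ-clopenization} serves both $\nu$ and every $\mu\in Q$. The resulting estimate is $\int f\,d\nu\geq M\bigl(\nu(C^+)-\nu(C^-)\bigr)-2M\epsilon\geq-2M\epsilon$.
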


\begin{proof}
The inclusion $Q\subseteq M_{\mathcal H_Q}(X)$ follows from the definition of
$\mathcal H_Q$.  Conversely, let $\nu\in M_{\mathcal H_Q}(X)$ and suppose
that $\nu\notin Q$.  Since $Q$ is compact and convex, Hahn--Banach separation,
followed by a sufficiently close locally constant approximation and an affine
rescaling, gives a locally constant function $f:X\to[0,1]$ such that
\[
\beta:=\int_Xf\,d\nu
>
\alpha:=\sup_{\mu\in Q}\int_Xf\,d\mu.
\]
Put $r=(\alpha+\beta)/2$ and choose
\[
0<\epsilon<\frac{\beta-\alpha}{4}.
\]
By Lemma~\ref{lem-HQ-clopenization}, choose clopen sets $C,D$ which
approximate, uniformly over $M_{\mathcal H_Q}(X)$, respectively the functions
$f$ and the constant function $r$.  Thus, for every $\mu\in Q$,
\[
\mu(C)<\alpha+\epsilon<r-\epsilon<\mu(D),
\]
whereas
\[
\nu(C)>\beta-\epsilon>r+\epsilon>\nu(D).
\]
Hence $\widehat C<\widehat D$ on $Q$.  The subset condition gives a clopen
$C'\subset D$ with $C'\sim_QC$.  Proposition~\ref{prop-homog} gives
$h\in\mathcal H_Q$ with $h(C)=C'$.  Since $\nu$ is $\mathcal H_Q$-invariant,
\[
\nu(C)=\nu(C')\leq\nu(D),
\]
a contradiction.  Therefore $\nu\in Q$.
\end{proof}

\begin{cor}\label{cor-HQ-jordan}
If a finite signed measure belongs to $\operatorname{span}_{\R}Q$, then its
positive and negative parts in the Jordan decomposition are nonnegative scalar
multiples of measures in $Q$.
\end{cor}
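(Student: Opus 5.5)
The plan is to show that the Jordan parts of $\lambda$ are themselves $\mathcal H_Q$-invariant, and then to invoke Proposition~\ref{prop-HQ-measures}.

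First I would observe that every measure in $Q$ is $\mathcal H_Q$-invariant. Hence every finite signed measure $\lambda=\sum_k c_k\mu_k$ with $c_k\in\R$ and $\mu_k\in Q$ satisfies $h_*\lambda=\lambda$ for all $h\in\mathcal H_Q$.

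Next, write the Jordan decomposition $\lambda=\lambda^+-\lambda^-$. For $h\in\mathcal H_Q$ the push-forward $h_*$ is a bijection on signed measures that preserves positivity and total variation. It is induced by a homeomorphism, so it maps a Hahn decomposition $X=P\sqcup P^c$ of $\lambda$ to a Hahn decomposition $h(P)\sqcup h(P^c)$ of $h_*\lambda$. Therefore $h_*\lambda^+=(h_*\lambda)^+=\lambda^+$, and likewise for $\lambda^-$. One can also see this without Hahn sets: $h_*\lambda^+ - h_*\lambda^-$ is a decomposition of $\lambda$ into positive measures with total mass $\|\lambda\|$, and the Jordan decomposition is the unique minimal one. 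Either way, $\lambda^\pm$ are $\mathcal H_Q$-invariant positive measures.

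Finally, if $\lambda^+\ne 0$, normalize and set $\nu=\lambda^+/\lambda^+(X)$. This is an $\mathcal H_Q$-invariant probability measure, so $\nu\in M_{\mathcal H_Q}(X)=Q$ by Proposition~\ref{prop-HQ-measures}. Thus $\lambda^+=\lambda^+(X)\,\nu$. If $\lambda^+=0$ it is the zero multiple of any element of $Q$. The same argument applies to $\lambda^-$.

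The only point needing care is the invariance of the Jordan parts under push-forward. I expect this to be routine via the uniqueness (minimality) of the Jordan decomposition, so the real content is Proposition~\ref{prop-HQ-measures}.
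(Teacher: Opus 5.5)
Your proof is correct and follows the paper's argument: elements of $\operatorname{span}_{\R}Q$ are $\mathcal H_Q$-invariant, uniqueness of the Jordan decomposition makes $\lambda^{\pm}$ invariant, and normalizing and applying Proposition~\ref{prop-HQ-measures} finishes the proof. Your Hahn-set check that $h_*\lambda^+=(h_*\lambda)^+$ just spells out the uniqueness step the paper invokes in one line.
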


\begin{proof}
Every element of $\operatorname{span}_{\R}Q$ is $\mathcal H_Q$-invariant.
By uniqueness of the Jordan decomposition, its positive and negative parts
are also $\mathcal H_Q$-invariant.  After normalization,
Proposition~\ref{prop-HQ-measures} applies.
\end{proof}

The next lemma is the key approximation statement.

\begin{lem}[Relative clopen density]\label{lem-relative-density}
Let $A\subset X$ be clopen.  Then
\[
\left\{\widehat C:C\subset A\text{ clopen}\right\}
\]
is uniformly dense in the affine order interval
\[
\left\{c\in\Aff(Q):0\leq c\leq\widehat A\right\}.
\]
\end{lem}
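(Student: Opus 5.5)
The plan has three steps. First, show that $c$ can be approximated by evaluation functions of functions supported in $A$. Second, turn such a function into a clopen subset of $A$ using the Boolean Rokhlin lemma. The second step is where the subset condition enters, through Proposition~\ref{prop-uniform-R}; the first is essentially a duality argument that rests on Corollary~\ref{cor-HQ-jordan}.

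\emph{Step 1 (convex approximation).} Consider the convex set
\[
K_A=\Bigl\{a_\varphi:\ \varphi:X\to[0,1]\ \text{locally constant},\ \varphi=0\ \text{off }A\Bigr\}\subset\Aff(Q),
\qquad a_\varphi(\mu)=\int_X\varphi\,d\mu .
\]
I will show that its uniform closure contains every $c$ with $0\le c\le\widehat A$. Suppose not. Hahn--Banach separation gives a continuous linear functional $\Lambda$ on $\Aff(Q)$, realized as before by a signed measure $\lambda$ on $Q$, with
\[
\Lambda(c)>\sup_{\varphi}\Lambda(a_\varphi).
\]
Let $\sigma=\int_Q\mu\,d\lambda(\mu)$ be the associated signed measure on $X$, so that $\Lambda(a_\varphi)=\int\varphi\,d\sigma$. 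This $\sigma$ lies in $\operatorname{span}_\R Q$ (it is the difference of the rescaled barycenters of $\lambda^\pm$). By Corollary~\ref{cor-HQ-jordan} its Jordan decomposition is $\sigma=s\mu_1-t\mu_2$ with $s,t\ge0$, $\mu_1,\mu_2\in Q$, and $\mu_1\perp\mu_2$.

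Since the $a_\varphi$ span a dense subspace of $\Aff(Q)$ (first half of Lemma~\ref{lem-Udense}), $\Lambda$ depends only on $\sigma$: $\Lambda(a)=s\,a(\mu_1)-t\,a(\mu_2)$ for every $a\in\Aff(Q)$. Hence
\[
\Lambda(c)\le s\,c(\mu_1)\le s\,\mu_1(A).
\]
On the other hand, $\sup_\varphi\int\varphi\,d\sigma=\sigma^+(A)=s\mu_1(A)$. To see this, choose a Borel set carrying $\mu_1$ and null for $\mu_2$, intersect it with $A$, and approximate it from inside by clopen sets using regularity; indicators of these clopen sets are admissible $\varphi$. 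This contradicts the strict inequality.

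\emph{Step 2 (clopenization inside $A$).} Given an admissible locally constant $\varphi$ and $\epsilon>0$, write $A=P_1\sqcup\dots\sqcup P_m$ with $\varphi|_{P_i}=t_i$. Apply Proposition~\ref{prop-uniform-R} to each $P_i$ with a large $N$ and remainder capacity less than $\epsilon/(2m)$. This gives $P_i=P_{i,1}\sqcup\dots\sqcup P_{i,N}\sqcup R_i$ with $P_{i,j}\sim_Q P_{i,1}$.

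Choose $k_i$ with $|k_i/N-t_i|\le1/N$ and put $C=\bigsqcup_i\bigsqcup_{j\le k_i}P_{i,j}\subset A$. The estimate in the proof of Lemma~\ref{lem-HQ-clopenization}, now carried out for $\mu\in Q$ directly (where the levels have equal $\mu$-measure by definition of $\sim_Q$), gives $\|\widehat C-a_\varphi\|_\infty<1/N+\epsilon/2<\epsilon$. Combining with Step 1 proves the lemma.

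\emph{Main obstacle.} I expect Step 1 to be the hard part, and specifically the identification $\sup_\varphi\Lambda(a_\varphi)=\sigma^+(A)$ together with the bound $\Lambda(c)\le s\,c(\mu_1)$. The latter needs the Jordan parts of $\sigma$ to be multiples of points of $Q$, so that $c$ can be evaluated on them. Without Corollary~\ref{cor-HQ-jordan}, $\sigma^+$ need not be affinely comparable to $\widehat A$, and the constraint $c\le\widehat A$ could not be exploited. If the direct identification of $\Lambda$ via $\sigma$ proves delicate, I would first perturb $c$ to
\[
c'=(1-\eta)c+\tfrac{\eta}{2}\widehat A,
\]
which is strictly inside the interval with a uniform gap (using properness, $\min_Q\widehat A>0$ for $A\ne\emptyset$). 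I would then run the separation argument for $c'$ and let $\eta\to0$.
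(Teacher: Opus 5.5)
Your proposal is correct and follows essentially the same route as the paper. In both, Hahn--Banach separation produces a signed measure $\sigma\in\operatorname{span}_{\R}Q$; Corollary~\ref{cor-HQ-jordan} makes $\sigma^\pm$ multiples of points of $Q$, which gives the comparison $\Lambda(c)\le s\,\mu_1(A)=\sigma^+(A)$; and the resulting function is then turned into a clopen subset of $A$ inside each level set via Proposition~\ref{prop-uniform-R}.

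One small wording fix is needed. A Borel set cannot in general be approximated from inside by clopen sets. To get $\sup_\varphi\int\varphi\,d\sigma\ge\sigma^+(A)$, take a compact $K\subset P\cap A$ with $|\sigma|((P\cap A)\setminus K)$ small, and then a clopen $U$ with $K\subset U\subset A$ and $|\sigma|(U\setminus K)$ small.
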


\begin{proof}
For $\varphi\in C(X,\R)$ write
\[
a_\varphi(\mu)=\int_X\varphi\,d\mu.
\]
We first show that the functions $a_\varphi$ with
$0\leq\varphi\leq\mathbf 1_A$ are dense in the indicated affine interval.
Suppose that $0\leq c\leq\widehat A$ is not in their uniform closure.
Hahn--Banach separation gives a continuous linear functional $\Lambda$ on
$\Aff(Q)$ such that
\begin{equation}\label{eq-Lambda}
\Lambda(c)>
\sup_{0\leq\varphi\leq\mathbf 1_A}\Lambda(a_\varphi).
\end{equation}
The functional
\[
\varphi\longmapsto\Lambda(a_\varphi)
\]
on $C(X,\R)$ is bounded, and therefore is represented by a finite signed
measure $\sigma$ on $X$:
\[
\int_X\varphi\,d\sigma=\Lambda(a_\varphi).
\]

Extend $\Lambda$ to a continuous linear functional on $C(Q)$.  By the
Riesz representation theorem, there is a finite signed measure
$\lambda$ on $Q$ such that
\[
\Lambda(d)=\int_Q d(\nu)\,d\lambda(\nu)
\qquad(d\in\Aff(Q)).
\]
Write
$\lambda=\lambda^+-\lambda^-$
for its Jordan decomposition, and put
\[
\alpha=\lambda^+(Q),\qquad \beta=\lambda^-(Q).
\]
If $\alpha>0$, let $\mu_+\in Q$ be the barycenter of the probability
measure $\alpha^{-1}\lambda^+$; thus
\[
d(\mu_+)
=
\frac1\alpha\int_Q d(\nu)\,d\lambda^+(\nu)
\qquad(d\in\Aff(Q)).
\]
Similarly, if $\beta>0$, let $\mu_-\in Q$ be the barycenter of
$\beta^{-1}\lambda^-$.  These barycenters belong to $Q$ because $Q$ is
compact and convex.

For every $\varphi\in C(X,\R)$ we therefore have
\[
\begin{aligned}
\int_X\varphi\,d\sigma
&=\Lambda(a_\varphi)
=\int_Q a_\varphi(\nu)\,d\lambda(\nu)\\
&=\alpha a_\varphi(\mu_+)-\beta a_\varphi(\mu_-)
=\int_X\varphi\,d(\alpha\mu_+-\beta\mu_-).
\end{aligned}
\]
Hence
\[
\sigma=\alpha\mu_+-\beta\mu_-,
\]
with the evident omission of a term when $\alpha=0$ or $\beta=0$.
Consequently,
$\sigma\in\operatorname{span}_{\R}Q$.

By Corollary~\ref{cor-HQ-jordan}, the Jordan decomposition of $\sigma$ has the
form
\[
\sigma=\alpha\mu-\beta\nu,
\qquad
\alpha,\beta\geq0,
\quad
\mu,\nu\in Q,
\]
with the evident interpretation if one part is zero.  Since the functions
$a_\varphi$, $\varphi\in C(X,\R)$, are dense in $\Aff(Q)$ by the first part of
the proof of Lemma~\ref{lem-Udense},
\[
\Lambda(r)=\alpha r(\mu)-\beta r(\nu)
\qquad(r\in\Aff(Q)).
\]
Therefore
\[
\Lambda(c)
\leq\alpha c(\mu)
\leq\alpha\mu(A)
=\sigma^+(A).
\]
On the other hand, by the defining property of the positive variation,
\[
\sup_{0\leq\varphi\leq\mathbf 1_A}\Lambda(a_\varphi)
=
\sup_{0\leq\varphi\leq\mathbf 1_A}\int_X\varphi\,d\sigma
=
\sigma^+(A),
\]
contradicting (\ref{eq-Lambda}).

It remains to replace $\varphi$ by a clopen subset of $A$.  Approximate
$\varphi$ uniformly by a locally constant function
\[
\psi=\sum_{i=1}^m t_i\mathbf 1_{P_i},
\qquad
A=P_1\sqcup\cdots\sqcup P_m,
\quad
0\leq t_i\leq1,
\]
with $\psi=0$ off $A$.  For large $N$, apply
Proposition~\ref{prop-uniform-R} inside each $P_i$:
\[
P_i=P_{i,1}\sqcup\cdots\sqcup P_{i,N}\sqcup R_i,
\qquad
P_{i,j}\sim_QP_{i,1},
\]
with $\sup_{\mu\in Q}\mu(R_i)$ arbitrarily small.  Choose $k_i$ with
$|k_i/N-t_i|\leq1/N$ and put
\[
C=\bigsqcup_i\bigsqcup_{j=1}^{k_i}P_{i,j}\subset A.
\]
Then, uniformly in $\mu\in Q$,
\[
\left|\mu(C)-\int_X\psi\,d\mu\right|
\leq
\frac1N+\sum_i\sup_{\nu\in Q}\nu(R_i),
\]
which can be made arbitrarily small.  This proves the lemma.
\end{proof}

We now extend the subset condition from comparison of individual clopen sets to comparison of finite sums of clopen classes. All order comparisons in the next two results are taken in the concrete
ordered group $G_Q^J$.

\begin{lem}[One-to-many comparison]\label{lem-one-many}
Let $A,B_1,\ldots,B_s$ be nonempty  clopen sets.  If
\[
\widehat A<\sum_{j=1}^s\widehat{B_j}
\quad\text{pointwise on }Q,
\]
then
\[
[\mathbf 1_A]_{J_Q}
\leq
\sum_{j=1}^s[\mathbf 1_{B_j}]_{J_Q}
\quad\text{in }G_Q^J.
\]
\end{lem}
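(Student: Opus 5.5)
The plan is to reduce the inequality in $G_Q^J$ to a partition of $A$ into pieces that can each be moved inside the corresponding $B_j$. Suppose we find a clopen partition $A=A_1\sqcup\cdots\sqcup A_s$ with $\widehat{A_j}<\widehat{B_j}$ pointwise on $Q$ for each $j$ (empty pieces allowed). The subset condition then gives clopen sets $A_j'\subset B_j$ with $A_j'\sim_Q A_j$. Hence $\mathbf 1_{A_j}-\mathbf 1_{A_j'}\in J_Q$, and
\[
\sum_{j}\mathbf 1_{B_j}-\mathbf 1_A\equiv \sum_j\bigl(\mathbf 1_{B_j}-\mathbf 1_{A_j'}\bigr)=\sum_j \mathbf 1_{B_j\setminus A_j'}\geq 0 \pmod{J_Q}.
\]
This is exactly $[\mathbf 1_A]_{J_Q}\leq\sum_j[\mathbf 1_{B_j}]_{J_Q}$ in the cone \eqref{eq-GJ-cone}. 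So everything reduces to constructing the partition.

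I would build the partition by induction on $s$, proving the statement: if $\widehat A<\sum_{j=1}^s\widehat{B_j}$ pointwise, then such a partition exists.

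\textbf{Base case.} For $s=1$ there is nothing to do: take $A_1=A$.

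\textbf{Inductive step.} Let $\rho=\min_{\mu\in Q}\bigl(\sum_j\widehat{B_j}-\widehat A\bigr)(\mu)>0$, which is positive by compactness. Since the $B_j$ are nonempty and $Q$ is proper, $\delta=\min_j\min_Q\widehat{B_j}>0$. Choose $0<\eta<\min(\delta,\rho/s)$. Then
\[
0\leq\widehat A\leq\sum_j\bigl(\widehat{B_j}-\eta\bigr),
\]
and every summand on the right is a positive affine function. Because $Q$ is a Choquet simplex, $\Aff(Q)$ has the Riesz decomposition property (equivalent to the interpolation property used in Theorem \ref{thm-GQ-dimension-group}). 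This gives affine $c_j$ with $\widehat A=\sum_j c_j$ and $0\leq c_j\leq\widehat{B_j}-\eta$.

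In particular $0\leq c_1\leq\widehat A$. Lemma \ref{lem-relative-density} then gives a clopen $C_1\subset A$ with $\|\widehat{C_1}-c_1\|_\infty<\theta$, for some $\theta<\eta/2$ to be fixed. Set $A_1=C_1$, so that $\widehat{A_1}<\widehat{B_1}$. The remainder $A\setminus C_1$ satisfies
\[
\widehat{A\setminus C_1}=\widehat A-\widehat{C_1}<\sum_{j\geq2}c_j+\theta\leq\sum_{j\geq2}\widehat{B_j}-(s-1)\eta+\theta<\sum_{j\geq 2}\widehat{B_j}.
\]
The inductive hypothesis, applied to $A\setminus C_1$ and $B_2,\ldots,B_s$, partitions the remainder. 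If $A\setminus C_1$ is empty, the remaining pieces are empty.

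The main obstacle is this splitting step: we need exact equality $\widehat A=\sum_j\widehat{A_j}$, while relative clopen density only approximates. The sequential peeling sidesteps this, since only the strict inequalities must survive, and the slack $\eta$ absorbs the approximation errors. The other point to check is that the Riesz decomposition in $\Aff(Q)$ is legitimately available. That holds because $\Aff(Q)$ is an interpolation group for a Choquet simplex, and decomposition for $s$ summands follows from the two-summand case by induction.
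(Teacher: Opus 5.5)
Your proposal is correct and follows essentially the paper's argument: induction on $s$, using the Riesz property of $\Aff(Q)$ with an $\eta$-slack to produce an affine target $c_1$ satisfying $\max(0,\widehat A-\sum_{j\ge2}\widehat{B_j}+(s-1)\eta)\le c_1\le\min(\widehat A,\widehat{B_1}-\eta)$ (the paper's interpolant $d$), approximating it by a clopen $C\subset A$ via Lemma~\ref{lem-relative-density}, and recursing on $A\setminus C$. The only differences are cosmetic. You use Riesz decomposition instead of the equivalent interpolation. You also allow empty pieces and apply the subset condition at the end, whereas the paper applies it at each step and chooses $d$ with $\eta\le d\le\widehat A-\eta$ so that $C$ and $A\setminus C$ stay nonempty.
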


\begin{proof}
We use induction on $s$.  The case $s=1$ follows directly from the subset
condition: a $Q$-equivalent copy of $A$ can be placed inside $B_1$, and the
remaining part of $B_1$ represents the positive difference.

Assume the result for $s-1$ and put
\[
a=\widehat A,
\qquad
b=\widehat{B_1},
\qquad
c=\sum_{j=2}^s\widehat{B_j}.
\]

Put
$\rho=\min_{\mu\in Q}(b(\mu)+c(\mu)-a(\mu))>0$.
By compactness and full support,
\[
\min_Q a>0,
\qquad
\min_Q b>0,
\qquad
\min_Q c>0.
\]
Choose $\eta>0$ so small that
$4\eta<\min\{\min_Qa,\min_Qb,\min_Qc,\rho\}$.
Then, the two affine functions
\[
\eta\mathbf 1,
\qquad
a-c+2\eta\mathbf 1
\]
are both strictly below each of
\[
a-\eta\mathbf 1,
\qquad
b-\eta\mathbf 1.
\]
Since $Q$ is a Choquet simplex, $\Aff(Q)$ has Riesz interpolation.  Hence
there exists $d\in\Aff(Q)$ satisfying
\[
\eta\leq d\leq a-\eta,
\qquad
a-c+2\eta\leq d\leq b-\eta.
\]
By Lemma~\ref{lem-relative-density}, choose a clopen $C\subset A$ with
\[
\|\widehat C-d\|_\infty<\eta/2.
\]
Then
\[
\widehat C<\widehat{B_1},
\qquad
\widehat{A\setminus C}<\sum_{j=2}^s\widehat{B_j}.
\]
The subset condition gives
$
[\mathbf 1_C]_{J_Q}\leq[\mathbf 1_{B_1}]_{J_Q}$,
and the induction hypothesis gives
\[
[\mathbf 1_{A\setminus C}]_{J_Q}
\leq
\sum_{j=2}^s[\mathbf 1_{B_j}]_{J_Q}.
\]
Adding the two inequalities proves the result.
\end{proof}

\begin{prop}[Finite-sum strict comparison]\label{prop-stable-comparison}
Let $A_1,\ldots,A_r,B_1,\ldots,B_s$ be clopen sets.  If
\[
\sum_{i=1}^r\widehat{A_i}
<
\sum_{j=1}^s\widehat{B_j}
\quad\text{pointwise on }Q,
\]
then
\[
\sum_{i=1}^r[\mathbf 1_{A_i}]_{J_Q}
\leq
\sum_{j=1}^s[\mathbf 1_{B_j}]_{J_Q}
\quad\text{in }G_Q^J.
\]
\end{prop}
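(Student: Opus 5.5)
Discard empty sets first: an empty $A_i$ contributes $0$ to both sides, and an empty $B_j$ contributes $0$ as well. If no $A_i$ is left, the left side is $0$ and the claim is trivial, since each $[\mathbf 1_{B_j}]_{J_Q}$ lies in the cone \eqref{eq-GJ-cone}. So assume $r\geq1$ and all sets nonempty. The hypothesis then forces $s\geq1$.

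The plan is an induction on $r$, with Lemma~\ref{lem-one-many} as the base case $r=1$. For the inductive step the idea is to split the right-hand side rather than the left. Put
\[
a=\widehat{A_1},\qquad a'=\sum_{i=2}^r\widehat{A_i},\qquad b=\sum_{j=1}^s\widehat{B_j},
\]
and $\rho=\min_Q(b-a-a')>0$. Using Lemma~\ref{lem-relative-density} inside each $B_j$, I choose clopen sets $C_j\subset B_j$ with $\sum_j\widehat{C_j}$ uniformly within $\rho/3$ of the affine function
\[
d=a+\tfrac{\rho}{2}\cdot\frac{a+a'}{\,b\,}\,\widehat{?}
\]
Rather than fuss with that formula, I take the simpler target
\[
d_j=\frac{a+\rho/2}{b}\,\widehat{B_j}.
\]
This function is affine only if $b$ is constant, which is not the case, so the choice has to be changed. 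Instead I use Riesz interpolation in $\Aff(Q)$, exactly as in the proof of Lemma~\ref{lem-one-many}. Setting $c_j=\widehat{B_j}$, I need affine $d_1,\dots,d_s$ with $0\le d_j\le c_j$ and
\[
a+\tfrac{\rho}{3}\le\sum_j d_j\le b-a'-\tfrac{\rho}{3}.
\]
The Riesz decomposition property of $\Aff(Q)$, which holds because $Q$ is a Choquet simplex, provides this. First pick $e$ with $a+\rho/3\le e\le b-a'-\rho/3$; one may simply take $e=a+\rho/2$. Then $0\le e\le b=\sum_j c_j$, and Riesz decomposition writes $e=\sum_j d_j$ with $0\le d_j\le c_j$.

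Lemma~\ref{lem-relative-density} applied in each $B_j$ now gives clopen $C_j\subset B_j$ with $\|\widehat{C_j}-d_j\|_\infty<\rho/(8s)$. This yields two strict inequalities:
\[
\widehat{A_1}<\sum_j\widehat{C_j},\qquad \sum_{i\ge2}\widehat{A_i}<\sum_j\widehat{B_j\setminus C_j}.
\]
Lemma~\ref{lem-one-many} applied to the first gives $[\mathbf 1_{A_1}]_{J_Q}\le\sum_j[\mathbf 1_{C_j}]_{J_Q}$, after dropping any empty $C_j$; at least one is nonempty since $\widehat{A_1}>0$. The induction hypothesis applied to the second, again dropping empty pieces, gives the remaining inequality. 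Adding the two and using $\mathbf 1_{B_j}=\mathbf 1_{C_j}+\mathbf 1_{B_j\setminus C_j}$ in $C(X,\Z)$ completes the step.

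The main obstacle is producing the decomposition $e=\sum d_j$ with $0\le d_j\le c_j$. I would derive it from interpolation by induction on $s$, exactly as $d$ was produced in the proof of Lemma~\ref{lem-one-many}: for two summands, pick $d_1$ with $\max(0,e-c_2)\le d_1\le\min(c_1,e)$. The danger is that the endpoint inequalities are only non-strict. I would therefore shrink the target slightly and add $\eta$-margins, as in Lemma~\ref{lem-one-many}, so that the approximation step keeps every inequality strict.
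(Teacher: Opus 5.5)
Your proof is correct, but the inductive step takes a different route from the paper's.

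\textbf{The paper's route.} The paper never returns to $\Aff(Q)$ in this step:
\begin{enumerate}
\item It applies Lemma~\ref{lem-one-many} to $A_1$ against all of $B_1,\dots,B_s$ to get $x\le y$.
\item By the definition of the cone of $G_Q^J$, it writes $y-x=[h]_{J_Q}$ with $h\ge0$, and decomposes $h=\sum_k\mathbf 1_{C_k}$ with $C_k=\{h\ge k\}$.
\item Using $J_Q\subseteq N_Q$, it reads off $\sum_k\widehat{C_k}=\sum_j\widehat{B_j}-\widehat{A_1}$ exactly.
\item The induction hypothesis then applies to $A_2,\dots,A_r$ against the $C_k$.
\end{enumerate}

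\textbf{Your route.} You split the right-hand side geometrically, re-running the interpolation-plus-density mechanism of Lemma~\ref{lem-one-many} one level up. You Riesz-decompose $e=a+\rho/2$ under $\sum_j\widehat{B_j}$ and then apply Lemma~\ref{lem-relative-density} inside each $B_j$.

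\textbf{What each buys.} The paper's argument is shorter and purely algebraic once Lemma~\ref{lem-one-many} is available. Its leftover $h$ is an abstract nonnegative function whose level sets need not lie inside the $B_j$. Yours invokes the simplex hypothesis and relative density a second time. In exchange, it produces an explicit clopen subdivision $B_j=C_j\sqcup(B_j\setminus C_j)$ showing how the right-hand side is shared between $A_1$ and the remaining $A_i$. Your preliminary removal of empty sets is also a genuine point: Lemma~\ref{lem-one-many} is stated for nonempty sets, and the paper passes over this.

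\textbf{Cleaning up the write-up.}
\begin{itemize}
\item Delete the abandoned targets: the formula containing $\widehat{?}$ and the non-affine $d_j$.
\item Drop the closing worry about non-strict endpoints. Lemma~\ref{lem-relative-density} approximates within the closed interval $0\le d_j\le\widehat{B_j}$. Strictness is already secured, because the accumulated error is below $\rho/8$ while $e$ lies at distance at least $\rho/2$ from both $a$ and $b-a'$.
\item Riesz decomposition follows from interpolation by exactly the two-summand step you describe, followed by induction on $s$.
\end{itemize}
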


\begin{proof}
We induct on $r$.  The case $r=1$ is Lemma~\ref{lem-one-many}.  Let
\[
x=[\mathbf 1_{A_1}]_{J_Q},
\qquad
y=\sum_{j=1}^s[\mathbf 1_{B_j}]_{J_Q}.
\]
Since $\widehat{A_1}<\sum_j\widehat{B_j}$,
Lemma~\ref{lem-one-many} gives $x\leq y$.  By the definition of the positive
cone on $G_Q^J$, there is a nonnegative $h\in C(X,\Z)$ such that
\[
y-x=[h]_{J_Q}.
\]
Write
$h=\sum_{k=1}^m\mathbf 1_{C_k}$
for clopen sets $C_k$ (for example, take the nonempty level sets
$C_k=\{h\geq k\}$).  Since $J_Q\subseteq N_Q$, the preceding equality gives
\[
\sum_{k=1}^m\widehat{C_k}
=
\sum_{j=1}^s\widehat{B_j}-\widehat{A_1}.
\]
The original strict inequality therefore implies
\[
\sum_{i=2}^r\widehat{A_i}
<
\sum_{k=1}^m\widehat{C_k}.
\]
By the induction hypothesis,
\[
\sum_{i=2}^r[\mathbf 1_{A_i}]_{J_Q}
\leq
[h]_{J_Q}=y-x.
\]
Adding $x$ completes the proof.
\end{proof}

We shall also use the following elementary finite form of the subset
condition.

\begin{lem}[Finite packing]\label{lem-finite-packing}
Let $A_1,\ldots,A_r,B$ be clopen sets and suppose
\[
\sum_{i=1}^r\widehat{A_i}<\widehat B
\quad\text{pointwise on }Q.
\]
Then there exist pairwise disjoint clopen sets
\[
A_1',\ldots,A_r'\subset B
\]
such that $A_i'\sim_QA_i$ for every $i$.
\end{lem}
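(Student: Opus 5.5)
Since every summand $\widehat{A_i}$ is nonnegative, I will prove the lemma by induction on $r$, placing the copies one at a time with the subset condition and carrying the strict inequality into the shrinking remainder of $B$. If some $A_i=\emptyset$ I simply take $A_i'=\emptyset$; so I may assume all $A_i$ are nonempty. By full support each $\widehat{A_i}$ is then strictly positive on $Q$.

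For $r=1$ the statement is exactly the subset condition. For the inductive step, suppose $\sum_{i=1}^r\widehat{A_i}<\widehat B$ pointwise on $Q$. Since $\widehat{A_2},\ldots,\widehat{A_r}\geq 0$, this gives $\widehat{A_1}<\widehat B$. The subset condition then yields a clopen $A_1'\subset B$ with $A_1'\sim_Q A_1$. Put $B'=B\setminus A_1'$. Since $\mathbf 1_B=\mathbf 1_{A_1'}+\mathbf 1_{B'}$, as in Lemma~\ref{lem-quasi}(1), we get
\[
\widehat{B'}=\widehat B-\widehat{A_1},
\]
and therefore $\sum_{i=2}^r\widehat{A_i}<\widehat{B'}$ pointwise on $Q$. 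The induction hypothesis, applied to $A_2,\ldots,A_r$ and $B'$, produces pairwise disjoint clopen sets $A_2',\ldots,A_r'\subset B'$ with $A_i'\sim_Q A_i$. These are automatically disjoint from $A_1'$, which completes the induction.

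There is no real obstacle here. The only points to check are that the strict pointwise inequality passes to the remainder $B'$, which is the displayed identity above, and that the subset condition as defined applies to any clopen sets in $\mathcal O^+$, including the degenerate cases $B'=\emptyset$ or $B=X$. If $B'=\emptyset$, the strict inequality forces $r=1$, so the induction has already stopped. No Riesz interpolation or density argument is needed, unlike in Lemma~\ref{lem-one-many}, because here a single target set $B$ is being filled.
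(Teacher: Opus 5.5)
Your proof is correct and is essentially the paper's argument. The paper places the copies successively inside the remainders $R_{k-1}=B\setminus\bigsqcup_{i<k}A_i'$, using $\widehat{R_{k-1}}=\widehat B-\sum_{i<k}\widehat{A_i}$ and the subset condition at each step, which is exactly your induction on $r$ written out step by step.
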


\begin{proof}
We place the sets successively.  Suppose that pairwise disjoint
$A_1',\ldots,A_{k-1}'\subset B$ have already been chosen with
$A_i'\sim_QA_i$, and put
\[
R_{k-1}=B\setminus\bigsqcup_{i<k}A_i'.
\]
Then
\[
\widehat{R_{k-1}}
=
\widehat B-\sum_{i<k}\widehat{A_i}.
\]
The assumed strict inequality gives
$\widehat{A_k}<\widehat{R_{k-1}}$.
The subset condition therefore provides a clopen
$A_k'\subset R_{k-1}$ with $A_k'\sim_QA_k$.  Induction completes the
construction.
\end{proof}

We can now identify the clopen scale in $G_Q$.

\begin{prop}[Clopen scale]\label{prop-clopen-scale}
For every good geometric AEM,
\[
\{[\mathbf 1_A]:A\in \CO(X)\}
=
[0,u]_{G_Q}.
\]
More generally, if $B$ is clopen and
\[
0\leq g\leq[\mathbf 1_B]
\]
in $G_Q$, then there is a clopen $A\subset B$ such that
\[
g=[\mathbf 1_A].
\]
\end{prop}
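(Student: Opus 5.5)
The first statement is the case $B=X$ of the second, and the inclusion $\{[\mathbf 1_A]\}\subseteq[0,u]$ is immediate: every nonzero $\mathbf 1_A$ has strictly positive integral under each $\mu\in Q$ by full support, and the same holds for $\mathbf 1_{A^c}$. So the plan concentrates on the relative statement. Fix $0\le g\le[\mathbf 1_B]$ in $G_Q$. If $g=0$ we take $A=\emptyset$, and if $g=[\mathbf 1_B]$ we take $A=B$. Otherwise, by the definition of $G_Q^+$, we have the strict pointwise inequalities $0<\widehat g<\widehat B$ on $Q$.

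Choose a representative $f\in C(X,\Z)$ with $g=[f]$ and write $f=f^+-f^-$. Decompose each part into level sets,
\[
f^+=\sum_{i=1}^r\mathbf 1_{A_i},\qquad f^-=\sum_{k=1}^t\mathbf 1_{D_k},
\]
so that $\widehat g=\sum_i\widehat{A_i}-\sum_k\widehat{D_k}$. The strict inequality $\widehat g<\widehat B$ then reads
\[
\sum_i\widehat{A_i}<\widehat B+\sum_k\widehat{D_k},
\]
and $0<\widehat g$ reads $\sum_k\widehat{D_k}<\sum_i\widehat{A_i}$.

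The aim is to convert the formal difference into a single clopen set inside $B$. The natural tool is Proposition~\ref{prop-stable-comparison}, which turns strict pointwise comparison into order in $G_Q^J$. Applied to $\sum_k\widehat{D_k}<\sum_i\widehat{A_i}$, it produces a nonnegative $h\in C(X,\Z)$ with $[f^+]_{J_Q}-[f^-]_{J_Q}=[h]_{J_Q}$. Projecting by $q_J$ gives $g=[h]_{N_Q}$, so we may assume from the start that $f\ge0$, i.e.\ $\widehat g=\sum_{i=1}^m\widehat{C_i}$ with $C_i=\{h\ge i\}$ clopen. The remaining hypothesis is then $\sum_i\widehat{C_i}<\widehat B$ pointwise on $Q$. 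This is exactly the hypothesis of Lemma~\ref{lem-finite-packing}, which yields pairwise disjoint $C_i'\subset B$ with $C_i'\sim_Q C_i$. Put $A=\bigsqcup_i C_i'$. Then $\widehat A=\sum_i\widehat{C_i}=\widehat g$, so $[\mathbf 1_A]=g$ in $G_Q$, and $A\subset B$ as required.

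The main obstacle is the first reduction, from an arbitrary integer-valued representative to a nonnegative one. That reduction rests entirely on Proposition~\ref{prop-stable-comparison}, and that proposition in turn needs the relative density lemma and Riesz interpolation in $\Aff(Q)$ to split sums. One point needs checking: that proposition asks for strict inequality, and we have it only because $g\ne0$. The degenerate cases $g=0$ and $g=[\mathbf 1_B]$ were handled separately at the start for exactly this reason, and properness (full support) is what makes every nonzero element of $G_Q^+$ strictly positive everywhere on $Q$. The packing step is then routine.
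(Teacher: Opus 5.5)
Your proof is correct and follows the paper's argument. You use finite-sum strict comparison in $G_Q^J$ to replace the representative by a nonnegative $h$, and then apply finite packing; the one difference is that you pack the level sets of $h$ directly into $B$, whereas the paper packs them into $X$ to get the global statement and then uses one more application of the subset condition to move the resulting set inside $B$.
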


\begin{proof}
Only the inclusion $[0,u]\subseteq\{[\mathbf 1_A]:A\in \CO(X)\}$ needs
proof.  Let $0<g<u$ and choose $f\in C(X,\Z)$ representing $g$.  Write
$f=f^+-f^-$.
Since $\widehat f>0$ on $Q$,
$\widehat{f^-}<\widehat{f^+}$.

If $f^-=0$, then $[f]_{J_Q}\geq0$ by definition.  Otherwise, express the
nonnegative integer-valued functions $f^-$ and $f^+$ as finite sums of
characteristic functions of clopen sets.  Proposition~\ref{prop-stable-comparison}
then gives
\[
[f^-]_{J_Q}\leq[f^+]_{J_Q}.
\]
Thus in all cases $[f]_{J_Q}\geq0$ in $G_Q^J$.  By the definition of the
positive cone, there exists $h\in C(X,\Z)$ with $h\geq0$ such that
\[
[f]_{J_Q}=[h]_{J_Q}.
\]
Because $J_Q\subseteq N_Q$,
$\widehat h=\widehat f$.
Moreover, $g<u$ in $G_Q$, and therefore
$\widehat h=\widehat f<1
\quad\text{pointwise on }Q$.

Write
$h=\sum_{i=1}^r\mathbf 1_{A_i}$
with clopen $A_i$.  Then
\[
\sum_{i=1}^r\widehat{A_i}<\widehat X.
\]
By Lemma~\ref{lem-finite-packing}, there are pairwise disjoint clopen sets
$A_i'\subset X$ with $A_i'\sim_QA_i$.  Put
$A=\bigsqcup_{i=1}^rA_i'$.
Then
\[
[h]_{J_Q}
=
\sum_i[\mathbf 1_{A_i}]_{J_Q}
=
\sum_i[\mathbf 1_{A_i'}]_{J_Q}
=
[\mathbf 1_A]_{J_Q}.
\]
Applying $q_J$ gives
$g=[\mathbf 1_A] \ \text{in }G_Q$.
The endpoints $0$ and $u$ are represented by $\emptyset$ and $X$.

For the relative assertion, the cases $g=0$ and
$g=[\mathbf 1_B]$ are trivial.  Otherwise the global assertion gives a
clopen $C$ with $g=[\mathbf 1_C]$, and
$\widehat C<\widehat B$.
The subset condition gives a clopen $A\subset B$ with $A\sim_QC$, hence
$[\mathbf 1_A]=g$.
\end{proof}

\begin{cor}[Identification of the relation subgroup]\label{cor-JQ-NQ}
For every good geometric AEM,
\[
J_Q=N_Q.
\]
Consequently,
\[
G_Q^J=C(X,\Z)/J_Q
\quad\text{and}\quad
G_Q=C(X,\Z)/N_Q
\]
are canonically the same ordered group with order unit.
\end{cor}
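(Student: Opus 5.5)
Since $J_Q\subseteq N_Q$ holds trivially (each generator $\mathbf 1_A-\mathbf 1_B$ with $A\sim_Q B$ integrates to zero against every $\mu\in Q$), only the inclusion $N_Q\subseteq J_Q$ needs proof. The plan is to take $f\in N_Q$ and show $[f]_{J_Q}=0$ in $G_Q^J$, reusing the argument in the proof of Proposition~\ref{prop-clopen-scale}. That argument shows more than it states: for any $f$ with $\widehat f>0$ on $Q$, we have $[f]_{J_Q}\geq 0$ in $G_Q^J$, since we can write $f=f^+-f^-$ with $\widehat{f^-}<\widehat{f^+}$ and apply Proposition~\ref{prop-stable-comparison}.

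For $f\in N_Q$ we have $\widehat f=0$, so strict positivity fails, and I would perturb. Fix a nonempty proper clopen set $E$. I claim that $f+\mathbf 1_E$ and $-f+\mathbf 1_E$ both have strictly positive profile, because their profiles both equal $\widehat E>0$ by properness. Hence $[f]_{J_Q}+[\mathbf 1_E]_{J_Q}\geq 0$ and $-[f]_{J_Q}+[\mathbf 1_E]_{J_Q}\geq 0$. This alone only gives $-[\mathbf 1_E]\leq[f]\leq[\mathbf 1_E]$, which is not yet zero. To sharpen it, I would use infinitesimally small $E$. For each $n$, the Boolean Rokhlin lemma (Proposition~\ref{prop-uniform-R} applied to $X$) gives $n$ pairwise disjoint $Q$-equivalent sets $B_1,\dots,B_n$. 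Since the $B_i$ are $Q$-equivalent, the definition of $J_Q$ gives $n[\mathbf 1_{B_1}]_{J_Q}=[\mathbf 1_{\bigsqcup B_i}]_{J_Q}\leq u_J$. With $E=B_1$ this yields $-u_J\leq n[f]_{J_Q}\leq u_J$ for all $n$, after multiplying $\pm[f]\leq[\mathbf 1_{B_1}]$ by $n$.

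So $[f]_{J_Q}$ is an infinitesimal of $G_Q^J$. The main obstacle is concluding that it is zero, since $G_Q^J$ is not yet known to be free of infinitesimals. I would avoid this by working directly with positivity rather than infinitesimals. From $[f]_{J_Q}+[\mathbf 1_E]_{J_Q}\geq0$ we get a nonnegative $h\in C(X,\Z)$ with $[f+\mathbf 1_E]_{J_Q}=[h]_{J_Q}$ and $\widehat h=\widehat E$. Write $h=\sum_i\mathbf 1_{A_i}$; the profile identity gives $\sum_i\widehat{A_i}=\widehat E$. By Lemma~\ref{lem-quasi}(2), extended to finite sums, we can partition $E=\bigsqcup E_i$ with $E_i\sim_Q A_i$. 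Then $[h]_{J_Q}=\sum_i[\mathbf 1_{E_i}]_{J_Q}=[\mathbf 1_E]_{J_Q}$, where equality holds on the nose in $J_Q$ because each $\mathbf 1_{A_i}-\mathbf 1_{E_i}\in J_Q$. Hence $[f]_{J_Q}=0$, and $N_Q\subseteq J_Q$ follows.

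The delicate point is the use of Lemma~\ref{lem-quasi}(2), which requires the summands to be nonempty and proper. This is handled by discarding empty $A_i$, and by noting that no $A_i$ equals $X$ since $\widehat{A_i}\leq\widehat E<1$. With that, the perturbation by $E$ suffices and the Rokhlin step is unnecessary. Finally, the identification of $G_Q^J$ and $G_Q$ as ordered groups with unit follows because $q_J$ is then a bijection that carries $u_J$ to $u$. It is positive by Lemma~\ref{lem-GJ-order}, and its inverse is positive by the same strict-comparison argument: a strictly positive $g$ has $[f]_{J_Q}\geq0$ for any representative $f$.
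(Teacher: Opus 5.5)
Your argument is correct, and it takes a genuinely different and shorter route than the paper's.

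\textbf{The paper's route.} The paper writes $f^+=\sum_i\mathbf 1_{A_i}$ and $f^-=\sum_j\mathbf 1_{B_j}$ and applies Riesz decomposition in $G_Q$. This requires the dimension-group structure of Theorem~\ref{thm-GQ-dimension-group}, and yields positive $g_{ij}$ with $[\mathbf 1_{A_i}]=\sum_j g_{ij}$ and $[\mathbf 1_{B_j}]=\sum_i g_{ij}$. The relative clopen scale of Proposition~\ref{prop-clopen-scale} then realizes these as clopen partitions $A_i=\bigsqcup_j C_{ij}$ and $B_j=\bigsqcup_i D_{ij}$ with $C_{ij}\sim_Q D_{ij}$. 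Hence $f=\sum_{i,j}(\mathbf 1_{C_{ij}}-\mathbf 1_{D_{ij}})$.

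\textbf{Your route.} Replacing $f$ by $f+\mathbf 1_E$ avoids any common refinement of two arbitrary sums. Once $[f+\mathbf 1_E]_{J_Q}=[h]_{J_Q}$ with $h\geq 0$, the function $h$ has the profile of a \emph{single} clopen set. The subset condition alone, iterated as in Lemma~\ref{lem-quasi}(2), then splits $E$ into pieces $E_i\sim_Q A_i$. The hypotheses you check are the right ones: the level sets $A_i$ are nonempty, and $A_i\neq X$ because $\widehat{A_i}\le\widehat E<1$. Note also that $h\neq 0$, since $\widehat h=\widehat E>0$. You therefore need only Proposition~\ref{prop-stable-comparison}, through the positivity argument from the proof of Proposition~\ref{prop-clopen-scale}, which the paper itself reuses for the order part. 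You do not need interpolation in $G_Q$ or the clopen-scale theorem.

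\textbf{Comparison.} Both routes ultimately rest on Proposition~\ref{prop-stable-comparison}, and hence on Riesz interpolation in $\Aff(Q)$ and Lemma~\ref{lem-relative-density}. So neither weakens the hypotheses; yours simply uses fewer of the intermediate results. The paper's route gives a more symmetric and explicit conclusion: every $f\in N_Q$ is a sum of elementary relations between sub-pieces of the level sets of $f^+$ and $f^-$. That refinement statement is of independent interest, and it presents the corollary as a consequence of the dimension-group structure already built.

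The infinitesimal and Rokhlin paragraph of your proposal is, as you observe, unnecessary and can be dropped. Your treatment of the order-unit identification via $q_J$ coincides with the paper's.
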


\begin{proof}
We already know that $J_Q\subseteq N_Q$.  To prove the reverse inclusion, let
$f\in N_Q$.  The case $f=0$ is trivial.  Write
\[
f=f^+-f^-.
\]
By full support, both $f^+$ and $f^-$ are nonzero.  Since they are
nonnegative integer-valued locally constant functions, there are nonempty
clopen sets $A_1,\ldots,A_r$ and $B_1,\ldots,B_s$ such that
\[
f^+=\sum_{i=1}^r\mathbf 1_{A_i},
\qquad
f^-=\sum_{j=1}^s\mathbf 1_{B_j}.
\]
Because $f\in N_Q$, we have
\[
\sum_{i=1}^r[\mathbf 1_{A_i}]
=
\sum_{j=1}^s[\mathbf 1_{B_j}]
\qquad\text{in }G_Q.
\]
By Theorem~\ref{thm-GQ-dimension-group}, $G_Q$ is a dimension group and
therefore has the Riesz decomposition property.

Repeated application of
Riesz decomposition gives positive elements
\[
g_{ij}\in G_Q^+,
\qquad
1\leq i\leq r,
\quad
1\leq j\leq s,
\]
such that
\[
[\mathbf 1_{A_i}]=\sum_{j=1}^s g_{ij}
\quad(1\leq i\leq r),
\qquad
[\mathbf 1_{B_j}]=\sum_{i=1}^r g_{ij}
\quad(1\leq j\leq s).
\]

We now use the relative assertion of Proposition~\ref{prop-clopen-scale}.
For each fixed $i$, apply it successively inside the successive remainders of
$A_i$.  This gives a clopen partition
\[
A_i=\bigsqcup_{j=1}^s C_{ij}
\]
such that
\[
[\mathbf 1_{C_{ij}}]=g_{ij}
\qquad(1\leq j\leq s).
\]
Likewise, for each fixed $j$, we obtain a clopen partition
\[
B_j=\bigsqcup_{i=1}^r D_{ij}
\]
with
$[\mathbf 1_{D_{ij}}]=g_{ij}
\ (1\leq i\leq r)$.
Hence
$[\mathbf 1_{C_{ij}}]=[\mathbf 1_{D_{ij}}]
\ \text{in }G_Q$.
By the definition of $G_Q$, this is equivalent to
\[
\widehat{C_{ij}}=\widehat{D_{ij}},
\]
so $C_{ij}\sim_QD_{ij}$.  Therefore
\[
\mathbf 1_{C_{ij}}-\mathbf 1_{D_{ij}}\in J_Q
\]
for every $i,j$.  Since the above decompositions are clopen partitions,
\[
\begin{aligned}
f
&=\sum_{i=1}^r\mathbf 1_{A_i}
  -\sum_{j=1}^s\mathbf 1_{B_j}\\
&=\sum_{i=1}^r\sum_{j=1}^s
  \bigl(\mathbf 1_{C_{ij}}-\mathbf 1_{D_{ij}}\bigr)
\in J_Q.
\end{aligned}
\]
Thus $N_Q\subseteq J_Q$, and hence $J_Q=N_Q$.

It remains only to compare the orders.  Lemma~\ref{lem-GJ-order} shows that
the canonical group isomorphism $q_J:G_Q^J\to G_Q$ is positive.  Conversely,
let $0<g\in G_Q^+$ and choose $f\in C(X,\Z)$ representing $g$.  Since
$\widehat f>0$, the same stable-comparison argument used in the proof of
Proposition~\ref{prop-clopen-scale} gives
$
[f]_{J_Q}\geq0$.
Thus, $q_J^{-1}$ is positive as well.  The order units correspond by
construction, so $q_J$ is an order-unit isomorphism.
\end{proof}

In particular, one may equivalently write
\[
G_Q\cong C(X,\Z)/J_Q;
\]
that is, the full kernel of affine evaluation is generated by the elementary
relations coming from pairs of clopen sets with the same $Q$-evaluation profile.

\section{Infinitesimals and Orbit Equivalence}

In classical Bratteli-Vershik theory, the dimension group $K_0(X, T)$ classifies Strong Orbit Equivalence. It possesses an infinitesimal subgroup, defined as the kernel of all traces:
$$\Inf(K_0(X, T)) = \{g \in K_0(X, T) : \mu(g) = 0 \text{ for all } \mu \in M_T(X)\}.$$

The following proposition identifies the AEM dimension group $G_Q$ with the
classical dimension group modulo its infinitesimal subgroup, rendering it the algebraic invariant for Orbit Equivalence. 

\begin{prop}\label{prop:k-theory-quotient}
Let $(X, T)$ be a Cantor minimal system and let $Q = M_T(X)$ be the corresponding dynamical simplex. Let $K_0(X, T)$ be the classical dimension group of the system, and $G_Q$ the AEM dimension group. Then there is a canonical order-isomorphism:
$$G_Q \cong K_0(X, T)/\Inf(K_0(X, T)).$$
\end{prop}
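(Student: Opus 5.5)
We use the standard description $K_0(X,T)=C(X,\Z)/\{f-f\circ T^{-1}: f\in C(X,\Z)\}$, ordered by the image of the nonnegative functions, with order unit $[\mathbf 1_X]$. Equivalently, $K_0(X,T)$ is the quotient by the coboundary subgroup $\partial_T=\{g-g\circ T^{-1}\}$. The plan is to compare the two quotient maps from $C(X,\Z)$ and show that their kernels are nested.

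\emph{Step 1: $\partial_T\subseteq N_Q$.} Every coboundary $g-g\circ T^{-1}$ integrates to zero against each $\mu\in Q=M_T(X)$, by $T$-invariance. Hence the identity of $C(X,\Z)$ induces a surjective homomorphism $\pi:K_0(X,T)\to G_Q$, $[f]_T\mapsto[f]_{N_Q}$, with $\pi[\mathbf 1_X]_T=u$.

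\emph{Step 2: the kernel of $\pi$.} We have $\ker\pi=N_Q/\partial_T$. This consists of those classes $[f]_T$ with $\int f\,d\mu=0$ for all $\mu\in M_T(X)$. We use the classical fact (Herman--Putnam--Skau; see also Giordano--Putnam--Skau) that the states of $K_0(X,T)$ normalized at $[\mathbf 1_X]$ are exactly the maps $[f]_T\mapsto\int f\,d\mu$ with $\mu\in M_T(X)$. With this fact, $\ker\pi$ is precisely $\Inf(K_0(X,T))$ as defined above, so $\pi$ descends to a group isomorphism $\bar\pi:K_0(X,T)/\Inf(K_0(X,T))\to G_Q$.

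\emph{Step 3: order.} We give the quotient the order induced by the standard convention for a simple dimension group modulo infinitesimals. An element is positive iff it is $0$ or it has a representative that is strictly positive under every state, i.e. $\int f\,d\mu>0$ for all $\mu\in M_T(X)$. This coincides with the image of $K_0^+$ modulo $\Inf$. To see this, recall that in a simple dimension group an element $g$ with all states strictly positive is itself positive (an order unit). Conversely, a positive $[f]_T$ that is not infinitesimal has a representative $f\ge0$, $f\ne0$, so by full support of the invariant measures of a minimal system all its states are strictly positive. Thus positive classes in the quotient correspond exactly to $G_Q^+$, the strict pointwise cone. Hence $\bar\pi$ and its inverse are positive, and $\bar\pi$ preserves order units. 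Canonicity is clear, because $\bar\pi$ is induced by the identity on $C(X,\Z)$.

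The main obstacle is Step 3, specifically the claim that strict positivity on all states forces positivity in $K_0(X,T)$. This is where one needs the structure theory of $K_0(X,T)$, namely that it is a simple dimension group whose state space is $M_T(X)$, which we take from HPS. Alternatively, one can bypass it using the paper's own results: Step 1 gives $N_Q\supseteq\partial_T$, and Corollary~\ref{cor-JQ-NQ} together with Proposition~\ref{prop-clopen-scale} controls the positive cone of $G_Q$ directly. We will present the HPS route, and remark that the state-space identification is the only external input.
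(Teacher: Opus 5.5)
Your proof is correct and follows the paper's own argument: the coboundary description of $K_0(X,T)$, the inclusion of coboundaries in $N_Q$, the identification of $N_Q/\partial_T$ with $\Inf(K_0(X,T))$, and the third isomorphism theorem. Your Step 3 also justifies the matching of positive cones, which the paper only asserts. It does so using simplicity of $K_0(X,T)$, whose positive cone is determined by strict positivity under all states, together with full support of the $T$-invariant measures.
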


\begin{proof}
By standard results in Cantor dynamics, the $K_0$-group of the system is canonically isomorphic to the quotient of the continuous integer-valued functions by the subgroup of coboundaries:
\[
K_0(X,T) \cong \frac{C(X, \mathbb{Z})}{B_T}, \quad \text{where} \quad B_T = \{f - f \circ T^{-1} : f \in C(X, \mathbb{Z})\}.
\]
Because every measure $\mu \in Q$ is $T$-invariant, the integral of any coboundary with respect to $\mu$ is zero. Therefore, $B_T \subseteq N_Q$, where $N_Q = \{f \in C(X, \mathbb{Z}) : \int f \, d\mu = 0 \text{ for all } \mu \in Q\}$.

The subgroup of infinitesimals in $K_0(X,T)$ with respect to the state space $Q$ consists exactly of those classes whose evaluations vanish on $Q$. Under the isomorphism above, this subgroup corresponds to the quotient $N_Q / B_T$.

Applying the Third Isomorphism Theorem for groups, we can quotient out the infinitesimals:
\[
\frac{K_0(X,T)}{\Inf(K_0(X,T))} \cong \frac{C(X, \mathbb{Z}) / B_T}{N_Q / B_T} \cong \frac{C(X, \mathbb{Z})}{N_Q}.
\]
By definition of our AEM dimension group, we have:
\[
G_Q = \frac{C(X, \mathbb{Z})}{N_Q}.
\]
Since the canonical positive cones and order units match under these identifications, this algebraic isomorphism is in fact an order-isomorphism, completing the proof.
\end{proof}

\section{Realization of the System}\label{sec-realization}

As was shown in \cite[Proposition 2.4]{Gl-02} two good measures $\mu$ and $\nu$ are equivalent iff $S(\mu) = S(\nu)$, where $S(\mu) = \{\mu(A) : A \in \mathcal{O}^+\}$. We will need the following analogue for good Choquet simplices.

\begin{lem}[Clopen-Lifting Lemma]\label{lem:clopen_lifting}
Let $X$ and $Y$ be Cantor spaces, and let $Q \subset M_{fc}(X)$ and $K \subset M_{fc}(Y)$ be good Choquet simplices. Suppose there is an ordered group isomorphism $\Phi : G_Q \to G_K$ preserving the order units, meaning $\Phi([X]) = [Y]$. Then there exists a homeomorphism $h : X \to Y$ such that $[h(A)] = \Phi([A])$ for every clopen set $A \subset X$
and such that $h_*(Q) = K$.
\end{lem}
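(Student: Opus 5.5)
The homeomorphism will come from a back-and-forth construction between the Boolean algebras $\CO(X)$ and $\CO(Y)$, exactly as in the proof of Proposition~\ref{prop-homog}. The tool that makes each step possible is the relative clopen scale of Proposition~\ref{prop-clopen-scale}, applied in $G_Q$ and in $G_K$.

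We enumerate the clopen sets of $X$ as $U_1,U_2,\ldots$ and those of $Y$ as $V_1,V_2,\ldots$. We then build finite clopen partitions $\mathcal P_n$ of $X$ and $\mathcal R_n$ of $Y$, with bijections $\pi_n:\mathcal P_n\to\mathcal R_n$, such that:
\begin{itemize}
\item $\Phi([\mathbf 1_P])=[\mathbf 1_{\pi_n(P)}]$ for every $P\in\mathcal P_n$;
\item $\mathcal P_{n+1}$ refines $\mathcal P_n$, $\mathcal R_{n+1}$ refines $\mathcal R_n$, and the bijections are compatible with these refinements;
\item $U_1,\dots,U_n$ are unions of atoms of $\mathcal P_n$, and $V_1,\dots,V_n$ are unions of atoms of $\mathcal R_n$.
\end{itemize}
We start with $\mathcal P_0=\{X\}$, $\mathcal R_0=\{Y\}$, which is legitimate because $\Phi(u)=u$.

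For the forth step, suppose $P\in\mathcal P_n$ corresponds to $R=\pi_n(P)$, and $U_{n+1}$ splits $P$ as $P_1\sqcup P_2$. Since $\Phi$ is an order isomorphism, $0\le \Phi([\mathbf 1_{P_1}])\le \Phi([\mathbf 1_P])=[\mathbf 1_R]$ in $G_K$. The relative clopen scale (Proposition~\ref{prop-clopen-scale}, applied to $K$) then gives a clopen $R_1\subset R$ with $[\mathbf 1_{R_1}]=\Phi([\mathbf 1_{P_1}])$. Setting $R_2=R\setminus R_1$, additivity gives $[\mathbf 1_{R_2}]=\Phi([\mathbf 1_{P_2}])$. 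Empty pieces cause no trouble: if $\Phi([\mathbf 1_{P_1}])=0$ then $R_1$ can be taken empty, and if $R_1=\emptyset$ then $\widehat{P_1}=0$, so $P_1=\emptyset$ by full support. The back step is symmetric, using $\Phi^{-1}$ and the clopen scale in $G_Q$.

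In the limit, the union of the correspondences defines a Boolean algebra isomorphism $\theta:\CO(X)\to\CO(Y)$ with $[\mathbf 1_{\theta(A)}]=\Phi([\mathbf 1_A])$ for all clopen $A$. By Stone duality, $\theta$ is induced by a homeomorphism $h:X\to Y$ with $h(A)=\theta(A)$.

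It remains to show $h_*(Q)=K$. Since $\Phi$ is an order-unit isomorphism, it induces an affine homeomorphism $\Phi^*:S(G_K,u)\to S(G_Q,u)$. Under the canonical identifications of Theorem~\ref{thm-GQ-dimension-group}, $\Phi^*$ sends $\kappa\in K$ to a unique $\mu\in Q$ with $\mu(A)=\kappa([\Phi[\mathbf 1_A]])=\kappa(h(A))$ for every clopen $A$. Because measures on a Cantor space are determined by their values on clopen sets, this says $\mu=h^{-1}_*\kappa$, so $h_*\mu=\kappa$. Hence $K\subseteq h_*(Q)$. Running the same argument with $\Phi^{-1}$ and $h^{-1}$ gives the reverse inclusion.

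The main obstacle is the splitting step. It requires the precise statement that every element of $[0,[\mathbf 1_R]]$ in $G_K$ is represented by a clopen subset of $R$, which is exactly the relative part of Proposition~\ref{prop-clopen-scale}; that proposition in turn rests on $J=N$ and finite-sum comparison, and uses goodness in an essential way. Everything after that is routine bookkeeping, plus the observation that the state-space identification turns $\Phi$ into pushforward by $h$.
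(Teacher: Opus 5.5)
Your proof is correct and follows the paper's route. You run a back-and-forth between $\CO(X)$ and $\CO(Y)$, match each splitting using the relative clopen-scale property of Proposition~\ref{prop-clopen-scale}, and finish with Stone duality; your state-pullback argument for $h_*(Q)=K$ is the one the paper gives in the proof of Theorem~\ref{thm-realization}, since the paper's proof of the lemma itself stops once $h$ is constructed, so including it here makes your proof complete.
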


\begin{proof}
By Proposition~\ref{prop-clopen-scale}, an order-unit isomorphism carries
clopen classes to clopen classes, and the relative form of that proposition
allows such a class to be represented inside any prescribed clopen upper
bound.

We now repeat the Boolean back-and-forth construction of
Proposition~\ref{prop-homog}.  Enumerate the clopen subsets of $X$ and $Y$.
Inductively construct finite clopen partitions $\mathcal P_n$ of $X$ and
$\mathcal Q_n$ of $Y$, together with bijections between their atoms, such that
corresponding atoms $P\in\mathcal P_n$ and $Q_0\in\mathcal Q_n$ satisfy
\[
[\mathbf 1_{Q_0}]=\Phi([\mathbf 1_P]).
\]
Suppose in a forth step that an atom $P$ is split as
\[
P=P_0\sqcup P_1,
\]
and let $Q_0$ be the corresponding atom.  Then
\[
0\leq\Phi([\mathbf 1_{P_0}])\leq[\mathbf 1_{Q_0}].
\]
By the relative clopen-scale property in $G_K$, there is a clopen
$Q_0'\subset Q_0$ such that
\[
[\mathbf 1_{Q_0'}]=\Phi([\mathbf 1_{P_0}]).
\]
Putting $Q_1'=Q_0\setminus Q_0'$ gives
\[
[\mathbf 1_{Q_1'}]
=[\mathbf 1_{Q_0}]-[\mathbf 1_{Q_0'}]
=\Phi([\mathbf 1_{P_1}]).
\]
Thus every splitting on the $X$ side can be matched by a clopen splitting on
the $Y$ side.  The back step is identical, using $\Phi^{-1}$.

At the limit we obtain a Boolean algebra isomorphism
\[
\Psi : \CO(X)\longrightarrow \CO(Y)
\]
with
\[
[\mathbf 1_{\Psi(A)}]=\Phi([\mathbf 1_A])
\qquad(A\in \CO(X)).
\]
By Stone duality, $\Psi$ is induced by a homeomorphism $h:X\to Y$, and hence
\[
[h(A)]=\Phi([A])
\]
for every clopen $A\subset X$.
\end{proof}

\begin{thm}\label{thm-good-dynamical}\label{thm-realization}
Let $Q \subset M_{fc}(X)$ be a Choquet simplex of full-support probability measures. 
If the geometric AEM generated by $Q$ satisfies the subset condition (i.e., is a good AEM), then there exists a minimal homeomorphism $T \in \Homeo(X)$ such that $Q = M_T(X)$.
\end{thm}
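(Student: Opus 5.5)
The plan is to pass from $Q$ to the dimension group $G_Q$ and back, using the clopen-lifting lemma as the bridge. By Theorem~\ref{thm-GQ-dimension-group}, $(G_Q,G_Q^+,u)$ is a simple dimension group, and its normalized state space is canonically $Q$. A simple dimension group is not isomorphic to $\Z$ here, since its state space consists of atomless measures and so the values $\widehat A$ can be made arbitrarily small. The Herman--Putnam--Skau realization theorem \cite{HPS-92} then gives a properly ordered Bratteli diagram, and hence a Cantor minimal Bratteli--Vershik system $(Y,S)$, with $K_0(Y,S)\cong (G_Q,G_Q^+,u)$ as ordered groups with unit. The isomorphism sends $[\mathbf 1_Y]$ to $u$.

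Next I identify $G_K$ for $K=M_S(Y)$. The states of $K_0(Y,S)$ are the invariant measures, so $K$ is affinely homeomorphic to $S(G_Q,u)\cong Q$. Since $G_Q$ has no infinitesimals (the order is the strict pointwise order on $V_Q$), $\Inf(K_0(Y,S))=0$. Proposition~\ref{prop:k-theory-quotient} therefore gives an order-unit isomorphism $G_K\cong K_0(Y,S)\cong G_Q$. By Theorem~\ref{thm-GW95}, $K$ is good, and $K\subset M_{fc}(Y)$ because $S$ is minimal and $Y$ is infinite. Composing yields an order-unit isomorphism $\Phi:G_Q\to G_K$. This step needs care: the identification $K_0/\Inf\cong G_K$ must be compatible with the HPS isomorphism, which amounts to checking that both send $[\mathbf 1_A]$ to the class of its evaluation profile.

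Now apply the Clopen-Lifting Lemma~\ref{lem:clopen_lifting} to obtain a homeomorphism $h:X\to Y$ with $[h(A)]=\Phi([A])$ for every clopen $A$, and set $T=h^{-1}Sh$. Then $T$ is a minimal homeomorphism of $X$, and $M_T(X)=(h^{-1})_*K$. It remains to show $(h^{-1})_*K=Q$. This is the main point, and the proof of the lemma only states it, so I would verify it directly.

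For $\nu\in K$, the functional $A\mapsto\nu(h(A))$ equals $s_\nu(\Phi([\mathbf 1_A]))$, where $s_\nu$ is a state on $G_K$. Thus $s_\nu\circ\Phi$ is a normalized state on $G_Q$. By the state-space part of Theorem~\ref{thm-GQ-dimension-group}, it equals $s_\mu$ for a unique $\mu\in Q$. So $\mu(A)=\nu(h(A))$ for all clopen $A$, and therefore $\mu=(h^{-1})_*\nu$, since measures are determined by their values on clopen sets. This gives $(h^{-1})_*K\subseteq Q$. The reverse inclusion comes from the same argument with $\Phi^{-1}$, which produces for each $\mu\in Q$ a $\nu\in K$ with $s_\nu=s_\mu\circ\Phi^{-1}$. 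Hence $M_T(X)=Q$.
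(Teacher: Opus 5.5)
Your proposal is correct and follows the paper's proof essentially step for step. Both arguments realize $G_Q$ by HPS, use the absence of infinitesimals and Proposition~\ref{prop:k-theory-quotient} to get an order-unit isomorphism $\Phi:G_Q\to G_K$, apply the Clopen-Lifting Lemma, verify $(h^{-1})_*K=Q$ by pulling back states, and conjugate $S$ by $h$. You also make explicit two points the paper leaves implicit, namely that $G_Q\not\cong\Z$ (needed for HPS) and that $K$ is good (needed for the lifting lemma); your compatibility worry is unnecessary, since the lifting lemma only needs $\Phi$ to be some order-unit isomorphism.
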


\begin{proof}
By Theorem~\ref{thm-GQ-dimension-group}, $(G_Q,u)$ is a simple dimension group
with state space $Q$.  By the HPS realization theorem \cite{HPS-92}, there is
a Cantor minimal Bratteli--Vershik system $(X_B,T_B)$ and an order-unit
isomorphism
\[
\theta:G_Q\longrightarrow K_0(X_B,T_B).
\]
Let
\[
Q_B=M_{T_B}(X_B).
\]
Since $G_Q$ is identified with a subgroup of $\Aff(Q)$, it has no nonzero
infinitesimals.  Hence neither does $K_0(X_B,T_B)$.  By Proposition~\ref{prop:k-theory-quotient}, the quotient
identification
\[
G_{Q_B}\cong K_0(X_B,T_B)/\Inf(K_0(X_B,T_B))
\]
therefore gives an order-unit isomorphism
\[
\Phi:G_Q\longrightarrow G_{Q_B}.
\]

Apply Lemma~\ref{lem:clopen_lifting} to $\Phi$.  We obtain a homeomorphism
\[
h:X\longrightarrow X_B
\]
such that
\[
[\mathbf 1_{h(A)}]=\Phi([\mathbf 1_A])
\qquad(A\in \CO(X)).
\]
We claim that
$h_*Q=Q_B$.

Indeed, if $\nu\in Q_B$, then the state $s_\nu$ on $G_{Q_B}$ pulls back via
$\Phi$ to a state on $G_Q$, hence by Theorem~\ref{thm-GQ-dimension-group}
there is a unique $\mu\in Q$ such that
\[
s_\nu\circ\Phi=s_\mu.
\]
For every clopen $A\subset X$,
\[
\nu(h(A))
=s_\nu([\mathbf 1_{h(A)}])
=s_\nu(\Phi([\mathbf 1_A]))
=\mu(A),
\]
so $h^{-1}_*\nu=\mu\in Q$.  Applying the same argument to $\Phi^{-1}$ gives
the reverse inclusion, and therefore $h_*Q=Q_B$.

Finally set
\[
T=h^{-1}\circ T_B\circ h.
\]
Then $T$ is minimal and
\[
M_T(X)=h^{-1}_*Q_B=Q.
\]
\end{proof}

As a consequence of this isomorphism, the Bratteli-Vershik system $(X_B, T_B)$ constructed from $G_Q$ in Theorem \ref{thm-realization} represents the canonical Orbit Equivalence class of $(X, T)$, as established by Giordano, Putnam, and Skau \cite{GPS-95}.

In particular, this generalizes Akin's theorem: if $Q = \{\mu\}$ is a singleton and $\mu$ satisfies the subset condition, then $\mu$ is the unique invariant measure for a minimal homeomorphism $T$.

\section{Fair Measures and Rigidity}\label{sec-fair}

In light of the realization theorem (Theorem \ref{thm-realization}), it is natural to ask whether the condition of being the invariant measure for a minimal homeomorphism can be intrinsically characterized for a single measure. We formalize this property as follows.

\begin{defn}
Let $X$ be a Cantor space. A probability measure $\mu \in M_{fc}(X)$ is called \emph{fair} if there exists a minimal homeomorphism $T \in \Homeo(X)$ such that $\mu \in M_T(X)$.
\end{defn}

By virtue of our previous results, the property of being fair can be tautologically restated in terms of good Choquet simplices.

\begin{prop}
A measure $\mu \in M_{fc}(X)$ is fair if and only if there exists a good Choquet simplex $Q \subset M_{fc}(X)$ such that $\mu \in Q$.
\end{prop}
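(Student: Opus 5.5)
The plan is to prove the two implications separately. Each one rests on a single earlier result: Theorem~\ref{thm-GW95} for the forward direction and Theorem~\ref{thm-realization} for the reverse.

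For the forward direction, suppose $\mu$ is fair. Choose a minimal $T\in\Homeo(X)$ with $\mu\in M_T(X)$, and put $Q=M_T(X)$. We then check the following.
\begin{itemize}
\item $Q$ is a Choquet simplex. This is standard for the simplex of invariant measures of a homeomorphism.
\item $Q\subset M_{fc}(X)$. By minimality, every invariant measure has full support: the support of an invariant measure is a closed invariant set, so it must be all of $X$. Every invariant measure is also atomless, because an atom at $x$ would give equal positive mass to the infinitely many distinct points $T^nx$. Distinctness holds because a minimal system on the infinite space $X$ has no periodic points.
\item $Q$ is good. This is exactly Theorem~\ref{thm-GW95}: if $\widehat A<\widehat B$, there is $h\in FG(X,T)$ with $A'=h(A)\subset B$ and $\widehat{A'}=\widehat A$.
\end{itemize}
The last point needs one small remark. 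Theorem~\ref{thm-GW95} is stated for clopen sets, and the subset condition quantifies over $\mathcal O^+$. The degenerate cases are trivial: $A=\emptyset$ is handled by $A'=\emptyset$, and $B=X$ is handled by $A'=A$. The case $A=X$ cannot occur under a strict inequality. Hence $\mu\in Q$ with $Q$ a good Choquet simplex in $M_{fc}(X)$.

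For the reverse direction, let $Q\subset M_{fc}(X)$ be a good Choquet simplex containing $\mu$. Measures in $M_{fc}(X)$ have full support by definition, so the AEM is proper and satisfies the hypotheses of Theorem~\ref{thm-realization}. That theorem yields a minimal $T$ with $M_T(X)=Q\ni\mu$, so $\mu$ is fair.

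There is no real obstacle, since the statement is essentially a repackaging of these two theorems. The only point needing care is confirming in the forward direction that $M_T(X)$ sits inside $M_{fc}(X)$, so that the hypotheses of the equivalence are met. The argument for this is the minimality argument given in the second bullet above.
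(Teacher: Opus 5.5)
Your proposal is correct and follows essentially the same route as the paper. The forward direction takes $Q=M_T(X)$, notes that it is a Choquet simplex of atomless full-support measures, and gets goodness from the Glasner--Weiss comparison result (Theorem~\ref{thm-GW95}); the reverse direction is a direct application of Theorem~\ref{thm-realization}, and your explicit checks of full support, atomlessness, and the degenerate cases simply spell out what the paper attributes to the general theory of minimal Cantor systems.
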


\begin{proof}
If $\mu$ is fair, there is a minimal homeomorphism $T \in \Homeo(X)$ fixing $\mu$. By the general theory of minimal systems on Cantor spaces, the space of invariant measures $M_T(X)$ is a Choquet simplex $Q$ of full-support, atomless probability measures, and $\mu \in Q$, and by \cite{GW-95} $Q$ is good. 

Conversely, if such a good simplex $Q$ exists with $\mu \in Q$, Theorem \ref{thm-realization} provides a minimal homeomorphism $T \in \Homeo(X)$ such that $M_T(X) = Q$. Hence, $\mu$ is $T$-invariant, meaning $\mu$ is fair.
\end{proof}

Given this proposition, one might hope for a direct, purely measure-theoretic condition on $\mu$ itself that determines fairness, bypassing the need to identify an ambient simplex $Q$. However, such an intrinsic characterization is highly unlikely to exist. The symmetries required to support a minimal action are rare; for a typical measure, the group of measure-preserving homeomorphisms is in fact trivial.

\begin{thm}
For a generic measure $\mu \in M_{fc}(X)$ (in the sense of Baire category), the stabilizer group 
$$ \Homeo(X, \mu) = \{ h \in \Homeo(X) :  h_*\mu = \mu \} $$
is trivial. In particular, a generic measure is not fair.
\end{thm}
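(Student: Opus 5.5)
We would prove that the set $\mathcal T=\{\mu\in M_{fc}(X):\Homeo(X,\mu)=\{\mathrm{id}\}\}$ is a dense $G_\delta$ in $M_{fc}(X)$. Here $M_{fc}(X)$ is itself a dense $G_\delta$ in the Polish space $M(X)$, so it is Polish.

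The plan is to reduce triviality of the stabilizer to a countable family of clopen conditions. Say that $\mu$ is \emph{clopen-separating} if $\mu(A)\neq\mu(B)$ for all distinct clopen sets $A\neq B$. Any $h\in\Homeo(X,\mu)$ with $h\neq\mathrm{id}$ moves some point $x$. By zero-dimensionality we can then choose a clopen $A\ni x$ with $h(A)\cap A=\emptyset$. In particular $h(A)\neq A$, while $\mu(h(A))=\mu(A)$. So every clopen-separating $\mu$ has trivial stabilizer. Note that $A\neq B$ is equivalent to $A\setminus B$ and $B\setminus A$ not both empty, and $\mu(A)=\mu(B)$ is equivalent to $\mu(A\setminus B)=\mu(B\setminus A)$.

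It therefore suffices to show the following. For each of the countably many pairs of disjoint clopen sets $(C,D)$, not both empty, the set
\[
U_{C,D}=\{\mu\in M_{fc}(X):\mu(C)\neq\mu(D)\}
\]
is open and dense. Openness is immediate, since $\mu\mapsto\mu(C)-\mu(D)$ is weak$^*$ continuous because clopen indicators are continuous.

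Density is the only real step. Given $\mu\in M_{fc}(X)$ with $\mu(C)=\mu(D)$, we perturb it. Without loss of generality $C\neq\emptyset$; since $\mu$ has full support, $\mu(C)>0$. Set
\[
\mu_t=(1-t)\mu+t\,\mu(\cdot\mid C).
\]
This measure is still atomless with full support, and
\[
\mu_t(C)-\mu_t(D)=t\,(1-\mu(D\mid C))=t>0,
\]
because $C\cap D=\emptyset$. As $t\to0$ we have $\mu_t\to\mu$, which gives density. The Baire category theorem then shows that $\bigcap_{C,D}U_{C,D}$, the set of clopen-separating measures, is a dense $G_\delta$, and all of its members have trivial stabilizer.

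For the final assertion: if $\mu$ were fair with minimal $T$, then $T\in\Homeo(X,\mu)$ and $T\neq\mathrm{id}$, since a minimal homeomorphism of an infinite space is not the identity. Hence generic measures are not fair.

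The main obstacle is the reduction lemma, specifically producing a clopen $A$ with $h(A)\cap A=\emptyset$. For this we take a clopen $A\ni x$ small enough that $A\cap h(A)=\emptyset$, which is possible because $h(x)\neq x$ and $h$ is continuous. The remaining steps are routine.
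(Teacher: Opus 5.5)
Your proof is correct and follows essentially the same route as the paper. Both apply Baire category to the countably many closed slices $\{\nu:\nu(A)=\nu(B)\}$, show each is nowhere dense by pushing extra mass onto a nonempty clopen piece of the symmetric difference, and conclude that a measure separating distinct clopen sets has trivial stabilizer. Your explicit perturbation $\mu_t=(1-t)\mu+t\,\mu(\cdot\mid C)$ makes precise the paper's ``shift a small mass into $C$'' step, and your remark that a minimal homeomorphism is not the identity supplies the fairness conclusion the paper leaves implicit.
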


\begin{proof}
The space $M_{fc}(X)$ of atomless full-support probability measures on the Cantor space $X$, equipped with the weak$^*$ topology, is completely metrizable and therefore a Baire space. 

For any pair of distinct clopen sets $A, B \subset X$ (meaning their symmetric difference is non-empty), consider the set of measures that evaluate them equally:
$$ E_{A,B} = \{\nu \in M_{fc}(X) : \nu(A) = \nu(B)\}. $$
Because the evaluation maps $\nu \mapsto \nu(A)$ are continuous and affine, $E_{A,B}$ is a relatively closed affine slice of $M_{fc}(X)$. 

To see that this slice has empty interior, let $\nu \in E_{A,B}$. Since $A \neq B$, we may assume without loss of generality that there is a non-empty clopen subset $C \subset A \setminus B$. Because $\nu$ has full support, $\nu(C) > 0$. We can perturb $\nu$ to a new measure $\nu' \in M_{fc}(X)$ by shifting a small mass $\delta > 0$ from $X \setminus C$ into $C$. This perturbation yields $\nu'(A) = \nu(A) + \delta$ while $\nu'(B) \le \nu(B)$, pulling the measure strictly off the slice $E_{A,B}$. 

Therefore, every slice $E_{A,B}$ is nowhere dense. By the Baire Category Theorem, the countable union $\bigcup_{A \neq B} E_{A,B}$ over all pairs of distinct clopen sets is meager. Its complement is a dense $G_\delta$ set, meaning that a generic measure in $M_{fc}(X)$ strictly separates all distinct clopen sets.

If such a measure $\mu$ is preserved by a homeomorphism $h$, then
$\mu(h(A))=\mu(A)$ for every clopen $A$.  Since distinct clopen sets have
distinct $\mu$-measure, it follows that $h(A)=A$ for every clopen $A$.
Hence $h$ is the identity.  Thus the stabilizer of a generic measure is
trivial.
\end{proof}

See \cite{Akin-99} for explicite examples of rigid measures.

The rigidity of generic measures illustrates why an intrinsic condition for fairness is elusive. It underscores that fairness is not a local metric property of $\mu$, but rather depends on the measure being embeddable into a larger, highly structured dynamical simplex. 

Even if we isolate a fair measure, it may not possess the geometric ``goodness'' of its ambient system. One might mistakenly assume that if $\mu$ is fair, the singleton simplex $\{\mu\}$ must itself satisfy the subset condition. As we will explicitly construct in the next section (see Example \ref{ex-Q2}), a measure can be fair, yet fail the subset condition when viewed in isolation.

\section{Minimality of the Stabilizer Group and Good Simplices}
\label{sec-stabilizer}

In Theorem \ref{thm-realization}, we established that if
$Q\subset M_{fc}(X)$ is a good Choquet simplex, then there exists a
minimal homeomorphism $T\in\Homeo(X)$ such that
$Q=M_T(X)$.
It is natural to ask whether a converse may hold in terms of the full
stabilizer group
\[
\mathcal H_Q
=
\left\{
h\in\Homeo(X):
h_*\mu=\mu\text{ for every }\mu\in Q
\right\}.
\]
For a good simplex $Q$, Proposition \ref{prop-homog} and
Lemma \ref{lem-minimal} show directly that $\mathcal H_Q$ acts minimally
on $X$.  Does the converse hold?

Before addressing this question, it is useful to separate three properties
which are related but distinct: goodness of a measure, fairness, and
ergodicity for a given dynamical system.  Akin's Bernoulli example gives a
particularly instructive illustration.

\begin{exa}[Akin's Bernoulli example]\label{ex-Akin-Bernoulli}
In \cite[Theorem 2.18]{Akin-05}, Akin considers the Bernoulli measures
\[
\mu=\beta(1/3,2/3)
\qquad\text{and}\qquad
\nu=\beta(1/3,1/3,1/3)
\]
on the corresponding Bernoulli Cantor spaces.  He shows that
$S(\mu)=S(\nu)$,
and that this common clopen-values set is group-like, while $\nu$ is good
and $\mu$ is not good.  Moreover, Akin shows that the full group
\[
\mathcal H_\mu
=
\{h\in\Homeo(X):h_*\mu=\mu\}
\]
has a fixed point.

It follows immediately that $\mu$ is not fair: if a minimal homeomorphism
$T$ preserved $\mu$, then $T\in\mathcal H_\mu$, contradicting the existence
of a common fixed point for $\mathcal H_\mu$.  On the other hand, $\mu$ is
ergodic for the Bernoulli shift.  Thus ergodicity for a natural
measure-preserving transformation, and even the group-like property of
$S(\mu)$, do not imply fairness or goodness.
\end{exa}

As formulated, the stabilizer question also has a negative answer.  In fact,
even a singleton $Q=\{\mu\}$ may have a minimally acting stabilizer group while
failing the subset condition.  The following two elementary examples use
simple dimension groups.  For a systematic treatment of Akin's notion of
goodness in terms of traces on dimension groups, and for many related
examples and characterizations, see Bezuglyi and Handelman \cite{BH-14}.

The following two examples provide a
negative answer to Akin's question \cite[Theorem 2.15 and the following remark]{Akin-05}. 

\begin{exa}\label{ex-Q2}
Let
\[
G=\mathbb Q^2,
\qquad
G^+
=
\{0\}\cup
\{(x,y):x>0,\ y>0\},
\qquad
u=(1,1).
\]
This is a simple dimension group with order unit $u$.  By the HPS
realization theorem, there exists a Cantor minimal system $(X,T)$ such
that
\[
(K_0(X,T),K_0(X,T)^+,[\mathbf 1_X])
\cong
(G,G^+,u).
\]

The normalized state space of $G$ is parametrized by $t\in[0,1]$:
\[
\tau_t(x,y)=tx+(1-t)y.
\]
Choose an irrational $0<t<\frac12$.
Under the usual identification of states of $K_0(X,T)$ with
$T$-invariant probability measures, $\tau_t$ determines an invariant
measure on $X$.  Since $T$ is minimal, this measure has full support,
and since it is preserved by a minimal homeomorphism, it is fair.

Consider
\[
a=\left(\frac45,\frac15\right),
\qquad
b=\left(\frac15,\frac45\right).
\]
Both $a$ and $b$ belong to the order interval $[0,u]$.  By the
standard dimension-range property for Cantor minimal systems, there
exist clopen sets $A,B\subset X$ such that
\[
[\mathbf 1_A]=a,
\qquad
[\mathbf 1_B]=b
\]
in $K_0(X,T)$.

Now
\[
\tau_t(a)
=
\frac{1+3t}{5},
\qquad
\tau_t(b)
=
\frac{4-3t}{5}.
\]
Since $t<1/2$,
\[
\tau_t(A)<\tau_t(B).
\]

Suppose that the singleton
$Q=\{\tau_t\}$
were good.  
By the subset condition there would exist a clopen set
$A'\subset B$ such that
$\tau_t(A')=\tau_t(A)$.
Let
$a'=[\mathbf 1_{A'}]\in G$.
Since $A'\subset B$, $a'\leq b$.
On the other hand,
$\tau_t(a')=\tau_t(a)$.
Because $t$ is irrational, the homomorphism
$\tau_t:\mathbb Q^2\longrightarrow\mathbb R$
is injective.  Hence $a'=a$.

It follows that $a\leq b$, so
\[
b-a
=
\left(-\frac35,\frac35\right)
\]
would belong to $G^+$.  This is impossible.  Therefore the singleton
$\{\tau_t\}$ does not satisfy the subset condition.

Thus $\tau_t$ is a fair measure which is not good when regarded as a
singleton simplex.
\end{exa}

The measure in Example \ref{ex-Q2} is not ergodic for the minimal
homeomorphism $T$, since $\tau_t$ is not an extreme point of the state
space when $0<t<1$.  One may therefore ask whether an \emph{ergodic} fair
measure must be good.  

Bezuglyi and Handelman \cite{BH-14} showed that even an ergodic invariant measure of a minimal Cantor system need not be good, thereby answering Akin's question negatively. We give below a particularly elementary dimension-group example illustrating this phenomenon.


\begin{exa}[An ergodic fair measure which is not good]
\label{ex-ergodic-not-good}
Choose numbers
\[
0<\alpha<\frac12<\beta<1
\]
with $\alpha$ irrational.  Let
\[
G=\mathbb Q^2,
\qquad
u=(1,1),
\]
and, for $t\in[\alpha,\beta]$, put
\[
\tau_t(x,y)=tx+(1-t)y.
\]
Define
\[
G^+
=
\{0\}
\cup
\left\{
g\in\mathbb Q^2:
\tau_t(g)>0
\text{ for every }t\in[\alpha,\beta]
\right\}.
\]

We first note that $(G,G^+,u)$ is a simple dimension group.  Indeed,
consider the injective map
\[
\iota:G\longrightarrow\mathbb R^2,
\qquad
\iota(g)=\bigl(\tau_\alpha(g),\tau_\beta(g)\bigr).
\]
Since $\alpha\neq\beta$, this map is induced by an invertible real
linear transformation, and hence $\iota(G)$ is dense in
$\mathbb R^2$.  Moreover,
\[
G^+\setminus\{0\}
=
\iota^{-1}\bigl((0,\infty)^2\bigr).
\]
It follows immediately that the group is unperforated and that every
nonzero positive element is an order unit.

To verify interpolation, let
\[
x_1,x_2\leq y_1,y_2.
\]
If one of these inequalities is an equality, the common element is
an interpolant.  Otherwise all four inequalities are strict, and the
open rectangle
\[
\left(
\max_i\tau_\alpha(x_i),
\min_j\tau_\alpha(y_j)
\right)
\times
\left(
\max_i\tau_\beta(x_i),
\min_j\tau_\beta(y_j)
\right)
\]
is nonempty.  Since $\iota(G)$ is dense in $\mathbb R^2$, it contains
a point of this rectangle, which gives an interpolant in $G$.

The normalized state space of $(G,u)$ is precisely
$\{\tau_t:t\in[\alpha,\beta]\}$.
Indeed, after applying $\iota$, every normalized positive
homomorphism is a convex combination of the two coordinate
functionals $\tau_\alpha$ and $\tau_\beta$.

By the HPS realization theorem, there is therefore a Cantor minimal
system $(X,T)$ whose ordered $K_0$-group is $(G,G^+,u)$ and whose
simplex of invariant measures is affinely homeomorphic to the interval
$[\alpha,\beta]$.

The endpoint state $\tau_\alpha$ is an extreme point of this simplex.
Consequently, the corresponding $T$-invariant probability measure is
ergodic.  It is also fair, since it is preserved by the minimal
homeomorphism $T$.

We claim that the singleton
$Q=\{\tau_\alpha\}$ is nevertheless not good.
Consider
\[
a=\left(\frac35,\frac25\right),
\qquad
b=\left(\frac25,\frac35\right).
\]
Notice that $a+b=u$.
In particular, $a,b\in[0,u]$.  By the dimension-range property,
choose clopen sets $A,B\subset X$ such that
\[
[\mathbf 1_A]=a,
\qquad
[\mathbf 1_B]=b.
\]
We have
\[
\tau_\alpha(a)=\frac{2+\alpha}{5},
\qquad
\tau_\alpha(b)=\frac{3-\alpha}{5},
\]
and hence, since $\alpha<1/2$,
$\tau_\alpha(A)<\tau_\alpha(B)$.

Suppose that $\{\tau_\alpha\}$ satisfied the subset condition.  Then
there would exist a clopen set $A'\subset B$ such that
$\tau_\alpha(A')=\tau_\alpha(A)$.
Let
$a'=[\mathbf 1_{A'}]\in G$.
Because $\alpha$ is irrational, $\tau_\alpha$ is injective on
$\mathbb Q^2$.  Thus, $a'=a$.

Since $A'\subset B$, we would have $a\leq b$.
But
\[
b-a
=
\left(-\frac15,\frac15\right),
\]
and
\[
\tau_\beta(b-a)
=
\frac{1-2\beta}{5}<0,
\]
because $\beta>1/2$.  Hence
\[
b-a\notin G^+,
\]
contradicting $a\leq b$.

Therefore $\{\tau_\alpha\}$ is not a good simplex.  We have thus
obtained an ergodic fair measure which fails the subset condition when
viewed as a singleton.
\end{exa}

The three examples may be summarized as follows:
\[
\begin{array}{c|c|c|c}
\text{measure} & \text{ergodicity in the displayed system}
& \text{fair} & \text{good as a singleton} \\ \hline
\beta(1/3,2/3)
& \text{ergodic for the Bernoulli shift}
& \text{no} & \text{no} \\
\tau_t,\ 0<t<1/2
& \text{not ergodic for the minimal }T
& \text{yes} & \text{no} \\
\tau_\alpha
& \text{ergodic for the minimal }T
& \text{yes} & \text{no}
\end{array}
\]
Thus ergodicity, fairness, and goodness are genuinely different notions.
In particular, Akin's example shows that ergodicity does not imply fairness,
while Example \ref{ex-ergodic-not-good} shows that even ergodicity together
with fairness does not imply goodness.

The two $\mathbb Q^2$ examples also answer the original stabilizer question.

\begin{prop}\label{prop:minimal-stabilizer-not-good}
There exists a singleton
\[
Q=\{\mu\}\subset M_{fc}(X)
\]
such that $\mathcal H_Q$ acts minimally on $X$, while $Q$ does not satisfy
the subset condition.  The measure $\mu$ may moreover be chosen to be
ergodic for a Cantor minimal homeomorphism.
\end{prop}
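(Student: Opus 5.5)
The plan is to take $\mu=\tau_\alpha$ from Example~\ref{ex-ergodic-not-good}; Example~\ref{ex-Q2} would work equally well if ergodicity is not required. Let $(X,T)$ be the Cantor minimal system given there, with $K_0(X,T)\cong(G,G^+,u)$. Let $\mu$ be the $T$-invariant measure corresponding to the extreme state $\tau_\alpha$, and put $Q=\{\mu\}$. Several properties are already established in that example: $\mu$ is ergodic, has full support because $T$ is minimal, and is atomless because $T$ is aperiodic. The example also shows that $Q$ fails the subset condition. So the only thing left to prove is that $\mathcal H_Q=\{h\in\Homeo(X):h_*\mu=\mu\}$ acts minimally on $X$.

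For minimality I will not invoke Lemma~\ref{lem-minimal}, since it relies on goodness. Instead I use the inclusion $T\in\mathcal H_Q$, which holds because $\mu$ is $T$-invariant. For any $x\in X$, the $\mathcal H_Q$-orbit of $x$ then contains the $T$-orbit of $x$. The $T$-orbit is dense because $T$ is minimal. Hence every $\mathcal H_Q$-orbit is dense, and $(X,\mathcal H_Q)$ is minimal.

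The argument is short, so the only real care needed is in the non-goodness step of Example~\ref{ex-ergodic-not-good}, which I take as given. That step uses three facts. First, $\tau_\alpha$ is injective on $\mathbb Q^2$, since $\alpha$ is irrational. Second, classes of clopen sets fill $[0,u]$ in $K_0(X,T)$. Third, $A'\subset B$ implies $[\mathbf 1_{A'}]\le[\mathbf 1_B]$ in $K_0(X,T)$.

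The two examples differ only in the extra ergodicity clause. With Example~\ref{ex-Q2} the measure $\tau_t$ is not ergodic, but the same reasoning gives the first assertion of the proposition. With Example~\ref{ex-ergodic-not-good}, extremality of $\tau_\alpha$ in the state space, which is identified with $M_T(X)$, gives ergodicity and hence the full statement.
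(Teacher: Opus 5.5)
Your proposal is correct and follows the paper's proof. You take $Q=\{\tau_\alpha\}$ from Example~\ref{ex-ergodic-not-good}, note that the minimal homeomorphism $T$ lies in $\mathcal H_Q$, so every $\mathcal H_Q$-orbit contains a dense $T$-orbit, and cite the example for the failure of the subset condition. You are also right that Lemma~\ref{lem-minimal} cannot be used here, since it presupposes goodness.
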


\begin{proof}
Let $Q=\{\tau_\alpha\}$
be the singleton from Example \ref{ex-ergodic-not-good}.  The minimal
homeomorphism $T$ constructed there preserves $\tau_\alpha$, and hence
$T\in\mathcal H_Q$.
Since every $T$-orbit is dense, every $\mathcal H_Q$-orbit is dense as well.
Thus $\mathcal H_Q$ acts minimally on $X$.

On the other hand, Example \ref{ex-ergodic-not-good} shows that $Q$
does not satisfy the subset condition.
\end{proof}

The preceding proposition shows that minimality of $\mathcal H_Q$ alone is
far too weak to characterize good simplices.  It does not, however,
determine the full simplex of $\mathcal H_Q$-invariant measures.  This
suggests a stronger question.

For a subgroup $H\leq\Homeo(X)$, write
\[
M_H(X)
=
\left\{
\nu\in M(X):
h_*\nu=\nu\text{ for every }h\in H
\right\}.
\]

\begin{question}\label{qu-stabilizer-good}
Suppose that $Q\subset M_{fc}(X)$ is a Choquet simplex such that
$\mathcal H_Q$ acts minimally and
$M_{\mathcal H_Q}(X)=Q$.
Must $Q$ satisfy the subset condition?
\end{question}

There is a useful reformulation of this question in terms of group
actions.  Suppose that a group $\Gamma$ acts minimally on $X$ and put
$Q=M_\Gamma(X)$.
Then, every $\gamma\in\Gamma$ preserves every measure in $Q$, and
therefore
$\Gamma\subseteq\mathcal H_Q$.
It follows that
\[
M_{\mathcal H_Q}(X)\subseteq M_\Gamma(X)=Q.
\]
The reverse inclusion follows from the definition of $\mathcal H_Q$, and
hence
$M_{\mathcal H_Q}(X)=Q$.

Thus, a negative answer to Question \ref{qu-stabilizer-good} would follow
from any minimal action whose full invariant-measure simplex fails the
subset condition.

The recent examples of Boldrini and Prasad \cite{BP-26} are relevant to
this question.  They construct topologically free minimal Cantor actions
without dynamical comparison, including examples possessing invariant
probability measures.  However, failure of dynamical comparison does not
by itself imply failure of the subset condition.  Dynamical comparison
requires the relevant clopen pieces to be moved by elements of the acting
group, whereas the subset condition only asks for the existence of a
clopen subset having the prescribed evaluation profile.  Thus the examples
of \cite{BP-26} do not, by themselves, answer Question
\ref{qu-stabilizer-good}.

It would be interesting to determine whether the invariant-measure simplex
in one of those examples fails the subset condition, or, more generally,
whether there exists a minimal group action $\Gamma\curvearrowright X$ for
which $Q=M_\Gamma(X)$ is not good.

This question is particularly natural for amenable groups, since amenability
guarantees the existence of invariant probability measures.  We therefore
conclude by examining what the preceding AEM theory says for minimal
amenable Cantor actions.

\section{Amenable Cantor actions and the AEM framework}

Let $\alpha:G\curvearrowright X$ be a minimal continuous action of a countable group on a Cantor space, and
write $Q=M_G(X)$ for the set of $G$-invariant Borel probability measures.

We first separate three different levels of input from the preceding theory:
\begin{enumerate}
\item facts depending only on the pair $(X,Q)$;
\item facts depending on the additional hypothesis that $Q$ is good;
\item facts which use HPS, GPS, or the $K_0$-theory of a minimal
      $\Z$-action.
\end{enumerate}
Only the third level is intrinsically $\Z$-dynamical.

\subsection{The invariant simplex}

\begin{prop}\label{prop:Q-proper}
Assume that $G$ is countable and amenable and that
$\alpha:G\curvearrowright X$ is minimal.  Then $Q=M_G(X)$ is a nonempty compact
metrizable Choquet simplex, and every $\mu\in Q$ is atomless and has full
support.  In particular,
\[
Q\subset M_{fc}(X),
\]
and $Q$ defines a proper geometric AEM in the sense introduced above.
\end{prop}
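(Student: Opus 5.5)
The proof splits into four standard facts, which I will establish in order: nonemptiness, compactness and convexity, the simplex property, and then full support and atomlessness. The last two are where minimality and the Cantor structure of $X$ enter.

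\emph{Nonemptiness.} Since $G$ is countable and amenable, it has a F\o lner sequence $(F_n)$. Fix any $\nu\in M(X)$ and form the averages
\[
\nu_n=|F_n|^{-1}\sum_{g\in F_n}g_*\nu .
\]
Any weak$^*$ limit point of $(\nu_n)$ is $G$-invariant, and limit points exist because $M(X)$ is compact metrizable. Hence $Q\neq\emptyset$.

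\emph{Compactness and convexity.} Invariance is the condition $\int\varphi\circ g\,d\mu=\int\varphi\,d\mu$ for all $\varphi\in C(X)$ and $g\in G$. Each such condition is closed and affine, so $Q$ is a closed convex subset of the compact metrizable space $M(X)$.

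\emph{Simplex property.} This is the classical fact that the invariant measures of a continuous action of a countable group (amenability is not even needed here) form a Choquet simplex. One route is to cite the standard result (Phelps, or Glasner's book). Another is to argue directly: the ergodic decomposition gives each $\mu\in Q$ a representing measure on the ergodic measures, and this representation is unique because distinct ergodic measures are mutually singular. I will cite rather than reprove this.

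\emph{Full support.} For $\mu\in Q$, the support $\operatorname{supp}\mu$ is a nonempty closed set. It is $G$-invariant because each $g$ preserves $\mu$ and is a homeomorphism. Minimality of $\alpha$ then forces $\operatorname{supp}\mu=X$.

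\emph{Atomlessness.} Suppose $\mu(\{x\})=c>0$. Invariance gives $\mu(\{gx\})=c$ for every $g\in G$, so the orbit $Gx$ must be finite, since otherwise $\mu$ would have infinite mass. A finite orbit is closed and invariant, so by minimality $X=Gx$ would be finite. This contradicts $X$ being a Cantor space.

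Combining these facts gives $Q\subset M_{fc}(X)$, and $Q$ is a compact Choquet simplex. Properness in the sense of the definition is exactly the statement that every $\mu\in Q$ has full support.

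The main potential obstacle is the simplex property, which is the only step that is not a few-line argument. I expect to handle it by citation. If that is not available, I would use the Choquet--Meyer characterization instead: show that $\operatorname{span}_{\R}Q$, with the cone $\R_+Q$, is a lattice. The key input is that the Jordan parts of a $G$-invariant signed measure are again $G$-invariant, which follows from the uniqueness of the Jordan decomposition. This is the same argument as in Corollary~\ref{cor-HQ-jordan}.
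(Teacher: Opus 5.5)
Your proof is correct and follows the paper's route: amenability gives nonemptiness, $Q$ is closed and convex, the simplex property is cited from the ergodic decomposition, minimality gives full support, and an infinite orbit of equal-mass atoms gives atomlessness. You add detail the paper leaves implicit, namely the F{\o}lner averaging argument and the observation that a finite orbit would be a closed invariant set forcing $X$ to be finite.
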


\begin{proof}
Amenability gives the existence of a $G$-invariant probability measure on
the compact $G$-space $X$.  Compactness and convexity of $Q$ are immediate,
and the standard ergodic decomposition theorem for countable group actions
identifies $Q$ as a Choquet simplex.

If $\mu\in Q$, then $\operatorname{supp}(\mu)$ is a nonempty closed
$G$-invariant subset of $X$.  Minimality therefore gives
$\operatorname{supp}(\mu)=X$.

Finally, if $\mu$ had an atom $x$ of mass $a>0$, then every point of the
$G$-orbit of $x$ would also have mass $a$.  Minimality on a Cantor space
forces this orbit to be infinite, contradicting finiteness of $\mu$.
Hence every $\mu\in Q$ is atomless.
\end{proof}

\begin{rmk}
Freeness is not needed in Proposition~\ref{prop:Q-proper}.  Amenability is
used only to guarantee that $Q$ is nonempty.  If a nonamenable group acts
minimally and nevertheless admits invariant probability measures, the same
AEM discussion applies to its nonempty invariant simplex.
\end{rmk}

Thus every minimal amenable Cantor action canonically produces a proper AEM
\[
A\longmapsto \widehat A,\qquad
\widehat A(\mu)=\mu(A),\quad \mu\in Q.
\]
What is \emph{not} automatic is the subset condition.

\begin{defn}
We say that the invariant simplex $M_G(X)$ of $\alpha$ is \emph{good} if
$\widehat A<\widehat B$
for clopen $A,B\subset X$ implies that there exists a clopen
$A'\subset B$ such that
$\widehat{A'}=\widehat A$.
Equivalently, $Q=M_G(X)$ is a good AEM in the terminology introduced above.
\end{defn}

\subsection{The purely AEM-theoretic conclusions}

Assume now that $Q=M_G(X)$ is good.  Then every conclusion proved
from the Boolean algebra of clopen sets, the compact simplex $Q$, and the
subset condition remains valid \emph{without any change}.  In particular:

\begin{enumerate}
\item The relation
\[
A\sim_Q B
\quad\Longleftrightarrow\quad
\widehat A=\widehat B
\]
has the Boolean refinement and decomposition properties of
Lemma~\ref{lem-quasi}.

\item Boolean homogeneity and minimality of the full stabilizer remain
available: Proposition~\ref{prop-homog} gives, for $A\sim_QB$, an element
$h\in\mathcal H_Q$ with $h(A)=B$, and Lemma~\ref{lem-minimal} shows that
$\mathcal H_Q$ acts minimally on $X$.

\item Moreover, Proposition~\ref{prop-HQ-measures} gives
$M_{\mathcal H_Q}(X)=Q$.
Thus, enlarging the original action to the full $Q$-stabilizer introduces no
new invariant probability measures.

\item The Boolean Rokhlin lemma, Proposition~\ref{prop-uniform-R}, remains
valid.  It constructs many mutually $Q$-equivalent clopen sets with
uniformly small $Q$-remainder.  This should not be confused with the
construction of F{\o}lner towers for the original $G$-action.

\item By Theorem~\ref{thm-GQ-dimension-group}, the ordered group
\[
G_Q=C(X,\Z)/N_Q,
\qquad
N_Q=
\left\{
f\in C(X,\Z):
\int f\,d\mu=0\text{ for all }\mu\in Q
\right\},
\]
with the strict order introduced above, is a simple dimension group whose
normalized state space is canonically $Q$.

\item The clopen-scale theorem, Proposition~\ref{prop-clopen-scale}, gives
\[
[0,u]_{G_Q}
=
\{[\mathbf 1_A]:A\in\CO(X)\}.
\]

\item Finally, Corollary~\ref{cor-JQ-NQ} gives
\[
J_Q=
\left\langle
\mathbf 1_A-\mathbf 1_B:A\sim_QB
\right\rangle
=N_Q.
\]
Thus every integer-valued relation invisible to all invariant measures is
generated by elementary relations between clopen sets having identical
$Q$-profiles.
\end{enumerate}

None of these statements involves the orbit structure of the original
action $\alpha$.
There is nevertheless one elementary connection with the action:
$\alpha(G)\subseteq\mathcal H_Q$.
Indeed, every $\alpha(g)$ preserves each measure in $Q$.  This inclusion is
important, but it is generally very far from equality.

\subsection{Where HPS and GPS enter}

The realization theorem, Theorem~\ref{thm-realization}, shows that every good AEM is dynamically realizable: there
exists a minimal homeomorphism
$T:X\longrightarrow X$ such that $M_T(X)=Q$.
The proof uses the Herman--Putnam--Skau realization theorem
\cite{HPS-92}.

When $Q$ arose originally as $M_G(X)$ for an amenable group action, the
homeomorphism $T$ is an \emph{auxiliary} $\Z$-action constructed from the
AEM.  There is no reason for the $T$-orbits to agree with the $G$-orbits.

Likewise, the use above of the Giordano--Putnam--Skau theory
\cite{GPS-95}, the ordered group $K_0(X,T)$, and the infinitesimal subgroup
of $K_0(X,T)$ concern the auxiliary minimal $\Z$-system $(X,T)$ (or an
original system when the acting group actually is $\Z$).  These conclusions
can not be interpreted as orbit-equivalence or $K$-theoretic statements
about the original $G$-action.

In particular, the identification
\[
G_Q\cong K_0(X,T)/\operatorname{Inf}(K_0(X,T))
\]
is a statement about a minimal homeomorphism $T$ with invariant simplex
$Q$, not a canonical identification of $G_Q$ with the dynamical
coinvariants or type semigroup of an arbitrary amenable action.

\section{Goodness and \texorpdfstring{$\mathbb Z$}{Z}-realizability of invariant measures}

The realization theorem has a useful interpretation for general
amenable Cantor actions.

\begin{prop}\label{prop:Z-realizable}
Let $\alpha:G\curvearrowright X$ be a minimal action of a countable amenable
group and put $Q=M_G(X)$.  The following are equivalent:
\begin{enumerate}
\item $Q$ is a good AEM;
\item there exists a minimal homeomorphism $T:X\to X$ such that
\[
M_T(X)=M_G(X).
\]
\end{enumerate}
\end{prop}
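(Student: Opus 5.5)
The two implications rest on different earlier results, so I will treat them separately.

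For (1)$\Rightarrow$(2), I first invoke Proposition~\ref{prop:Q-proper}. It shows that $Q=M_G(X)$ is a nonempty compact metrizable Choquet simplex contained in $M_{fc}(X)$, and that the associated AEM is proper. Goodness of $Q$ is exactly the subset-condition hypothesis of Theorem~\ref{thm-realization}. That theorem then yields a minimal homeomorphism $T\in\Homeo(X)$ with $M_T(X)=Q=M_G(X)$. Nothing about the $G$-action is used beyond producing $Q$; as remarked in the previous section, $T$ is only an auxiliary $\Z$-action.

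For (2)$\Rightarrow$(1), suppose $T$ is minimal with $M_T(X)=M_G(X)=Q$. I apply Theorem~\ref{thm-GW95} to the Cantor minimal system $(X,T)$. The AEM is defined solely from the set $Q$, so the relation $\widehat A<\widehat B$ means the same thing whether $Q$ is regarded as $M_G(X)$ or as $M_T(X)$. Given clopen $A,B$ with $\widehat A<\widehat B$ pointwise on $M_T(X)$, Theorem~\ref{thm-GW95} provides $h\in FG(X,T)$ with $h(A)\subset B$. Put $A'=h(A)$. Elements of the full group preserve every $T$-invariant measure, so $\widehat{A'}=\widehat A$ on $Q$, and this is precisely the subset condition.

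The only point needing care is the degenerate clopen sets $A,B\in\{\emptyset,X\}$, since the subset condition is stated for all of $\mathcal O^+$. If $A=\emptyset$, take $A'=\emptyset$. If $B=X$, take $A'=A$. The case $A=X$ cannot occur, because $\widehat A<\widehat B$ would force $\mu(B)>1$. With these cases handled, the argument is a direct combination of earlier results, and I expect no real obstacle beyond checking that Theorem~\ref{thm-GW95} applies verbatim to the simplex $M_T(X)$.
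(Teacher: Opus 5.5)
Your proposal is correct and follows the paper's proof. For (1)$\Rightarrow$(2) you use Proposition~\ref{prop:Q-proper} together with Theorem~\ref{thm-realization}, and for (2)$\Rightarrow$(1) you apply the Glasner--Weiss comparison result (Theorem~\ref{thm-GW95}) to $M_T(X)=Q$, exactly as the paper does, with your separate treatment of the cases $A,B\in\{\emptyset,X\}$ being a harmless extra check the paper leaves implicit.
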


\begin{proof}
If $Q$ is good, Theorem~\ref{thm-realization} gives such a minimal
homeomorphism $T$.
Conversely, suppose that $M_T(X)=Q$ for a minimal homeomorphism $T$.  The
Glasner--Weiss comparison theorem for Cantor minimal $\Z$-systems
\cite{GW-95} says that whenever
\[
\mu(A)<\mu(B)
\qquad(\mu\in M_T(X)),
\]
there exists a clopen $A'\subset B$ with
\[
\mu(A')=\mu(A)
\qquad(\mu\in M_T(X)).
\]
Since $M_T(X)=Q$, this is exactly goodness of the AEM generated by $Q$.
\end{proof}

Melleray explicitly points out that the following weaker form of the
orbit-equivalence problem is open in general: given a minimal Cantor action
of a countable amenable group $G$, must there exist a minimal $\Z$-action with
the same invariant Borel probability measures?  He also notes that
dynamical comparison gives a positive answer \cite{Melleray-25}.

Proposition~\ref{prop:Z-realizable} therefore identifies this question
exactly with the following AEM question:

\begin{question}\label{q:all-good}
Must $M_G(X)$ be a good AEM for every minimal Cantor action of a countable
amenable group?
\end{question}

Even a positive answer to Question~\ref{q:all-good} would say only that the
measure simplex of the action can be realized by a minimal homeomorphism.
It would not identify the orbit structure of the two actions.

\end{document}